\renewcommand{\shortmid}{\prime}
\newcommand{\rrVert}{\Vert}
\newcommand{\rrvert}{\vert}
\newcommand{\llVert}{\Vert}
\newcommand{\llvert}{\vert}
\newcommand{\eqref}[1]{(\ref{#1})}
\newtheorem{theorem}{Theorem}
\newtheorem{corollary}[theorem]{Corollary}
\newtheorem{lemma}[theorem]{Lemma}
\newtheorem{proposition}[theorem]{Proposition}
\begin{document}
\begin{frontmatter}

%\dochead{}
\title{Sobolev differentiable stochastic flows for SDEs with singular
coefficients: Applications to the~transport equation}
\runtitle{Sobolev flows for SDEs with singular coefficients}

\begin{aug}
\author[A]{\fnms{Salah-Eldin A.} \snm{Mohammed}\corref{}\thanksref{T2}\ead[label=e1]{salah@sfde.math.siu.edu}\ead[label=u1,url]{http://sfde.math.siu.edu}},
\author[B]{\fnms{Torstein K.} \snm{Nilssen}\ead[label=e2]{torsteka@math.uio.no}}\\
\and
\author[B]{\fnms{Frank N.} \snm{Proske}\ead[label=e3]{proske@math.uio.no}}
\runauthor{S.-E. A. Mohammed, T. K. Nilssen and F. N. Proske}
\affiliation{Southern Illinois University, University of Oslo and
University of Oslo}
%\dedicated{}
\address[A]{S.-E. A. Mohammed\\
Department of Mathematics\\
Southern Illinois University\\
Carbondale, Illinois 62901\\
USA\\
\printead{e1}\\
\printead{u1}} %adresu isvedimo komanda gale!
\address[B]{T. Nilssen\\
F. Proske\\
Department of Mathematics\\
University of Oslo\\
Moltke Moes vei 35\\
P.O. Box 1053 Blindern\\
N-316 Oslo\\
Norway \\
\printead{e2} \\
\phantom{E-mail:\ }\printead*{e3}}
\end{aug}
\thankstext{T2}{Supported in part by NSF Grant DMS-07-05970 and by CMA,
Oslo, Norway.}

% HISTORY:
\received{\smonth{4} \syear{2013}}
\revised{\smonth{1} \syear{2014}}
%\accepted{\smonth{} \syear{}}

% ABSTRACT
%
\begin{abstract}
In this paper, we establish the existence of a stochastic flow of Sobolev
diffeomorphisms
\[
\mathbb{R}^{d}\ni x\quad\longmapsto\quad\phi_{s,t}(x)\in
\mathbb{R}^{d},\qquad s,t\in\mathbb{R}
\]
for a stochastic differential equation (SDE) of the form
\[
dX_{t}=b(t,X_{t}) \,dt+dB_{t},\qquad s, t\in\mathbb{R}, X_{s}=x\in\mathbb{R%
}^{d}.
\]
The above SDE is driven by a \textit{bounded measurable} drift
coefficient $%
b\dvtx\mathbb{R}\times\mathbb{R}^{d}\rightarrow\mathbb{R}^{d}$ and a
$d$-dimensional Brownian motion $B$. More specifically, we show that the
stochastic flow $\phi_{s,t}(\cdot)$ of the SDE lives in the space $%
L^{2}(\Omega;W^{1,p}(\mathbb{R}^{d},w))$ for all $s,t$ and all $p \in
(1, \infty)$, where
$W^{1,p}(\mathbb{R}^{d},w)$ denotes a weighted Sobolev space with
weight $w$
possessing a $p$th moment with respect to Lebesgue measure on $\mathbb
{R}%
^{d}$. From the viewpoint of stochastic (and deterministic) dynamical
systems, this is a striking result, since the dominant ``culture'' in
these dynamical systems is that the
flow ``inherits'' its spatial regularity from that of the driving
vector fields.

The spatial regularity of the stochastic flow yields existence and
uniqueness of a Sobolev differentiable weak solution of the
(Stratono\-vich) stochastic transport equation
\[
\cases{ \displaystyle d_{t}u(t,x)+\bigl(b(t,x)\cdot Du(t,x)\bigr)\,dt+
\sum_{i=1}^{d}e_{i}\cdot Du(t,x)
\circ dB_{t}^{i}=0,
\cr
u(0,x)=u_{0}(x),}
\]
where $b$ is \textit{bounded and measurable}, $u_{0}$ is $C_{b}^{1}$
and $%
\{e_{i}\}_{i=1}^{d}$ a basis for $\mathbb{R}^{d}$. It is well known
that the
deterministic counterpart of the above equation does not in general
have a solution.
\end{abstract}

% KEYWORDS
% Pirmas kwd is didziosios raides
%
\begin{keyword}[class=AMS]
%\kwd[Primary ]{}
\kwd{60H10}
\kwd{60H15}
\kwd{37H05}
\kwd{37H10}
\kwd{34A36}
%\kwd[; secondary ]{}
\end{keyword}
\begin{keyword}
\kwd{SDEs with measurable coefficients}
\kwd{stochastic flows}
\kwd{Malliavin derivatives}
\kwd{Sobolev spaces}
\kwd{stochastic transport equation}
\end{keyword}

\end{frontmatter}

\setcounter{footnote}{1}
%s1 #&#
\section{Introduction}\label{sec1}

\subsection*{An overview}
This article offers the following %unique and surprising
novel contributions to the existing theory of stochastic
%(and deterministic)
differential equations (SDEs):
\begin{itemize}
\item Well-posedness of the initial value problem for singular
%stochastic differential equations (
SDEs driven by \textit{bounded measurable} drift vector fields and
multidimensional Brownian motion. \textit{No regularity or even continuity
hypotheses are imposed on the drift vector fields.} Furthermore, under
these hypotheses, we construct a unique stochastic flow of Sobolev
diffeomorphisms for the singular SDE.

\item The Sobolev flow of the singular SDE is employed as stochastic
characteristics in order to generate a unique Sobolev differentiable
solution to the stochastic transport equation with a \textit{bounded
measurable} drift coefficient. It is well known that the associated
\textit{deterministic} transport equation is in general ill-posed even
with a differentiable drift; cf. \cite
{Ambrosio,DiPernaLions,FlandoliGubinelliPriola}.
%\item{} With new probabilistic apriori estimates through small noise
%perturbations, we obtain a unique flow of %diffeomorphisms for
%one-dimensional deterministic ODE's driven by monotone (and
%discontinuous) bounded vector %fields.
%\item{} As a corollary of the above result, we construct a stochastic
%flow of diffeomorphisms for one-%dimensional SDE's driven by monotone
%bounded (and discontinuous) {\it diffusion} coefficients. The
%existence %of the stochastic flow in this paradigm is
%counter-intuitive since (as far as we know) the underlying SDE may
%%not admit unique pathwise solutions in the first place. Indeed, as is
%well-known in the literature, the %existence of a stochastic flow is
%almost always a {\it direct consequence} of pathwise uniqueness of
%solutions %to the SDE.
\end{itemize}

%As indicated in the above summary,
From a dynamical systems perspective,
%point of view,
the above result on singular SDEs is striking since the predominant
intuition in the current literature on stochastic (and deterministic)
dynamical systems is that the flow ``inherits'' its spatial regularity
from the driving vector fields.
%dominant `culture' in the current literature on stochastic (and
%deterministic) dynamical systems is that the flow `inherits' its
%spatial regularity from the driving vector fields.
Indeed, in the stochastic setting, the %stochastic
flow is in general even a \textit{little rougher} in the space variable
than the driving vector fields (\mbox{\cite{Kunita,MohammedScheutzow}}). More
specifically, it follows from work by Kunita
(\cite{Kunita}, pp. 178--179) that a SDE with $C^{k, \delta}$
coefficients ($\delta\in(0,1]$)
%$C^{k+1, \varepsilon}$ diffusion coefficient and a $C^{k,
%\varepsilon}$ drift
generates a $C^{k, \varepsilon}$ stochastic flow with positive
$\varepsilon$
\textit{strictly less} than $\delta$.
% {\it less} than $\varepsilon$.
Here, the spatial $C^{k, \delta}$ regularity stands for $k$-times
differentiability with the $k$th Fr\'echet derivative $\delta$-H\"older
continuous.
%See [22], [28].

In contrast with its deterministic counterpart, the singular stochastic
transport equation with multiplicative noise is well-posed due to the
%%spatial
regularity of the stochastic characteristics and of their occupation
measure. The latter properties have the effect of
%fac the,  The intuition could be that the occupation measure of the
%solution
%process has a regularity which
``smoothing out'' the singularities of the drift coefficient. Needless
to say, such an effect is not available in the singular deterministic setting.

%In contrast with the singular deterministic transport equation, the
%wellposedness of the stochastic transport %equation is a consequence
%of the Sobolev stochastic flow of the associated singular SDE.

The approach in the article is probabilistic, employing ideas from the
Malliavin calculus coupled with new probabilistic estimates. In
particular, the arguments are centered around a key relative
compactness criteria for random variables developed by Nualart,
Malliavin and Da Prato. See the \hyperref[sec4]{Appendix}. The authors are not aware of any other scenarios whereby the
Malliavin calculus is employed to establish \textit{almost sure spatial}
regularity of stochastic flows for SDEs.

\subsection*{Background and statement of results}
In this article, we analyze the spatial regularity in the initial
condition $%
x \in\mathbb{R}^d$ %with respect to the intial condition $x$
for strong solutions $X_{\cdot}^{x}$ to the $d$-dimensional
%%stochastic differential equation (
SDE
%
%
%e1 #&#
\begin{equation}
X_{t}^{s,x}=x+\int_{s}^{t}b
\bigl(u,X_{u}^{s,x}\bigr)\,du+B_{t}-B_{s},\qquad
s,t\in\mathbb{R}. \label{SDE}
\end{equation}
%
%with respect to the initial condition $x\in\mathbb{R}^{d}$,
In the above SDE, the drift coefficient $b\dvtx\mathbb{R}\times\mathbb
{R}%
^{d}\rightarrow\mathbb{R}^{d}$ is only \textit{Borel measurable and
bounded}, and the equation is driven by standard Brownian motion
$B_{.}$ in $%
\mathbb{R}^{d}$.

More specifically, we construct a two-parameter pathwise Sobolev differentiable
%locally H\"{o}lder continuous
stochastic flow
\[
\mathbb{R}\times\mathbb{R}\times\mathbb{R}^{d}\ni(s,t,x)\quad\longmapsto\quad
\phi_{s,t}(x)\in\mathbb{R}^{d}
\]
for the SDE (\ref{SDE}) such that each flow map
\[
\mathbb{R}^{d}\ni x\quad\longmapsto\quad\phi_{s,t}(x)\in
\mathbb{R}^{d}
\]
is a Sobolev diffeomorphism in the sense that
%
%
%e2 #&#
\begin{equation}
\phi_{s,t}(\cdot)\mbox{ and }\phi_{s,t}^{-1}(
\cdot){\Large}\in L^{2}\bigl(\Omega,W^{1,p}\bigl(
\mathbb{R}^{d};w\bigr)\bigr) \label{Sobolev}
\end{equation}
for all $s,t\in\mathbb{R}$ and all $p \in(1, \infty)$. In (\ref
{Sobolev}) above, $W^{1,p}(%
\mathbb{R}^{d},w)$ denotes a weighted Sobolev space of mappings $\mathbb
{R}%
^{d}\rightarrow\mathbb{R}^{d}$ with any measurable weight function
$w\dvtx%
\mathbb{R}^{d}\rightarrow [0,\infty)$ satisfying the integrability
requirement
%
%
%e3 #&#
\begin{equation}
\int_{\mathbb{R}^{d}}\bigl(1+\llvert x\rrvert^{p}
\bigr)w(x)\,dx<\infty. \label{WeightCond}
\end{equation}
In particular, $\phi_{s,t}(\cdot)$ is locally $\alpha$-H\"{o}lder
continuous for all $\alpha<1$. When the SDE~(\ref{SDE}) is autonomous, we
show further that the stochastic flow corresponds to a Sobolev
differentiable perfect cocycle on $\mathbb{R}^{d}$. For precise statements
of the above results, see Theorem~\ref{MainTheorem} and Corollary \ref{CorollaryAut} in the next section.

A central objective of the article is to develop a new approach for
constructing a Sobolev differentiable stochastic flow for the SDE (\ref{SDE}).
Our approach is based
on Malliavin calculus ideas coupled with new probabilistic estimates on the
spatial weak derivatives of solutions of the SDE. A unique (and
%pleasantly
striking) feature of these estimates is that they do not depend on the
spatial regularity of the drift coefficient~$b$.
%Needless to say, the existence of differentiable flows for SDE's with
%measurable drifts is
%counter-intuitive: The dominant `culture' in stochastic (and
%deterministic)
%dynamical systems is that the flow `inherits' its spatial regularity
%from
%the driving vector fields. Furthermore, in the stochastic setting, the
%stochastic flow is in general even a %little `rougher' in the space
%variable than the driving vector fields.
% (cf. \cite{Kunita}, \cite{MohammedScheutzow}).

The existence of a Sobolev differentiable stochastic flow for the SDE
(\ref{SDE}) is
exploited (Section~\ref{sec3}) to obtain a unique weak solution $u(t,x)$ of the
(Stratonovich) stochastic transport equation %
%
%
%e4 #&#
\begin{equation}
\label{StratTr} \cases{\displaystyle d_{t}u(t,x)+ \bigl(b(t,x)\cdot Du(t,x)\bigr)\,dt+
\sum_{i=1}^{d}e_{i}\cdot Du(t,x)
\circ dB_{t}^{i}=0,
\cr
u(0,x)= u_{0}(x),}
\end{equation}
when $b$ is just bounded and measurable, $u_{0}\in C_{b}^{1}(\mathbb
{R}^{d})$%
, and $\{e_{i}\}_{i=1}^{d}$ a basis for $\mathbb{R}^{d}$. This result is
%surprising since
interesting in view of the fact that the corresponding deterministic
transport equation
is in general ill-posed; cf. \cite{Ambrosio,DiPernaLions}.
We also note that our result holds without the existence
%is stronger than that in in the sense that we do not assume the
%existence
of the divergence of $b$, and furthermore, our solutions are spatially (and
also Malliavin) Sobolev differentiable (cf. \cite{FlandoliGubinelliPriola}).

SDEs with discontinuous coefficients and driven by Brownian motion (or more
general noise) have been an important area of study in stochastic analysis
and other related branches of mathematics. Important applications of this
class of SDEs pertain to the modeling of the dynamics of interacting
particles in statistical mechanics and the description of a variety of other
random phenomena in areas such as biology or engineering. See, for
example, \cite{Portenko} or \cite{KrylovRoeckner} and the references therein.

Using estimates of solutions of parabolic PDEs and the Yamada--Watanabe
principle, the existence of a global unique strong solution to the SDE
(\ref%
{SDE}) was first established by Zvonkin \cite{Zvonkin74} in the
one-dimensional case, when $b$ is bounded and measurable. The latter
work is
a significant development in the theory of SDEs. Subsequently,
the result was generalized by Veretennikov \cite{Veretennikov} to the
multidimensional case. More recently, Krylov and R\"{o}ckner
employed local integrability criteria on the drift coefficient $b$ to obtain
unique strong solutions of the SDE (\ref{SDE}) by using an argument of
Portenko \cite{Portenko}. An alternative approach, which does not rely
on a
pathwise uniqueness argument and which also yields the Malliavin
differentiability of solutions to (\ref{SDE}) was recently developed in
\cite%
{M-BP} and \cite{PMNPZ}. We also refer to the recent article \cite%
{DaPratoFlandoliRoeckner} for an extension of the previous results to a
Hilbert space setting. In \cite{DaPratoFlandoliRoeckner}, the authors employ
techniques based on solutions of infinite-dimensional Kolmogorov equations.

Another important issue in the study of SDEs with (bounded) measurable
coefficients is the regularity of their solutions with respect to the
initial data and the existence of stochastic flows. See \cite{Kunita,MohammedScheutzow} for more information on the existence and
regularity of stochastic
flows for SDEs, and \cite{MohammedScheutzowI,MohammedScheutzowII}
in the case of stochastic differential systems with memory.

Using the method of stochastic characteristics, stochastic flows may be employed
%we employ the stochastic flows of the SDE (1)
to prove uniqueness of solutions of stochastic transport equations
under weak
regularity hypotheses on the drift coefficient $b$. See, for example,
\cite{FlandoliGubinelliPriola}, where the authors use estimates of
solutions of
backward Kolmogorov equations to show the existence of a stochastic
flow of
diffeomorphisms with $\alpha^{\shortmid}$-H\"{o}lder continuous
derivatives for $\alpha^{\shortmid}<\alpha$, where $b\in
C([0,1];C_{b}^{\alpha}(\mathbb{R}^{d}))$, and $C_{b}^{\alpha}(\mathbb
{R}%
^{d})$ is the space of bounded $\alpha$-H\"{o}lder continuous functions.
A similar result also holds true, when $b\in L^{q}([0,1];L^{p}(%
\mathbb{R}^{d}))$ for $p,q$ such that $p\geq2,q>2,\frac{d}{p}+\frac
{2}{q}<1$%
. %This striking result was obtained in
See \cite{FedrizziFlandoli}. Here, the authors construct, for any
$\alpha\in
(0,1)$, a stochastic flow of $\alpha$-H\"{o}lder continuous homeomorphisms
for the SDE (\ref{SDE}). Furthermore, it is shown in \cite{FedrizziFlandoli}
that the solution map
\[
\mathbb{R}^{d}\ni x\quad\longmapsto\quad X_{\cdot}^{x}\in
L^{p}\bigl([0,1]\times\Omega;%
\mathbb{R}^{d}
\bigr)
\]
of the SDE (\ref{SDE}) is differentiable in the $L^{p}(\Omega)$-sense for
every $p\geq2$.

%\textit{It would be appropriate to show the reader the relation of
%this sense of
%differentiability to the Sobolev differentiability of our stochastic
%flow.}

The approach used in \cite{FedrizziFlandoli} %of that paper
is based on a Zvonkin-type transformation \cite{Zvonkin74} and
estimates of
solutions of an associated backward parabolic PDE. We also mention the
recent related works \cite{FedrizziFlandoli1,Fedrizzi} and \cite
{Attanasio}. For an overview of this topic, the reader may also consult the
book \cite{Flandoli}.\hskip.2pt\footnote{%
After completing the preparation of this article, personal communication
with Flandoli indicated work in preparation with Fedrizzi \cite%
{FedrizziFlandoli0} on similar issues regarding the regularity of stochastic
flows for SDEs, using a different approach.}
% In the present article we show, using probabilistic methods, that
%the stochastic flow generated by the SDE (\ref{SDE}) is in fact
%\textit{%
%Sobolev differentiable} in the classical sense of (\ref{Sobolev}),
%even when
%$b$ is bounded and measurable.
%is even Sobolev differentiable in the sense of (\ref{Sobolev}), when
%$b$ is bounded and measurable.
In this connection, it should be noted that our method for constructing a
stochastic flow for the SDE (\ref{SDE}) is heavily dependent on Malliavin
calculus ideas together with some difficult probabilistic estimates (cf.~\cite{PMNPZ}).

Our paper is organized as follows: in Section~\ref{sec2} we introduce basic
definitions and notations and provide some auxiliary results that are needed
to prove the existence of a Sobolev differentiable stochastic flow for the
SDE (\ref{SDE}). See Theorem~\ref{MainTheorem} and Corollary \ref{CorollaryAut} in Section~\ref{sec2}. %(\ref{MainTheorem}).
We also briefly discuss a specific extension of this result to SDEs with
multiplicative noise. In Section~\ref{sec3}, we give an application of our
approach to
the construction of a unique Sobolev differentiable solution to the
(Stratonovich) stochastic transport equation (\ref{StratTr}). The \hyperref[sec4]{Appendix} specifies
the relative compactness
criterion of DaPrato, Malliavin and Nualart that is central to the
construction of the Sobolev flow
\cite{DaPratoMalliavinNualart}.

%s2 #&#
\section{Existence of a Sobolev differentiable stochastic flow}\label{sec2}

Throughout this paper, we denote by $%
B_{t}=(B_{t}^{(1)},\ldots,B_{t}^{(d)}), t \in\mathbb{R}$, $d$-dimensional
Brownian motion on the complete Wiener space $(\Omega,\mathcal{F},\mu)$
where $\Omega:=C(\mathbb{R};\mathbb{R}^{d})$ is given the compact open
topology and $\mathcal{F}$ is its $\mu$-completed Borel $\sigma$-field with
respect to Wiener measure $\mu$.

In order to describe the cocycle associated with the stochastic flow of our
SDE, we define the $\mu$-preserving (ergodic) Wiener shift $\theta
(t,\cdot)\dvtx \Omega\to\Omega$ by
\[
\theta(t,\omega) (s):= \omega(t+s)- \omega(t),\qquad \omega\in\Omega, t,s
\in
\mathbb{R}.
\]
The Brownian motion is then a \textit{perfect helix} with respect to
$\theta$: that is,
\[
B_{t_1 + t_2}(\omega) - B_{t_1}(\omega) = B_{t_2}\bigl(
\theta(t_1,\omega)\bigr)
\]
for all $t_1, t_2 \in\mathbb{R}$ and all $\omega\in\Omega$. The above
helix property is a convenient pathwise expression of the fact that Brownian
motion $B$ has stationary ergodic increments.

%spacewith respect to a $\mu$-completed
%filtration $\mathcal{F}_{t},0\leq t\leq1$ constructed on the
%probability
%space%
%\begin{equation}
%(\Omega,\mathcal{F},\mu) \label{ProbabSpace}
%\end{equation}%
%where $\Omega:=C_{0}([0,1];\mathbb{R}^{d})$ (Wiener space on
%$[0,1]$), $%
%\mathcal{F}:=\mathcal{B}(\Omega)$ (Borel $\sigma$-algebra on $\Omega
%)$)
%and $\mu$ is the Wiener measure on $(\Omega,\mathcal{F}).$

Our main focus of study in this section is the $d$-dimensional
% we consider
%for all $0\leq s\leq1,x\in\mathbb{R}^{d}$
SDE
%
%
%e5 #&#
\begin{equation}
X_{t}^{s,x}=x+\int_{s}^{t}b
\bigl(u,X_{u}^{s,x}\bigr)\,du+B_{t}-B_{s},
\qquad s, t \in\mathbb{R}, x \in\mathbb{R}^{d}, \label{FlowSDE}
\end{equation}
where the drift coefficient $b\dvtx\mathbb{R} \times\mathbb{R}%
^{d}\rightarrow\mathbb{R}^{d}$ is a bounded Borel-measurable function.
%and $B_{.}$ a $d$-dimensional Brownian
%motion.

It is known that the above SDE has a unique strong global solution $%
X_{.}^{s,x}$ for each $x \in\mathbb{R}^{d}$ (\cite{Veretennikov} or
\cite{PMNPZ,M-BP}).

% to (\ref{FlowSDE}).

Here, we will establish the existence of a \textit{Sobolev-differentiable}
stochastic flow of diffeomorphisms for the SDE (\ref{FlowSDE}).

%
%de1 #&#
\begin{definition}
\label{StochasticFlow}A map $\mathbb{R} \times\mathbb{R} \times\mathbb
{R}%
^d \ni(s,t,x,\omega)\longmapsto\phi_{s,t}(x, \omega) \in\mathbb{R}^d$
is a \textit{stochastic flow of homeomorphisms} for the SDE (\ref{FlowSDE}) if there
exists a universal set $\Omega^* \in\mathcal{F}$ of full Wiener measure
such that for all $\omega\in\Omega^*$, the following statements are true:
\begin{longlist}[(iii)]
\item[(i)] For any $x\in\mathbb{R}^{d}$, the process $\phi_{s,t}(x, \omega),s,t
\in\mathbb{R}$, is a strong global solution to the SDE (\ref{FlowSDE}).

\item[(ii)] $\phi_{s,t}(x,\omega)$ is continuous in $(s,t,x) \in\mathbb{R}
\times
\mathbb{R} \times\mathbb{R}^{d}$.

\item[(iii)] $\phi_{s,t}(\cdot,\omega)=\phi_{u,t}(\cdot,\omega) \circ
\phi_{s,u}(\cdot,\omega)$ for all $s, u, t \in\mathbb{R}$.\vspace*{1pt}

\item[(iv)] $\phi_{s,s}(x,\omega)=x$ for all $x\in\mathbb{R}^{d}$ and $s \in
\mathbb{R}$.

\item[(v)] $\phi_{s,t}(\cdot,\omega)\dvtx\mathbb{R}^{d} \to\mathbb{R}^{d}$ are
homeomorphisms for all $s,t \in\mathbb{R}$.
\end{longlist}
\end{definition}

A stochastic flow $\phi_{s,t}(\cdot, \omega)$ of homeomorphisms is
said to
be \textit{Sobolev-diffe\-rentiable} if for all $s,t \in\mathbb{R}$, the maps
$\phi_{s,t}(\cdot, \omega)$ and $\phi_{s,t}^{-1}(\cdot, \omega)$ are
Sobolev-differentiable in the sense described below.

From now on, we use $\llvert\cdot\rrvert$ to denote the norm
of a
vector in $\mathbb{R}^{d}$ or a matrix in~$\mathbb{R}^{d\times d}$.

In order to prove the existence of a Sobolev differentiable flow for
the SDE
(\ref{FlowSDE}), we need to introduce a suitable class of weighted Sobolev
spaces. Fix $p\in(1,\infty)$ and let $w\dvtx\mathbb
{R}^{d}\rightarrow
(0,\infty)$ be a Borel-measurable function satisfying
%
%
%e6 #&#
\begin{equation}
\int_{\mathbb{R}^{d}}\bigl(1+\llvert x\rrvert^{p}
\bigr)w(x)\,dx<\infty. \label{WeightCond}
\end{equation}

Let $L^{p}(\mathbb{R}^{d},w)$ denote the Banach space of all Borel-measurable
functions $u=(u_{1},\ldots,u_{d})\dvtx\mathbb{R}^{d}\rightarrow
\mathbb{R}^{d}$
such that
%
%
%e7 #&#
\begin{equation}
\int_{\mathbb{R}^{d}}\bigl\llvert u(x)\bigr\rrvert^{p}w(x)\,dx<
\infty \label{LPW}
\end{equation}
and equipped with the norm
\[
\|u\|_{L^p(\mathbb{R}^{d},w)}:= \biggl[\int_{\mathbb{R}^{d}}\bigl
\llvert u(x)
\bigr\rrvert^{p}w(x)\,dx \biggr]^{1/p}. %\label{LPW}
\]
Furthermore, denote by $W^{1,p}(\mathbb{R}^{d},w)$ the linear space of
functions $u\in L^{p}(\mathbb{R}^{d},w)$ with weak partial derivatives $
D_{j}u \in L^{p}(\mathbb{R}^{d},w)$ for $j=1,\ldots,d$. We equip this space
with the complete norm
%
%
%e8 #&#
\begin{equation}
\llVert u\rrVert_{1,p,w}:=\llVert u\rrVert_{L^{p}(\mathbb{R}%
^{d},w)}+\sum
_{i,j=1}^{d}\llVert D_{j}u_{i}
\rrVert_{L^{p}(\mathbb{R}%
^{d},w)}. \label{WeightedSobolevNorm}
\end{equation}

We will show that the strong solution $X_{t}^{s,.}$ of the SDE (\ref
{FlowSDE}%
) is in $L^{2}(\Omega,\break L^{p}(\mathbb{R}^{d},w))$ when $p \in(1, \infty
)$ (see Corollary %
\ref{Holder}). In fact, the SDE (\ref{FlowSDE}) \mbox{implies} the following
estimate:
\[
\bigl\llvert X_{t}^{s,x}\bigr\rrvert^{p}\leq
c_{p}\bigl(\llvert x\rrvert^{p}+\llvert t-s\rrvert
^{p}\llVert b\rrVert_{\infty
}^{p}+\llvert
B_{t}-B_{s}\rrvert^{p}\bigr)
\]
for all $s,t \in\mathbb{R}, x \in\mathbb{R}^d$.

On the other hand, it is easy to see %from the above inequality
%(\ref{FlowSDE})
that the solutions $X_{t}^{s,.}$ of SDE (\ref{FlowSDE}) are in general
not in $L^{p}(\mathbb{R}^{d},dx)$
with respect to Lebesgue measure $dx$ on $\mathbb{R}^d$: just consider the
special trivial case $b \equiv0$. This implies that solutions of the
SDE (%
\ref{FlowSDE}) (if they exist) may not belong to the Sobolev space
$W^{1,p}(%
\mathbb{R}^{d},dx), p \in(1, \infty)$. However, we will show that such
solutions do indeed
belong to the weighted Sobolev spaces $W^{1,p}(\mathbb{R}^{d},w)$ for
$p \in(1, \infty)$.

%
%re2 #&#
\begin{remark}
\label{Muckenhoupt}(i) Let $w\dvtx\mathbb{R}^{d}\rightarrow
(0,\infty)$ be a weight function in Muckenhoupt's $A_{p}$-class ($%
1<p<\infty$), that is a locally (Lebesgue) integrable function on
$\mathbb{R%
}^{d}$ such that
\[
\sup\biggl( \frac{1}{\lambda_{d}(B)}\int_{B}w(x)\,dx \biggr)
\biggl( \frac
{1}{%
\lambda_{d}(B)}\int_{B}\bigl(w(x)
\bigr)^{1/(1-p)}\,dx \biggr) ^{p-1}=:c_{w,p}<\infty,
\]
where the supremum is taken over all balls $B$ in $\mathbb{R}^{d}$ and $
\lambda_{d}$ is Lebesgue measure on~$\mathbb{R}^{d}$. For example, the
function $w(x)=\llvert x\rrvert^{\gamma}$ is an $A_{p}$-weight iff $%
-d<\gamma<d(p-1)$. Other examples of weights are given by positive
superharmonic functions. See, for example, \cite{HKM} and \cite{Kufner}
and the
references therein. %\newline
Denote by $H^{1,p}(\mathbb{R}^{d},w)$ the completion of $C^{\infty}(%
\mathbb{R}^{d})$ with respect to the norm $\llVert\cdot\rrVert_{1,p,w}$
in (\ref{WeightedSobolevNorm}). If $w$ is a $A_{p}$-weight, then we have
\[
W^{1,p}\bigl(\mathbb{R}^{d},w\bigr)=H^{1,p}\bigl(
\mathbb{R}^{d},w\bigr)
\]
for all $1<p<\infty$; see, for example, \cite{HKM}.

(ii) Let $p_{0}=\inf\{q>1\dvtx w$ is a $A_{q}$-weight\} and let $u\in
W^{1,p}(%
\mathbb{R}^{d},w)$. If $p_{0}<p/d$, then $u$ is locally H\"{o}lder
continuous with any exponent $\alpha$ such that $0<\alpha<1-dp_{0}/p$.
\end{remark}

We now state our main result in this section which gives the existence
of a
Sobolev differentiable stochastic flow for the SDE (\ref{FlowSDE}).

%
%th3 #&#
\begin{theorem}\label{MainTheorem}
In the SDE (\ref{FlowSDE}), assume that the drift
coefficient
$b$ is Borel-measurable and bounded. Then the SDE (\ref{FlowSDE}) has a
Sobolev differentiable stochastic flow $\phi_{s,t}\dvtx \mathbb{R}^d
\to
\mathbb{R}^d, s,t \in\mathbb{R}$: that is,
%There exists a stochastic flow $\phi_{s,t}$ of the SDE (%
%\ref{FlowSDE}). Moreover, the flow is Sobolev differentiable: That is
%
\[
\phi_{s,t}(\cdot)\mbox{ and }\phi_{s,t}^{-1}(
\cdot){\Large}\in L^{2}\bigl(\Omega,W^{1,p}\bigl(
\mathbb{R}^{d},w\bigr)\bigr)
\]
for all $s,t\in\mathbb{R}$ and all $p \in(1, \infty)$.
\end{theorem}

%
%re4 #&#
\begin{remark}
\label{HolderProperty}If $w$ is a $A_{p}$-weight, then it follows from
Remark %
\ref{Muckenhoupt}(ii) that a version of $\phi_{s,t}(\cdot)$ is
locally H\"{o}lder continuous for all $0<\alpha<1$ and all $s,t$.
\end{remark}

The following corollary is a consequence of Theorem~\ref{MainTheorem} and the helix property of the Brownian motion.

%
%co5 #&#
\begin{corollary}
\label{CorollaryAut}Consider the autonomous SDE
%
%
%e9 #&#
\begin{equation}
X_{t}^{s,x}=x+\int_{s}^{t}b
\bigl(X_{u}^{s,x}\bigr)\,du+B_{t}-B_{s},\qquad
s,t\in\mathbb{R} \label{SDEAut}
\end{equation}
with bounded Borel-measurable drift $b\dvtx\mathbb{R}^{d}\rightarrow
\mathbb
{R}%
^{d}$. Then the stochastic flow of the SDE (\ref{SDEAut}) has a version
which generates a perfect Sobolev-differentiable cocycle $(\phi
_{0,
t},\theta(t,\cdot))$ where $\theta(t,\cdot)\dvtx\Omega\rightarrow
\Omega$ is the $\mu$-preserving Wiener shift. More specifically, the
following perfect cocycle property holds \textit{for all} $\omega\in
\Omega$ and
all $t_{1},t_{2}\in\mathbb{R}$:
\[
\phi_{0,t_{1}+t_{2}}(\cdot,\omega)=\phi_{0,t_{2}}\bigl(\cdot,\theta
(t_{1},\omega)\bigr)\circ\phi_{0,t_{1}}(\cdot,\omega).
\]
\end{corollary}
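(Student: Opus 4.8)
The plan is to deduce Corollary~\ref{CorollaryAut} from Theorem~\ref{MainTheorem} by exploiting the helix property of Brownian motion together with the fact that, in the autonomous case, the shifted flow solves the ``same'' SDE driven by the shifted noise. First I would record the standard identity for autonomous SDEs: if $\phi_{s,t}(x,\omega)$ denotes (a version of) the stochastic flow of (\ref{SDEAut}), then for every fixed $r\in\mathbb{R}$ one has
\begin{equation*}
\phi_{s+r,t+r}(x,\omega)=\phi_{s,t}(x,\theta(r,\omega))
\end{equation*}
for all $s,t,x$ and $\mu$-a.e. $\omega$. This is because $u\mapsto B_{u+r}(\omega)-B_{r}(\omega)=B_{u}(\theta(r,\omega))$ by the helix property, so the process $t\mapsto \phi_{s+r,t+r}(x,\omega)$ satisfies
\begin{equation*}
\phi_{s+r,t+r}(x,\omega)=x+\int_{s}^{t}b(\phi_{s+r,u+r}(x,\omega))\,du+B_{t}(\theta(r,\omega))-B_{s}(\theta(r,\omega)),
\end{equation*}
which is exactly the SDE (\ref{SDEAut}) for the driving path $\theta(r,\omega)$; by the strong uniqueness quoted after (\ref{FlowSDE}) both sides agree a.s. Taking $s=0$ and using the cocycle (flow) property (iii) from Definition~\ref{StochasticFlow}, namely $\phi_{0,t_1+t_2}=\phi_{t_1,t_1+t_2}\circ\phi_{0,t_1}$, and then rewriting $\phi_{t_1,t_1+t_2}(\cdot,\omega)=\phi_{0,t_2}(\cdot,\theta(t_1,\omega))$ by the identity just established, yields the desired cocycle relation.

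The remaining issue is that all of the above holds only on $\omega$-sets of full measure that a priori depend on the parameters $(s,t,x)$ or on $(t_1,t_2)$, whereas the statement demands the cocycle identity \textbf{for all} $\omega\in\Omega$ and all $t_1,t_2$. The standard route here is a \emph{perfection} argument: one starts from the ``crude'' cocycle, which holds off a $(t_1,t_2)$-dependent null set, and produces an indistinguishable version that is a perfect cocycle. Concretely, I would invoke the perfection theorem for crude cocycles (as in Arnold's monograph, or the treatment in \cite{MohammedScheutzow}): using continuity of $(s,t,x)\mapsto\phi_{s,t}(x,\omega)$ from Theorem~\ref{MainTheorem}, the crude identity extends from a countable dense set of $(t_1,t_2)$ to all $(t_1,t_2)$ off a single null set $N$, and then measurable selection / the measure-preserving ergodicity of $\theta$ allows one to modify $\phi$ on a $\theta$-invariant null set so that the identity holds everywhere. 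The Sobolev regularity claim for the perfected version is automatic, since $\phi_{0,t}(\cdot,\omega)$ is merely being changed on a null set of $\omega$ and the $L^2(\Omega;W^{1,p}(\mathbb{R}^d,w))$ membership was already established in Theorem~\ref{MainTheorem}.

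The main obstacle I anticipate is precisely this perfection step: verifying that the hypotheses of the abstract perfection theorem are met, in particular measurability of the flow in all variables jointly and the a.s.\ continuity in $(s,t,x)$ needed to reduce ``for all $(t_1,t_2)$'' to ``for a countable dense set of $(t_1,t_2)$''. Everything else---the helix computation and the algebraic manipulation with the flow property---is routine. I would therefore structure the proof as: (1) state the helix identity $\phi_{s+r,t+r}(\cdot,\omega)=\phi_{s,t}(\cdot,\theta(r,\omega))$ a.s.\ and prove it via strong uniqueness; (2) combine with Definition~\ref{StochasticFlow}(iii) to get the crude cocycle; (3) apply a perfection theorem, citing continuity from Theorem~\ref{MainTheorem}, to upgrade to a perfect cocycle; (4) note that Sobolev differentiability is inherited from Theorem~\ref{MainTheorem}.
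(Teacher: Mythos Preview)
Your overall strategy is correct and the helix computation plus the algebraic manipulation with the two-parameter flow property is exactly right. However, your route to ``for all $\omega$'' differs from the paper's. You propose to first establish a \emph{crude} cocycle (the identity holding a.s.\ for each fixed $(t_1,t_2)$) and then invoke an abstract perfection theorem (\`a la Arnold or \cite{MohammedScheutzow}) to upgrade it. The paper bypasses perfection machinery entirely: it works directly with the set $\Omega^\ast$ introduced in the proof of Theorem~\ref{MainTheorem} (the set of $\omega$ for which the SDE admits a unique Sobolev-differentiable family of solutions), and its key step is to show that $\Omega^\ast$ is $\theta$-\emph{invariant}, i.e.\ $\theta(t_1,\cdot)(\Omega^\ast)\subseteq\Omega^\ast$ for every $t_1$. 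This is done by observing that if $\omega\in\Omega^\ast$ then $t\mapsto X^{t_1,x}_{t+t_1}(\omega)$ solves the SDE driven by $B_\cdot(\theta(t_1,\omega))$, so $\theta(t_1,\omega)\in\Omega^\ast$. Once $\Omega^\ast$ is $\theta$-invariant, uniqueness gives $\phi_{t_1,t_1+t_2}(x,\omega)=\phi_{0,t_2}(x,\theta(t_1,\omega))$ \emph{for every} $\omega\in\Omega^\ast$ (not merely a.s.), and combining with the two-parameter flow property (\ref{Group}), which already holds for all $\omega\in\Omega^\ast$, yields the perfect cocycle directly.

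The trade-off: your approach is more modular and would work even if one only had an a.s.\ flow to begin with, but it requires checking the hypotheses of a perfection theorem and citing external machinery. The paper's approach is more elementary and self-contained, but it leans on the specific construction of $\Omega^\ast$ in the proof of Theorem~\ref{MainTheorem} and the fact that the defining property of $\Omega^\ast$ (existence of a unique Sobolev-differentiable solution family) is itself preserved under the shift.
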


We will prove Theorem~\ref{MainTheorem} through a sequence of lemmas and
propositions. We begin by stating our main proposition.

%
%pr6 #&#
\begin{proposition}
\label{MainProposition} Let $b\dvtx\mathbb{R} \times\mathbb{R}^d
\rightarrow
\mathbb{R}^d$ be bounded and measurable. Let $U$ be an open and bounded
subset of $\mathbb{R}^d$. For each $t \in\mathbb{R}$ and $p > 1$, we have
\[
X_t^{\cdot} \in L^2\bigl(\Omega;
W^{1,p}(U)\bigr).
\]
\end{proposition}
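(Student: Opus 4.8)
The aim is to show that the strong solution $X_t^{\cdot}$ of the SDE \eqref{FlowSDE} belongs to $L^2(\Omega;W^{1,p}(U))$ for every bounded open $U$ and every $p>1$. The idea is to obtain the result by approximation: replace the bounded measurable drift $b$ by a sequence of smooth, uniformly bounded drifts $b_n$ with $b_n\to b$ in $L^1_{loc}(\mathbb{R}\times\mathbb{R}^d)$ (say by mollification, retaining $\|b_n\|_\infty\le\|b\|_\infty$), and let $X_t^{n,x}$ denote the corresponding classical flow, which is smooth in $x$. The crucial point — and this is where the paper's Malliavin-calculus machinery enters — is a \emph{uniform} estimate of the form
\begin{equation*}
\sup_n E\Bigl[\,\bigl|\,D X_t^{n,x}\,\bigr|^{2}\,\Bigr]\le C(t,p,\|b\|_\infty),
\end{equation*}
and more precisely a bound on $\sup_n E\bigl[\|X_t^{n,\cdot}\|_{W^{1,p}(U)}^2\bigr]$ that depends on $b$ only through $\|b\|_\infty$ (and on $|U|$, $t$, $p$, $d$), \emph{not} on any derivative of $b$. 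This is the ``pleasantly surprising'' estimate advertised in the introduction.

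First I would record that each $X_t^{n,\cdot}$ is genuinely in $W^{1,p}(U)$ with $L^2(\Omega)$-integrable norm, using smoothness of $b_n$ and the representation of $DX_t^{n,x}$ as the solution of the variational equation $DX_t^{n,x}=I+\int_s^t Db_n(u,X_u^{n,x})\,DX_u^{n,x}\,du$. Second, I would establish the key uniform bound. The natural route, following \cite{PMNPZ}, is to write $DX_t^{n,x}$ (or a suitable functional of it) via a Malliavin-calculus / Girsanov representation that converts the spatial derivative into an expression involving a stochastic integral against the Brownian motion with an integrand built from $b_n$ composed with the flow — a representation in which $b_n$ appears \emph{undifferentiated}. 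One then applies Hölder's inequality together with the Girsanov theorem (changing to the measure under which $B$ absorbs the drift) and Gaussian/heat-kernel estimates to bound the $p$-th spatial moments after integrating over $x\in U$; the finiteness of $|U|$ is what makes the $x$-integration converge. The output is an estimate $\sup_n E[\|X_t^{n,\cdot}\|_{W^{1,p}(U)}^2]\le C<\infty$ with $C$ independent of $n$.

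Third, with this uniform bound in hand, $\{X_t^{n,\cdot}\}_n$ is bounded in the reflexive Banach space $L^2(\Omega;W^{1,p}(U))$ (for $p>1$), hence has a weakly convergent subsequence with some limit $Y$. Separately, standard stability results for SDEs with bounded measurable drift (e.g. via Girsanov, or the Krylov/Veretennikov estimates used to get strong uniqueness) give $X_t^{n,x}\to X_t^{s,x}$ in, say, $L^2(\Omega)$ for each fixed $x$, and after integrating in $x$ over $U$ one identifies the weak limit: $Y=X_t^{s,\cdot}$ a.s. Since the weak limit lies in $L^2(\Omega;W^{1,p}(U))$, we conclude $X_t^{\cdot}\in L^2(\Omega;W^{1,p}(U))$, with $\|X_t^{\cdot}\|_{L^2(\Omega;W^{1,p}(U))}\le\liminf_n\|X_t^{n,\cdot}\|_{L^2(\Omega;W^{1,p}(U))}\le C$ by weak lower semicontinuity of the norm.

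\textbf{Main obstacle.} The routine parts are the approximation setup, the variational equation for the smooth flows, and the weak-compactness/identification argument. The genuine difficulty — essentially the technical heart of the paper — is the second step: producing the $n$-uniform bound on $E[\|X_t^{n,\cdot}\|_{W^{1,p}(U)}^2]$ with a constant depending on $b_n$ only through $\|b_n\|_\infty$. This requires the Malliavin/Girsanov representation of the spatial derivative in which the drift is not differentiated, together with delicate moment estimates (Gaussian bounds, Hölder in the Girsanov density, control of the resulting stochastic integrals uniformly in the mollification parameter). I expect the bulk of the argument, and any subtlety about which $p$ and which moments close the estimate, to live here.
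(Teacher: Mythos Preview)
Your overall architecture matches the paper's: approximate $b$ by smooth compactly supported $b_n$ with $\sup_n\|b_n\|_\infty\le C$, obtain a uniform bound $\sup_{n,x}E[|\partial_x X_t^{n,x}|^p]\le C_{d,p}(\|b\|_\infty)$, then pass to the limit by weak compactness in $L^2(\Omega;W^{1,p}(U))$ and identify the weak limit with $X_t^{\cdot}$. You also correctly locate the main obstacle in the uniform estimate.

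However, your description of \emph{how} the key uniform bound is obtained is off, and since you flag this as the heart of the matter it is worth correcting. There is no Malliavin or Bismut--Elworthy--Li type representation of $\partial_x X_t^{n,x}$ in which $b_n$ appears undifferentiated. Instead the paper (following \cite{PMNPZ}) expands the variational equation as a Dyson series
\[
\partial_x X_t^{n,x}=\mathcal I_d+\sum_{m\ge1}\int_{0<s_1<\dots<s_m<t} b_n'(s_1,X_{s_1}^{n,x}):\cdots:b_n'(s_m,X_{s_m}^{n,x})\,ds_1\cdots ds_m,
\]
applies Girsanov to replace $X_\cdot^{n,x}$ by $x+B_\cdot$ (paying a harmless Dol\'eans--Dade exponential bounded in terms of $\|b\|_\infty$), and then bounds each iterated expectation by writing it as an integral against a product of Gaussian transition kernels and performing \emph{deterministic} integration by parts in the space variables, shifting every derivative off $b_n$ and onto the heat kernels. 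The resulting combinatorics of ``allowed strings'' of $P$, $D^\alpha P$, $D^\alpha D^\beta P$ factors, together with elementary heat-kernel bounds, yields a term-by-term estimate $C^m\|b\|_\infty^m t^{m/2}/\Gamma(m/2+1)$; summability in $m$ then gives the uniform $L^p$ bound on the Jacobian. So $b_n$ does end up undifferentiated, but the mechanism is Girsanov plus deterministic IBP on Gaussian densities, not a Malliavin representation of the spatial derivative.

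Where Malliavin calculus actually enters in the paper is in the \emph{identification} step, not in the uniform bound: a compactness criterion in $L^2(\Omega)$ based on uniform H\"older-type control of the Malliavin derivatives $D_s X_t^{n,x}$ (from \cite{DaPratoMalliavinNualart}) is used to upgrade weak convergence to strong $L^2(\Omega)$ convergence of the pairings $\langle X_t^{n},\varphi\rangle$, which then allows one to test against $\varphi'$ and conclude that the weak-$L^2(\Omega;L^p(U))$ limit of $\partial_x X_t^{n,\cdot}$ is indeed the weak spatial derivative of $X_t^{\cdot}$. Your proposed route via ``standard stability results'' giving $X_t^{n,x}\to X_t^x$ in $L^2(\Omega)$ for each $x$ is also valid (the paper invokes this too, via \cite{PMNPZ}, \cite{M-BP}), so this part of your sketch is fine; just be aware that the paper organizes the two ingredients---uniform derivative bound and Malliavin compactness---the opposite way round from your proposal.
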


We will prove Proposition~\ref{MainProposition} using two steps. In the
\textit{first step}, we show that for a bounded smooth function $%
b\dvtx[0,1]\times\mathbb{R}^{d}\rightarrow\mathbb{R}^{d}$ with compact
support, it is possible to estimate the norm of $X_{t}^{\cdot}$ in $%
L^{2}(\Omega,W^{1,p}(U))$ independently of the size of the spatial
Jacobian $b^{\prime}$ of
$b$, with
the estimate depending only on $\Vert b\Vert_{\infty}$.
%To do this we use the same technique as introduced in \cite{PMNPZ}.

In the \textit{second step}, we will approximate our bounded measurable
coefficient $b$ by %which will be approximated pointwisely by
a sequence $\{b_n\}_{n=1}^{\infty}$ of smooth compactly supported functions
as in step 1. We then show that the corresponding sequence $X_t^{n,\cdot}$
of solutions is relatively compact in $L^2(\Omega)$ when integrated against
a test function on $\mathbb{R}^d$. By step 1, we use weak compactness
of the
above sequence in $L^2(\Omega, W^{1,p}(U))$ to conclude that the limit point
$X_t^{\cdot}$ of the above sequence must also lie in this space.

We now turn to the first step of our procedure. Note that if $b$ is a
compactly supported smooth function, the corresponding solution\vspace*{1pt} of the
SDE (%
\ref{SDE}) is (strongly) differentiable with respect to $x$, and the
first-order spatial Jacobian $\frac{\partial}{\partial x}X_{t}^{x}$
satisfies the
linearized random ODE
%
%
%e10 #&#
\begin{equation}
\cases{ \displaystyle d\frac{\partial}{\partial x}X_{t}^{x}=
b^{\prime}\bigl(t,X_{t}^{x}\bigr)
\frac{
\partial}{\partial x}X_{t}^{x}\,dt,
\vspace*{5pt}\cr
\displaystyle
\frac{\partial}{\partial x}X_{0}^{x}= \mathcal{I}_{d}.}\label{linearizedeq}
\end{equation}
In the above equation and throughout this section, $\mathcal{I}_{d}$ is
the $d\times d$ identity matrix and $b^{\prime}(t,x):= ( \frac
{\partial}{\partial x_{i}}b^{(j)}(t,x) ) _{1\leq
i,j\leq d}$ denotes the spatial Jacobian derivative of $b$.

A key estimate in the first step of the argument is provided by the
following proposition.

%
%pr7 #&#
\begin{proposition}
\label{sobolevestimate} Assume that $b$ is a smooth function with
compact support. Then for any $p \in[1, \infty)$ and $t \in\mathbb
{R}$, we have
the following estimate for the solution of the linearized equation (\ref
{linearizedeq}):
\[
\sup_{x \in\mathbb{R}^d} E \biggl[ \biggl| \frac{\partial}{\partial x} X_t^x
\biggr|^p \biggr] \leq C_{d,p}\bigl(\|b\|_{\infty}\bigr),
\]
where $C_{d,p} $ is an increasing continuous function depending only on $d$
and $p$.
\end{proposition}

The proof of Proposition~\ref{sobolevestimate} relies on the following
sequence of lemmas which provide estimates on expressions depending on
the Gaussian distribution and its derivatives. To this end, we define
$P(t,z):= (2\pi t)^{d/2}e^{-|z|^2/2t}, t > 0$, where $|z|$ is the
Euclidean norm of a vector $z \in\mathbb{R}^d$.

%
%le8 #&#
\begin{lemma}\label{le8}
Let $\phi, h \dvtx [0,1] \times\mathbb{R}^d \rightarrow\mathbb{R}$ be
measurable functions such that $| \phi(s,z)| \leq e^{-|z|^2 /3s}$ and
$\|
h \|_{\infty} \leq1$. Also let $\alpha,\beta\in\{0,1\}^d$ be multiindices
such that $|\alpha| = |\beta| = 1$. Then there exists a universal
constant $%
C $ (independent of $\phi$, $h$, $\alpha$ and $\beta$) such that
\[
\biggl\llvert\int_{1/2}^1 \int
_{t/2}^t \int_{\mathbb{R}^d} \int
_{\mathbb{R}^d} \phi(s,z) h(t,y) D^{\alpha} D^{\beta}
P(t-s, y -z) \,dy \,dz \,ds \,dt \biggr\rrvert\leq C.
\]
Furthermore, there is a universal positive constant (also denoted by)
$C$ such that for measurable functions $g$ and $h$ bounded by 1, we have
\[
\biggl\llvert\int_{1/2}^1 \int
_{t/2}^t \int_{\mathbb{R}^d} \int
_{\mathbb{R}^d} g(s,z)P(s,z) h(t,y) D^{\alpha}D^{\beta}P(t-s,y-z)
\,dy\,dz\,ds\,dt \biggr\rrvert\leq C
\]
and
\[
\biggl\llvert\int_{1/2}^1 \int
_{t/2}^t \int_{\mathbb{R}^d} \int
_{\mathbb{R}^d} g(s,z)D^{\gamma}P(s,z) h(t,y)
D^{\alpha}D^{\beta}P(t-s,y-z) \,dy\,dz\,ds\,dt \biggr\rrvert\leq C.
\]
\end{lemma}

\begin{pf}
We will only give a proof of the first estimate in the lemma. The
proofs of the second and third estimates are left to the reader.

Denote the first integral in the lemma by $I$. Let $l,m \in\mathbb
{Z}^d$ and define $[l, l+1):= [l^{(1)}, l^{(1)} +1) \times\cdots
\times[l^{(d)}, l^{(d)} +1)$ and similarly for $[m,m+1)$. Truncate the
functions $\phi, h$ by setting $\phi_l(s,z):= \phi(s,z) 1_{[l,
l+1)}(z)$ and $h_m(t,y):= h(t,y) 1_{[m,m+1)}(y)$.

%Denote the first integral in the lemma by $I$,
In the first integral, we replace
%and by $I_{l,m}$ the same integral when
$\phi$, $h$ by $\phi_l$, $h_m$, respectively, and thus define
\[
I_{l,m}:= \int_{1/2}^1 \int
_{t/2}^t \int_{\mathbb{R}^d} \int
_{\mathbb{R}^d} \phi_l(s,z) h_m(t,y)
D^{\alpha} D^{\beta} P(t-s, y -z) \,dy \,dz \,ds \,dt.
\]
Therefore, we can write $I = \sum_{l,m \in\mathbb{Z}^d} I_{l,m}$.
Below, we let $C$ be a generic constant that may vary from line to line.

Assume $\| l - m \|_{\infty}:= \max_{i}|l^{(i)} - m^{(i)}| \geq2$.
For $z \in[l, l+1)$ and $y \in[m, m+1)$ we have $| z - y | \geq\|l -
m\|_{\infty} - 1$. If $\alpha\neq\beta$, we have that
\[
D^{\alpha} D^{\beta} P(t-s, z-y) = \frac{(z^{(i)} - y^{(i)})(z^{(j)} -
y^{(j)})}{(t-s)^2}P(t-s,y-z)
\]
for a suitable choice of $i,j$. Then we can find $C$ such that
\[
\bigl|D^{\alpha} D^{\beta} P(t-s, z-y)\bigr| \leq C e^{-(\|l - m\|_{\infty}- 2)^2/4}.
\]
If $\alpha= \beta$, we have
\[
\bigl(D^{\alpha}\bigr)^2 P(t-s, y-z) = \biggl(
\frac{(y^{(i)} - z^{(i)})^2}{t-s} - 1 \biggr)\frac{P(t-s, y-z)}{t-s} %
\]
and similarly we find $C$ such that
\[
\bigl|\bigl(D^{\alpha}\bigr)^2 P(t-s, y-z)\bigr| \leq C e^{-(\|l -m \|_{\infty} - 2)^2/4}. %
\]
In both cases we have $| I_{l,m} | \leq C e^{-|l|^2/8}e^{-(\| l-m\|
_{\infty} - 2)^2/4}$ and it follows that
\[
\sum_{\|l-m\|_{\infty} \geq2} |I_{l,m}| \leq C.  %
\]

Assume $\| l -m \|_{\infty} \leq1$ and let $\hat{\phi}_l(s,u)$ and
$\hat{h}_m(t,u)$ be the Fourier transforms in the second variable,
defined by
\[
\hat{h}_{m}(t,u):= (2 \pi)^{-d/2} \int_{\mathbb{R}^d}
h(t,x) e^{-i
(u,x)} \,dx %
\]
and similarly for $\hat{\phi}_l(s,u)$. By the Plancherel theorem we
have that
\[
\int_{\mathbb{R}^d} \hat{\phi}_l(s,u)^2 \,du
= \int_{\mathbb{R}^d} \phi_l(s,z)^2 \,dz \leq
C e^{-|l|^2 / 6} %
\]
for all $s \in[0,1]$ and
\[
\int_{\mathbb{R}^d} \hat{h}_m(t,u)^2 \,du =
\int_{\mathbb{R}^d} h_m(t,y)^2 \,dy \leq1.
\]
We can write
%
%
%e11 #&#
\begin{equation}
\label{2to1integral} \qquad I_{l,m} = \int_{1/2}^1
\int_{t/2}^t \int_{\mathbb{R}^d} \hat{
\phi}_l(s,u) \hat{h}_m(t,-u) u^{(i)}u^{(j)}(t-s)
e^{-(t-s)|u|^2/2}\,du \,ds \,dt.
\end{equation}

To see this, start with the right-hand side. Then we have by Fubini's theorem
\begin{eqnarray*}
&& \int_{\mathbb{R}^d} \hat{h}_m(t,-u) \hat{
\phi}_l(s,u) u^i u^j(t-s) e^{-(t-s) |u |^2 /2}
\,du
\\
&&\qquad  = (2 \pi)^{-d} \int_{\mathbb{R}^d} \int
_{\mathbb{R}^d} \int_{\mathbb
{R}^d} h_m(t,x)
e^{i(u,x)} \phi_l(s,y) e^{-i(u,y)} u^i
u^j(t-s)
\\
&&\hspace*{117pt}{}\times  e^{-(t-s) |u|^2/2} \,du \,dx \,dy
\\
&&\qquad  = \int_{\mathbb{R}^d} \int_{\mathbb{R}^d}
h_m(t,x) \phi_l(s,y) (t-s)
\\
&&\hspace*{65pt}{}\times  \biggl[(2
\pi)^{-d} \int_{\mathbb{R}^d} e^{i(u,x-y)}
u^iu^j e^{-(t-s)
|u|^2/2} \,du \biggr] \,dx \,dy.
\end{eqnarray*}

Now look at the expression in the square brackets. Substitute $v =
\sqrt{t-s} u$ to get
\begin{eqnarray*}
&& (2 \pi)^{-d} \int_{\mathbb{R}^d} e^{i(u,x-y)}
u^iu^j e^{-(t-s)
|u|^2/2} \,du
\\
&&\qquad = (2 \pi)^{-d} (t-s)^{-d/2} \int_{\mathbb{R}^d}
e^{i(v/{\sqrt {t-s}}, x-y)} \frac{v^i}{\sqrt{t-s}} \frac{v^j}{\sqrt{t-s}} e^{-|v|^2
/2} \,dv
\\
&&\qquad = (2 \pi)^{-d} (t-s)^{-d/2} (t-s)^{-1} \int
_{\mathbb{R}^d} e^{i(v,
(x-y)/{\sqrt{t-s}})} v^i v^j
e^{-|v|^2 /2} \,dv.
\end{eqnarray*}
Now\vspace*{1pt} put $f(v) = e^{-|v|^2 /2}$ and $p(v) = v^{(i)}v^{(j)}$. From
properties of the Fourier transform, we know that $\widehat{pf} =
D^{\alpha} D^{\beta} \hat{f} $ and $\hat{f} = f$. This gives that the
above expression is equal to
\begin{eqnarray*}
&& (2 \pi)^{-d/2}(t-s)^{-d/2}(t-s)^{-1}D^{\alpha}
D^{\beta} f \biggl( \frac
{x-y}{\sqrt{t-s}} \biggr)
\\
&&\qquad  = (t-s)^{-1}
D^{\alpha} D^{\beta} P(t-s, x-y).
\end{eqnarray*}
This proves equation (\ref{2to1integral}).

Applying the inequality $ab \leq\frac{1}{2} a^2c + \frac{1}{2}
b^2c^{-1}$ to (\ref{frstDerEst}) with $a = \hat{\phi}_l(s,u)u^{(i)}$, $b = \hat
{h}_m(t,-u)u^{(j)}$ and $c = e^{|l|^2/12}$ we get
\begin{eqnarray*}
| I_{l,m} | & \leq&\frac{1}{2} \int_{1/2}^1
\int_{t/2}^t \int_{\mathbb
{R}^d} \hat{
\phi}_l(s,u)^2\bigl(u^{(i)}\bigr)^2
e^{|l|^2/12} e^{-(t-s)|u|^2/2} \,du \,ds \,dt
\\
&&{} + \frac{1}{2} \int_{1/2}^1 \int
_{t/2}^t \int_{\mathbb{R}^d} \hat
{h}_m(t,-u)^2\bigl(u^{(j)}\bigr)^2
e^{-|l|^2/12} e^{-(t-s)|u|^2/2} \,du \,ds \,dt
\\
& \leq&\frac{1}{2} \int_{1/2}^1 \int
_{t/2}^t \int_{\mathbb{R}^d} \hat{
\phi}_l(s,u)^2|u|^2 e^{|l|^2/12}
e^{-(t-s)|u|^2/2} \,du \,ds \,dt
\\
&&{} + \frac{1}{2} \int_{1/2}^1 \int
_{t/2}^t \int_{\mathbb{R}^d} \hat
{h}_m(t,-u)^2|u|^2 e^{-|l|^2/12}
e^{-(t-s)|u|^2/2} \,du \,ds \,dt.
\end{eqnarray*}
For the first term, integrate first with respect to $t$ in order to get
\[
\int_{1/2}^1 \int_{t/2}^t
\int_{\mathbb{R}^d} \hat{\phi}_l(s,u)^2|u|^2
e^{|l|^2/12} e^{-(t-s)|u|^2/2} \,du \,ds \,dt \leq C e^{-|l|^2/12} %
\]
and for the second term, integrate with respect to $s$ first to get
\[
\int_{1/2}^1 \int_{t/2}^t
\int_{\mathbb{R}^d} \hat{h}_m(t,-u)^2|u|^2
e^{-|l|^2/12} e^{-(t-s)|u|^2/2} \,du \,ds \,dt \leq C e^{-|l|^2/12},
\]
which gives $ | I_{l,m}| \leq C e^{-|l|^2/12}$, and hence
\[
\sum_{\|l-m\|_{\infty} \leq1} |I_{l,m}| \leq C.  %
\]\upqed
\end{pf}

% Notice that we have $\int_{\mathbb{R}^d} P(t,z)\,dz = 1$ and that
% \begin{equation} \label{frstDerEst}
% \int_{\mathbb{R}^d} |D^{\alpha} P(t,z)| \,dz \leq C t^{-1/2},
% \end{equation}
% \begin{equation} \label{scndDerEst}
% \int_{\mathbb{R}^d} |D^{\alpha} D^{\beta} P(t,z)| \,dz \leq C t^{-1}.
% \end{equation}

Using the previous lemma we can show the following:

%
%le9 #&#
\begin{lemma}
\label{cnvLmm}
Let $g,h\dvtx [0,1] \times\mathbb{R}^d \rightarrow\mathbb{R}$ be
Borel-measurable and bounded by 1 and let $r \geq0$. As before we let
$\alpha, \beta, \gamma$ be multiindices with length 1. Then there exists a
universal constant $C$ such that
\begin{eqnarray*}
&& \biggl\llvert\int_{t_0}^t \int
_{t_0}^{t_1} \int_{\mathbb{R}^d} \int
_{\mathbb{R}^d} g(t_2,z)P(t_2-t_0,z)
h(t_1,y)
\\
&&\hspace*{68pt}{}\times D^{\alpha}D^{\beta}P(t_1-t_2,y-z)
(t-t_1)^r \,dy\,dz\,dt_2\,dt_1 \biggr
\rrvert
\\
&&\qquad \leq C(1+r)^{-1} (t-t_0)^{r+1}
\end{eqnarray*}
and
\begin{eqnarray*}
&& \biggl\llvert\int_{t_0}^t \int
_{t_0}^{t_1} \int_{\mathbb{R}^d} \int
_{\mathbb{R}^d} g(t_2,z)D^{\gamma}P(t_2-t_0,z)
h(t_1,y)
\\
&&\hspace*{70pt}\times{}D^{\alpha} D^{\beta}P(t_1-t_2,y-z)
(t-t_1)^r \,dy\,dz\,dt_2\,dt_1 \biggr
\rrvert
\\
&&\qquad \leq C(1+r)^{-1/2} (t-t_0)^{r+1/2}.
\end{eqnarray*}
\end{lemma}

\begin{pf}
We begin by proving the first estimate in the lemma for $t = 1$, $t_0 =
0$. The following estimate holds for
%From Lemma~\ref{cnvLmm} we have that for
each integer $k\ge0$:
\begin{eqnarray*}
&& \biggl\llvert\int_{2^{-k-1}}^{2^{-k}} \int
_{t/2}^t \int_{\mathbb{R}^d} \int
_{\mathbb{R}^d} g(s,z)P(s,z) h(t,y)
\\
&&\hspace*{83pt}{}\times D^{\alpha}D^{\beta}P(t-s,y-z)
(1-t)^r \,dy\,dz\,ds\,dt \biggr\rrvert
\\
&&\qquad \leq C\bigl(1-2^{-k-1}\bigr)^r 2^{-k}.
\end{eqnarray*}
To see this, use the fact $P(at,z) = a^{-d/2}P(t,a^{-1/2}z)$ and make
the following substitutions in the second estimate in Lemma~\ref{le8}: $t':=
2^k t$ and $s':= 2^k s$,
%Use the easily verified fact that $P(at,z) = a^{-d/2}P(t,a^{-1/2}z)$
%and substitute
$z':= 2^{k/2}z$ and $y':= 2^{k/2}y$, $ \tilde{h}(t,y):= \frac
{(1-t)^r}{(1-2^{-k-1})^r} h(t,y) $.
% in Lemma (\ref{cnvLmm}), the result follows.

Summing the above inequalities over $k$ gives
\begin{eqnarray*}
&& \biggl\llvert\int_{0}^1 \int
_{t/2}^t \int_{\mathbb{R}^d} \int
_{\mathbb{R}^d} g(s,z)P(s,z) h(t,y)D^{\alpha}D^{\beta}P(t-s,y-z)
(1-t)^r \,dy\,dz\,ds\,dt \biggr\rrvert
\\
&&\qquad \leq C (1+r)^{-1}.
\end{eqnarray*}
Moreover, it is easy to see that
% from the bound (\ref{scndDerEst})
%
\begin{eqnarray*}
&& \biggl\llvert\int_0^1 \int
_0^{t/2} \int_{\mathbb{R}^d} \int
_{\mathbb{R}^d} g(s,z) P(s,z) h(t,y)D^{\alpha}D^{\beta}
P(t-s,y-z) (1-t)^r \,dy\,dz\,ds\,dt \biggr\rrvert
\\
&&\qquad \leq C \int_0^1 \int_0^{t/2}
(t-s)^{-1}(1-t)^r \,ds \,dt \leq C(1+r)^{-1}
\end{eqnarray*}
and combining these bounds gives the first assertion of the lemma for
$t=1$, $t_0 = 0$. For general $t$ and $t_0$ use the change of variables
$t_1':= \frac{t_1 - t_0}{t-t_0}$, $t_2':= \frac{t_2 - t_0}{t - t_0}$,
$y':= (t-t_0)^{-1/2}y$ and $z':= (t-t_0)^{-1/2}z$.

The second assertion of the lemma is proved similarly.
\end{pf}

We now turn to the following key estimate (cf.
% which can be considered a generalization of a bound given in
\cite{Da07}, Proposition 2.2):

%
%le10 #&#
\begin{lemma}
\label{oldestimate} Let $B$ be a $d$-dimensional Brownian Motion starting
from the origin and $b_1, \ldots, b_n$ be compactly supported continuously
differentiable functions $b_i \dvtx [0,1] \times\mathbb{R}^d
\rightarrow
\mathbb{R}$ for $i=1,2, \ldots, n$. Let $\alpha_i\in\{0,1\}^d$ be a
multiindex such that $| \alpha_i| = 1$ for $i = 1,2, \ldots, n$. Then there
exists a universal constant $C$ (independent of $\{ b_i \}_i$, $n$, and
$\{
\alpha_i \}_i$) such that
%
%e12 #&#
\begin{eqnarray}\label{estimate}
&& \Biggl\llvert E \Biggl[ \int_{t_0 < t_1 < \cdots< t_n < t} \Biggl( \prod
_{i=1}^n D^{\alpha_i}b_i(t_i,x+B_{t_i})
\Biggr) \,dt_1 \cdots \,dt_n \Biggr] \Biggr\rrvert
\nonumber\\[-8pt]\\[-8pt]
&&\qquad \leq\frac{C^n \prod_{i=1}^n \|b_i \|_{\infty}
(t-t_0)^{n/2}}{\Gamma
((n/2) + 1)},\nonumber
\end{eqnarray}
where $\Gamma$ is the Gamma-function and $x \in\mathbb{R}^d$. Here, $%
D^{\alpha_i}$ denotes the partial derivative with respect to the
$j^{\prime}$th space variable, where $j$ is the position of the $1$ in~$\alpha_i$.
\end{lemma}

\begin{pf}
Without loss of generality, assume that $\| b_i \|_{\infty} \leq1$ for
$i =\break
1, 2, \ldots, n$. Using the Gaussian density, we write the left-hand side
of the estimate %
\eqref{estimate} in the form
\begin{eqnarray*}
&& \Biggl\llvert\int_{t_0 < t_1 < \cdots< t_n < t} \int_{\mathbb{R}^{dn}}
\prod_{i=1}^n D^{\alpha_i}
b_i(t_i,x+ z_i)
\\
&&\hspace*{99pt}{}\times  P(t_i -
t_{i-1}, z_i - z_{i-1} ) \,dz_1 \cdots \,dz_n \,dt_1 \cdots \,dt_n \Biggr\rrvert.
\end{eqnarray*}
Introduce the notation
\begin{eqnarray*}
&& J_n^{\alpha} (t_0,t, z_0)
\\
&&\qquad = \int
_{ t_0 < t_1 < \cdots< t_n < t} \int_{%
\mathbb{R}^{dn}} \prod
_{i=1}^n D^{\alpha_i} b_i(t_i,x+z_i)
\\
&&\hspace*{129pt}{}\times
P(t_i - t_{i-1}, z_i - z_{i-1} )
\,dz_1 \cdots \,dz_n \,dt_1 \cdots \,dt_n,
\end{eqnarray*}
where $\alpha= (\alpha_1, \ldots,\alpha_n) \in\{0,1 \}^{nd}$. We shall show
that $|J_n^{\alpha}(t_0,t,z_0)| \leq C^n(t-t_0)^{n/2} / \Gamma( n/2 + 1)$,
thus proving the proposition.

To do this, we will use integration by parts to shift the derivatives
from the $b_i$'s onto
the Gaussian kernel. This will be done by introducing the alphabet
\[
\mathcal{A}(\alpha) = \bigl\{ P, D^{\alpha_1}P, \ldots, D^{\alpha_n}P,
D^{\alpha_1}D^{\alpha_2}P, \ldots, D^{\alpha_{n-1}}D^{\alpha_n}P \bigr
\},
\]
where $D^{\alpha_i}$, $D^{\alpha_i}D^{\alpha_{i+1}}$ denotes the derivatives
in $z$ of $P(t,z)$.

Take a string $S = S_1 \cdots S_n$ in $\mathcal{A}(\alpha)$ and define
\begin{eqnarray*}
&& I_S^{\alpha} (t_0,t,z_0)
\\
&&\qquad = \int
_{t_0 < \cdots< t_n <t} \int_{\mathbb{R}
^{dn}} \prod
_{i=1}^n b_i(t_i,
x+z_i)
\\
&&\hspace*{115pt}{}\times  S_i(t_i - t_{i-1},
z_i - z_{i-1}) \,dz_1 \cdots \,dz_n
\,dt_1 \cdots \,dt_n.
\end{eqnarray*}
We will need only a special type of strings: say that a string is \emph{allowed} if, when all the $D^{\alpha_i}P$'s are removed from the
string, a
string of the form $P \cdot D^{\alpha_s}D^{\alpha_{s+1}} P \cdot P \cdot
D^{\alpha_{s+1}}D^{\alpha_{s+2}} P \cdots P \cdot
D^{\alpha_r}D^{\alpha_{r+1}} P$ for $s \geq1$, $r \leq n-1$ remains. Also,
we will require that the first derivatives $D^{\alpha_i}P$ are written
in an
increasing order with respect to $i$.

We now claim that
%we can write
%
\[
J_n^{\alpha}(t_0,t,z_0) = \sum
_{j=1}^{2^{n-1}} \varepsilon_j
I_{S^j}^{\alpha}(t_0,t,z_0),
\]
where each $\varepsilon_j$ is either $-1$ or $1$ and each $S^j$ is an allowed
string in $\mathcal{A}(\alpha)$.
To see this, we proceed by induction on $n \geq1$.

The claim obviously holds for $n=1$. Assume that it holds for $n\geq
1$, and let $b_{0}$ be another function satisfying the requirements of
the lemma. Likewise with $\alpha_{0}$. Then
\begin{eqnarray*}
&& J_{n+1}^{(\alpha_{0},\alpha)}(t_{0},t,z_{0})
\\
&&\qquad = \int
_{t_{0}}^{t}\int_{%
\mathbb{R}^{d}}D^{\alpha
_{0}}b_{0}(t_{1},x+z_{1})P(t_{1}-t_{0},z_{1}-z_{0})J_{n}^{\alpha
}(t_{1},t,z_{1})\,dz_{1}\,dt_{1}
\\
&&\qquad = -\int_{t_{0}}^{t}\int_{\mathbb{R}^{d}}b_{0}(t_{1},x+z_{1})D^{\alpha
_{0}}P(t_{1}-t_{0},z_{1}-z_{0})J_{n}^{\alpha
}(t_{1},t,z_{1})\,dz_{1}\,dt_{1}
\\
&&\quad\qquad{} -\int_{t_{0}}^{t}\int_{\mathbb{R}%
^{d}}b_{0}(t_{1},x+z_{1})P(t_{1}-t_{0},z_{1}-z_{0})D^{\alpha
_{0}}J_{n}^{\alpha}(t_{1},t,z_{1})\,dz_{1}\,dt_{1}.
\end{eqnarray*}
Notice that
\[
D^{\alpha_{0}}I_{S}^{\alpha}(t_{1},t,z_{1})=-I_{\widetilde{S}}^{(\alpha
_{0},\alpha)}(t_{1},t,z_{1}),
\]
where
\[
\widetilde{S}=\cases{ D^{\alpha_{0}}P\cdot S_{2}\cdots
S_{n} &\quad if $S=P\cdot S_{2}\cdots S_{n}$,
\vspace*{3pt}\cr
D^{\alpha_{0}}D^{\alpha_{1}}P\cdot S_{2}\cdots
S_{n} &\quad if $S=D^{\alpha_{1}}P\cdot S_{2}\cdots
S_{n}$.}
\]
Here, $\widetilde{S}$ is not an allowed string in $\mathcal{A}(\alpha)$. So
from the induction hypothesis $D^{\alpha_{0}}J_{n}^{\alpha
}(t_{0},t,z_{0})=\sum_{j=1}^{2^{n-1}}-\varepsilon_{j}I_{\widetilde{S}}^{(\alpha
_{0},\alpha)}(t_{0},t,z_{0})$. This gives
\[
J_{n+1}^{(\alpha_{0},\alpha)}=\sum_{j=1}^{2^{n-1}}-
\varepsilon_{j}I_{D^{\alpha_{0}}P\cdot S^{j}}^{(\alpha_{0},\alpha
)}+\sum
_{j=1}^{2^{n-1}}\varepsilon_{j}I_{P\cdot\widetilde{S}^{j}}.
\]
It is easily checked that when $S^{j}$ is an allowed string in $\mathcal
{A}%
(\alpha)$, both $D^{\alpha_{0}}P\cdot S^{j}$ and $P\cdot\widetilde{S}^{j}$
are allowed strings in $\mathcal{A}(\alpha_{0},\alpha)$.

This proves the claim.

For the rest of the proof of Lemma~\ref{oldestimate} we will bound $%
I_S^{\alpha}$ when $S$ is an allowed string; that is, we will show that
there is a positive constant $M$ such that
\[
I_S^{\alpha}(t_0,t,z_0) \leq
\frac{M^n (t-t_0)^{n/2}}{\Gamma((n/2) +
1)}
\]
for all integers $n \geq1$ and for each allowed string $S$ in the
alphabet $%
\mathcal{A}(\alpha)$.

We proceed by induction on $n \geq0$: the case $n=0$ is immediate, so
assume $n > 0$ and that this holds for
all allowed strings of length less than $n$.
We consider the three cases:
\begin{longlist}[(3)]
\item[(1)] $S = D^{\alpha_1}P \cdot S'$ where $S'$ is a string in $\mathcal
{A}(\alpha')$ and $\alpha':= (\alpha_2, \ldots, \alpha_n)$;

\item[(2)]
$S = P \cdot D^{\alpha_1} D^{\alpha_2} P \cdot S'$ where $S'$ is a
string in $\mathcal{A}(\alpha')$ and $\alpha': = (\alpha_3, \ldots,
\alpha_n)$;

\item[(3)]
$S = P \cdot D^{\alpha_1}P \cdots D^{\alpha_m} P \cdot D^{\alpha_{m+1}}
D^{\alpha_{m+2}}P \cdot S'$ where $S'$ is a string in $\mathcal
{A}(\alpha')$ and $\alpha': = (\alpha_{m+3}, \ldots, \alpha_n)$.
\end{longlist}

In all the above cases, $S'$ is an allowed string in the alphabet.

\begin{longlist}[(3)]
\item[(1)]
We use the inductive hypothesis to bound $I_{S'}^{\alpha'}(t_1,t,z_1)$
and the bound
%
%
%e13 #&#
\begin{equation}
\label{frstDerEst} \int_{\mathbb{R}^d} \bigl| D^{\alpha}P(t,z)\bigr| \,dz \leq
Ct^{-1/2}
\end{equation}
to get
\begin{eqnarray*}
&& \bigl| I_S^{\alpha}(t_0,t,z_0)\bigr|
\\
&&\qquad  =
\biggl\llvert\int_{t_0}^t \int
_{\mathbb{R}^d} b_1(t_1,z_1)
D^{\alpha_1}P(t_1 - t_0, z_1 -
z_0) I_{S'}^{\alpha
'}(t_1,t,z_1)
\,dz_1 \,dt_1 \biggr\rrvert
\\
&&\qquad  \leq \frac{M^{n-1}}{\Gamma( (n+1)/2)} \int_{t_0}^t
(t-t_1)^{(n-1)/2}\int_{\mathbb{R}^d}
\bigl|D^{\alpha_1}P(t_1 - t_0, z_1 -
z_0) \bigr| \,dz_1 \,dt_1
\\
&&\qquad  \leq \frac{M^{n-1}C}{\Gamma( (n+1)/2)} \int_{t_0}^t
(t-t_1)^{(n-1)/2}(t_1 - t_0)^{-1/2}
\,dt_1
\\
&&\qquad  = \frac{M^{n-1}C \sqrt{\pi} (t-t_0)^{k/2}}{\Gamma( (n/2) + 1)}.
\end{eqnarray*}
The result follows if $M \geq\max\{ C \sqrt{ \pi}, 1 \}$.

\item[(2)]
For this case, we can write
\begin{eqnarray*}
&& I_S^{\alpha} (t_0,t,
z_0)
\\
&&\qquad = \int_{t_0}^t \int
_{t_1}^t \int_{\mathbb{R}^d} \int
_{\mathbb{R}^d} b_1(t_1,z_1)
b_2(t_2,z_2)P(t_1-t_0, z_1 - z_0)
\\
&&\hspace*{94pt}{}\times
D^{\alpha_1}D^{\alpha_2} P(t_2 - t_1,
z_2 - z_1) I_{S'}^{\alpha'}(t_2,t,z_2)
\,dz_1\,dz_2 \,dt_2\,dt_1.
\end{eqnarray*}
We set $h(t_2,z_2): = b_2(t_2,z_2) I_{S'}^{\alpha'}(t_2,z_2)
(t-t_2)^{1-n/2}$ so that by the inductive hypothesis we have
\[
\| h \|_{\infty} \leq M^{n-2}/\Gamma(n/2).  %
\]
Use the above estimate in the first assertion of Lemma~\ref{cnvLmm}
with $g = b_1$ and integrate with respect to $t_2$ first, to get
\[
\bigl|I_S^{\alpha}(t_0,t,z_0)\bigr| \leq
\frac{CM^{n-2}(t-t_0)^{n/2}}{n\Gamma
(n/2)}
\]
and the result follows if $M \geq\max\{C, 1 \}$.

\item[(3)]
We have
\begin{eqnarray*}
\hspace*{-5pt}&& I_S^{\alpha}(t_0,t,z_0)
\\[-1pt]
\hspace*{-5pt}&&\!\!\qquad = \int_{t_0 < \cdots
t_{m+2} < t} \int_{\mathbb{R}^{(m+2)d}}
P(t_1 - t_0, z_1 - z_0)
\\[-2pt]
\hspace*{-5pt}&&\!\!\hspace*{119pt}{}\times  \prod
_{j=1}^{m+2} b_j(t_j,z_j)
\\[-1pt]
\hspace*{-5pt}&&\!\!\hspace*{119pt}{} \times\prod_{j=2}^{m}
D^{\alpha_j}P(t_j - t_{j-1},z_j -
z_{j-1})
\\[-2pt]
\hspace*{-5pt}&&\!\!\hspace*{145pt}{}\times D^{\alpha_{m+1}}D^{\alpha_{m+2}} P(t_{m+2} -
t_{m+1},z_{m+2} - z_{m+1})
\\[-1pt]
\hspace*{-5pt}&&\!\!\hspace*{145pt}{} \times I_{S'}^{\alpha'}(t_{m+2},t,z_{m+2})
\,dz_1 \cdots \,dz_{m+2} \,dt_1 \cdots
\,dt_{m+2}.
\end{eqnarray*}
Set $h(t_{m+2},z_{m+2}): = b_{m+2}(t_{m+2},z_{m+2}) I_{S'}^{\alpha
'}(t_{m+2},t,z)(t-t_{m+2})^{(2+m-n)/2}$. Then from the inductive
hypothesis we have $\| h\|_{\infty} \leq M^{n-m-2}/\Gamma((n-m)/2)$.
Define
\begin{eqnarray*}
\hspace*{-5pt}&& A(t_m,z_m)
\\[-1pt]
\hspace*{-5pt}&&\qquad: = \int_{t_m}^t
\int_{t_{m+1}}^t \int_{\mathbb{R}^{2d}}
b_{m+1}(t_{m+1},z_{m+1}) h(t_{m+2},z_{m+2})(t-t_{m+2})^{(n-m-2)/2}
\\[-2pt]
\hspace*{-5pt}&&\hspace*{92pt}{}\times D^{\alpha_m}P(t_{m+1}
- t_m, z_{m+1} - z)D^{\alpha_{m+1}}D^{\alpha_{m+2}}
\\[-1pt]
\hspace*{-8pt}&&\hspace*{92pt}{} \times P(t_{m+2} -
t_{m+1}, z_{m+2} - z_{m+1}) \,dz_{m+1}
\,dz_{m+2} \,dt_{m+1}\,dt_{m+2}.
\end{eqnarray*}
Then Lemma~\ref{cnvLmm} implies that
\[
\bigl|A(t_m,z_m)\bigr| \leq\frac{C(n-m)^{-1/2}
M^{n-m-2}(t-t_m)^{(n-m-1)/2}}{\Gamma( (n-m)/2 )}.  %
\]
Using this in
\begin{eqnarray*}
I_S^{\alpha}(t_0,t,z_0)
&=& \int_{t_0 < \cdots t_{m+2}
< t} \int_{\mathbb{R}^{(m+2)d}} P(t_1
- t_0, z_1 - z_0)
\\[-2pt]
&&\hspace*{86pt}{}\times \prod _{j=1}^m b_j(t_j,z_j)
\\[-1pt]
&&\hspace*{86pt}{} \times\prod_{j=1}^{m-1}D^{\alpha_j}P(t_j
- t_{j-1}, z_j- z_{j-1})
\\[-2pt]
&&\hspace*{115pt}{}\times  \Omega(t_m,z_m)
\,dz_1 \cdots \,dz_m \,dt_1 \cdots \,dt_m
\end{eqnarray*}
and using the bound (\ref{frstDerEst}) several times gives
\begin{eqnarray*}
\hspace*{-3pt}&& \bigl|I_S^{\alpha}(t_0,t,z_0)\bigr|
\\
\hspace*{-3pt}&&\qquad  \leq C^{m+1} (n-m)^{-1/2} \frac{M^{n-m-2}}{\Gamma((n-m)/2)}
\\
\hspace*{-3pt}&&\quad\qquad{} \times\int_{t_0 < \cdots t_m < t} (t_2 - t_1)^{-1/2}
\cdots(t_m - t_{m-1})^{-1/2}(t-t_m)^{(n-m-1)/2}
\,dt_1 \cdots \,dt_m
\\
\hspace*{-3pt}&&\qquad  = C^{m+1}(n-m)^{-1/2} \frac{M^{n-m-2}\pi^{(m-1)/2} \Gamma(
(n-m+1)/{2}) }{\Gamma( (n-m)/{2}) \Gamma( (n/2) + 1) }
(t-t_0)^{n/2}.
\end{eqnarray*}
We can choose $M$ so large that the result holds. This completes the
induction argument.\quad\qed
\end{longlist}\noqed
\end{pf}

%
%re11 #&#
\begin{remark} \label{Doleans-DadeBound}
Assume $\psi\in L^{\infty}([0,1] \times\Omega; \mathbb{R}^d)$
is adapted to the filtration generated by the Brownian motion. Then we can
bound the Doleans--Dade exponential $\mathcal{E}(\int_0^1 \psi(u) \,dB_u
)$ in $L^p(\Omega)$ by an increasing continuous function of $\| \psi\|
_{L^{\infty}([0,1] \times\Omega)}$.

To see this, notice that $M_t: = \mathcal{E}(\int_0^t \psi(u) \,dB_u )$
is the unique solution to the linear SDE
\[
dM_t = M(t) \psi(t) \,dB_t,\qquad M_0 =1.
\]
By It\^o's formula, we get
\begin{eqnarray*}
E\bigl[ M^p_t \bigr] &=& 1 + \frac{p(p-1)}{2}\int
_0^t E\bigl[ M^p_u \bigl|
\psi(u) \bigr|^2 \bigr] \,du %
\\
&\leq& 1 + \frac{p(p-1)}{2}\| \psi\|_{L^{\infty}([0,1] \times\Omega)} \int_0^t
E\bigl[ M^p_u \bigr] \,du;
\end{eqnarray*}
and
\begin{eqnarray*}
E\bigl[ M^p_t \bigr]
&\leq&\exp\biggl\{
\frac{tp(p-1)\| \psi\|
_{L^{\infty}([0,1] \times\Omega)}}{2} \biggr\},
\end{eqnarray*}
where we have used Gr\"onwall's lemma in the last inequality.
\end{remark}

We are now ready to complete the proof of Proposition~\ref{sobolevestimate}.

\begin{pf*}{Proof of Proposition \protect\ref{sobolevestimate}}
Let $t \in[0,1]$. Iterating the linearized equation~(\ref{linearizedeq}), we obtain
\[
\frac{\partial}{\partial x}X_{t}^{x}=\mathcal{I}_{d}+\sum
_{n=1}^{\infty
}\int_{0<s_{1}<\cdots s_{n}<t}b^{\prime}
\bigl(s_{1},X_{s_{1}}^{x}\bigr) \cdots
b^{\prime}\bigl(s_{n},X_{s_{n}}^{x}
\bigr)\,ds_{1}\cdots \,ds_{n},
\]
where, as before, $b'$ stands for the spatial Jacobian matrix of $b$.
Let $p\in{}[1,\infty)$ and choose $r,s\in{}[1,\infty)$ such
that $sp=2^{q}$ for some integer $q$ and $\frac{1}{r}+\frac{1}{s}=1$. Then
by Girsanov's theorem and H\"{o}lder's inequality
\begin{eqnarray*}
\hspace*{-5pt}&& E \biggl[ \biggl|\frac{\partial}{\partial x}X_{t}^{x} \biggr|^{p}
\biggr]
\\
\hspace*{-5pt}&&\qquad = E \Biggl[ \biggl|%
\mathcal{I}_{d}+\sum
_{n=1}^{\infty}\int_{0<s_{1}<\cdots s_{n}<t}b^{\prime
}(s_{1},x+B_{s_{1}})
\cdots b^{\prime}(s_{n},x+B_{s_{n}})\,ds_{1}
\cdots \,ds_{n} \biggr|^{p}
\\
\hspace*{-5pt}&&\hspace*{225pt}{}\times  \mathcal{E}\biggl(\int
_{0}^{1}b(u,x+B_{u})\,dB_{u}
\biggr) \Biggr]
\\
\hspace*{-5pt}&&\qquad \leq  C_{1}\bigl(\Vert b\Vert_{\infty}\bigr)
\Biggl\llVert
\mathcal{I}%
_{d}+\sum_{n=1}^{\infty}
\int_{0<s_{1}<\cdots s_{n}<t}b^{\prime
}(s_{1},x+B_{s_{1}})\cdots
\\
\hspace*{-5pt}&&\hspace*{177pt}{}  \times b^{\prime}(s_{n},x+B_{s_{n}})\,ds_{1}
\cdots \,ds_{n}\Biggr\rrVert_{L^{ps}(\Omega,\mathbb{R}^{d\times d})}^{p}\!,
\end{eqnarray*}
where $C_{1}$ is a continuous increasing function as in Remark~\ref{Doleans-DadeBound}.

Then we obtain
\begin{eqnarray*}
\hspace*{-5pt} && E \biggl|\frac{\partial}{\partial x} X_{t}^{x} \biggr|^{p}
\\
\hspace*{-5pt}&& \qquad \leq C_{1}\bigl(\Vert b\Vert_{\infty}\bigr)
\\
\hspace*{-5pt}&&\quad\qquad{} \times
\Biggl\llVert
\mathcal{I}%
_{d}+\sum_{n=1}^{\infty}
\int_{0<s_{1}<\cdots s_{n}<t}b^{\prime
}(s_{1},x+B_{s_{1}})\cdots
\\
\hspace*{-5pt}&&\hspace*{144pt}{} \times b^{\prime}(s_{n},x+B_{s_{n}})\,ds_{1}
\cdots \,ds_{n}\Biggr\rrVert_{L^{sp}(\Omega,\mathbb{R}^{d\times d})}^{p}
\\
\hspace*{-5pt}&&\qquad \leq C_{1}\bigl(\Vert b\Vert_{\infty}\bigr)
\\
\hspace*{-5pt}&&\quad\qquad{}\times  \Biggl( 1+\sum
_{n=1}^{\infty
}\sum_{i,j=1}^{d}
\sum_{l_{1}, \ldots, l_{n-1}=1}^{d}\biggl\llVert\int
_{0<s_{1}<\cdots<s_{n}<t}\frac{\partial}{\partial x_{l_{1}}}%
b^{(i)}(s_{1},x+B_{s_{1}})
\\
\hspace*{-5pt}&&\hspace*{218pt}{}\times
\frac{\partial}{\partial x_{l_{2}}}%
b^{(l_{1})}(s_{2},x+B_{s_{2}})\cdots
\\
\hspace*{-5pt}&&\hspace*{218pt}{} \times\frac{\partial}{\partial x_{j}}%
b^{(l_{n-1})}(s_{n},x+B_{s_{n}})
\\
\hspace*{-5pt}&&\hspace*{246pt}{}\times ds_{1}
\cdots \,ds_{n}\biggr\rrVert_{L^{ps}(\Omega,%
\mathbb{R})} \Biggr) ^{p}\!\!.
\end{eqnarray*}

Now consider the expression
\begin{eqnarray*}
A &:=& \int_{0< s_1 < \cdots< s_n < t} \frac{\partial}{\partial x_{l_1}}
b^{(i)}(s_1,
x+B_{s_1}) \frac{\partial}{\partial x_{l_2}} b^{(l_1)}(s_2,x+B_{s_2})
\cdots
\\
&&\hspace*{59pt}{}\times
\frac{\partial}{\partial x_{l_n}} b^{(l_n)}(s_n, x+B_{s_n})
\,ds_1 \cdots \,ds_n.
\end{eqnarray*}
Then, using (deterministic) integration by parts, repeatedly, it is
easy to
see that $A^2$ can be written as a sum of at most $2^{2n}$ terms of the form
%
%
%e14 #&#
\begin{equation}
\int_{0 < s_1 < \cdots< s_{2n} < t} g_1 (s_1) \cdots
g_{2n}(s_{2n}) \,ds_1 \cdots \,ds_{2n},
\end{equation}
where $g_l \in\{ \frac{\partial}{\partial x_j} b^{(i)}(\cdot, x+B_{
\cdot}) \dvtx 1 \leq i,j \leq d \} $, $l = 1, 2, \ldots, 2n$.
Similarly, by
induction it follows that $A^{2^q}$ is the sum of at most $2^{q2^qn}$ terms
of the form
%
%
%e15 #&#
\begin{equation}
\int_{0 < s_1 < \cdots< s_{2n} < t} g_1 (s_1) \cdots
g_{2^qn}(s_{2^qn}) \,ds_1 \cdots \,ds_{2^qn}.
\end{equation}

Combining this with Lemma~\ref{oldestimate}, %(\ref{estimate})
we obtain the following estimate:
\begin{eqnarray*}
&& \biggl\llVert\int_{0<s_{1}<\cdots<s_{n}<t}\frac{\partial}{\partial
x_{l_{1}}}%
b^{(i)}(s_{1},x+B_{s_{1}})\frac{\partial}{\partial x_{l_{2}}}%
b^{(l_{1})}(s_{2},x+B_{s_{2}})\cdots
\\
&&\hspace*{104pt}{}\times
\frac{\partial}{\partial x_{j}}%
b^{(l_{n-1})}(s_{n},x+B_{s_{n}})\,ds_{1}\cdots \,ds_{n}\biggr\rrVert
_{L^{2^{q}}(\Omega, \mathbb{R})}
\\
&&\qquad \leq\biggl( \frac{2^{q2^{q}n}C^{2^{q}n}\Vert b\Vert_{\infty
}^{2^{q}n}t^{2^{q-1}n}}{\Gamma(2^{q-1}n+1)} \biggr) ^{2^{-q}}\leq\frac{
2^{qn}C^{n}\Vert b\Vert_{\infty}^{n}}{((2^{q-1}n)!)^{2^{-q}}}.
\end{eqnarray*}

Then it follows that
\begin{eqnarray*}
E \biggl[ \biggl| \frac{\partial}{\partial x} X_t^x \biggr|^p
\biggr]
&\leq& C_1\bigl(\|b\|_{\infty}\bigr) \Biggl(1 + \sum
_{n=1}^{\infty} \frac{%
d^{n+2}2^{qn}C^n \|b\|_{\infty}^n}{((2^{q-1}n)!)^{2^{-q}}} \Biggr)^p
= C_{d,p}\bigl(\|b\|_{\infty}\bigr).
\end{eqnarray*}
The right-hand side of this inequality is independent of $x \in\mathbb
{R}^d$%
, and the result follows.
\end{pf*}

%\newpage

For the rest of the paper, we will fix a bounded and measurable
$b\dvtx[0,1] \times\mathbb{R}^d \rightarrow\mathbb{R}^d$. It is proved in
\cite{Veretennikov} (and \cite{PMNPZ}) that the corresponding SDE (\ref
{FlowSDE}) has a unique strong solution, denoted by  $X_{.}^{s,x}$. Suppose
$b_n\dvtx[0,1] \times\mathbb{R}^d \rightarrow\mathbb{R}^d$ is a sequence
of compactly supported smooth functions such that
$b_{n}(t,x)\rightarrow b(t,x) \,dt\times dx$\mbox{-a.e.} and for some
positive constant $M$, $\llvert b_{n}(t,x)\rrvert\leq M<\infty$ for
all $n,t,x$. %for some constant $M$.
Denote by $X_{.}^{n,s,x}$ the solution of (\ref{FlowSDE}) when $b$ is
replaced by
$b_{n},n\geq1$.
We then have the following.

%
%le12 #&#
\begin{lemma}\label{WeakConvergenceOfSolutions}
Fix $s,t \in\mathbb{R}$ and $x \in\mathbb{R}^d$. Then the sequence
$X_{t}^{n,s,x}$ converges weakly in $L^2(\Omega; \mathbb{R}^d)$ to
$X_{t}^{s,x}$.
\end{lemma}

\begin{pf} %{Proof of Lemma~\ref{WeakConvergenceOfSolutions} }
For simplicity, consider $d =1$ and $s=0$. We start by noting that the set
\[
\biggl\{ \mathcal{E}\biggl(\int_0^1 h(u)
\,dB_u\biggr) \dvtx h \in C^1_b(\mathbb{R})
\biggr\} %
\]
spans a dense subspace of $L^2( \Omega; \mathbb{R})$. So, it suffices
to prove the convergence $E[ X_t^{n,x} \mathcal{E}(\int_0^1 h(u) \,dB_u)
] \rightarrow E[ X_t^{x} \mathcal{E}(\int_0^1 h(u) \,dB_u) ]$.

By the Cameron--Martin theorem, we have
\[
E\biggl[ X_t^{x} \mathcal{E}\biggl(\int
_0^1 h(u) \,dB_u\biggr) \biggr] =
\int_{ \Omega} X_t^x (\omega+ h) \,d \mu(
\omega). %
\]
The function $(u,x) \mapsto b(u,x) + h'(u)$ is still bounded, and so
$X_t^x( \cdot+ h)$ must coincide with the solution to (\ref{FlowSDE})
when $b$ is replaced by $b + h'$. Hence, by uniqueness in law of (\ref
{FlowSDE}), we may write
\[
E\biggl[ X_t^{x} \mathcal{E}\biggl(\int
_0^1 h(u) \,dB_u\biggr) \biggr] =
E\biggl[ (x + B_t) \mathcal{E}\biggl(\int_0^1
\bigl[b(u,x+B_u) + h'(u)\bigr] \,dB_u\biggr) \biggr]
\]
and similarly for $X_t^{n,x}$. We thus get
\begin{eqnarray*}
&& E\biggl[ X_t^{n,x} \mathcal{E}\biggl(\int
_0^1 h(u) \,dB_u\biggr) \biggr] -
E\biggl[ X_t^{x} \mathcal{E}\biggl(\int
_0^1 h(u) \,dB_u\biggr) \biggr]
\\
&&\qquad =E\biggl[(x+B_t) \biggl( \mathcal{E}\biggl(\int_0^1
\bigl[b_n(u,x+B_u) + h'(u)\bigr] \,dB_u
\biggr)
\\
&&\hspace*{103pt}{} - \mathcal{E}\biggl(\int_0^1
\bigl[b(u,x+B_u) + h'(u)\bigr] \,dB_u\biggr) \biggr)
\biggr].  %
\end{eqnarray*}

Using the inequality $| e^a - e^b | \leq|e^a + e^b | |a-b|$, H\"older's
inequality and Burkholder--Davis--Gundy inequality we find a constant
$C$ such that the above is bounded by
\begin{eqnarray*}
&& C \biggl( E\biggl[ \biggl(\mathcal{E}\biggl(\int_0^1
\bigl[b_n(u,x+B_u) + h'(u)\bigr] \,dB_u
\biggr)
\\
&&\hspace*{34pt}{} + \mathcal{E}\biggl(\int_0^1
\bigl[b(u,x+B_u) + h'(u)\bigr] \,dB_u\biggr)
\biggr)^4 \biggr] \biggr)^{1/4}
\\
&&\qquad {} \times\biggl( E\biggl[ \biggl( \int_0^1 \bigl(
b_n(u,x+B_u) - b(u,x+B_u)
\bigr)^2 \,du \biggr)^2
\\
&&\hspace*{55pt}{} + \biggl( \int_0^1
\bigl(b(u,x+B_u) + h'(u)\bigr)^2
\\
&&\hspace*{97pt} - \bigl(b_n(u,x+B_u) + h'(u)
\bigr)^2 \,du \biggr)^4\biggr] \biggr)^{1/4}.
\end{eqnarray*}
From Remark~\ref{Doleans-DadeBound}, since $b_n$ is uniformly bounded
we get that $ \{ \mathcal{E}(\int_0^1 [b_n(u,x+B_u) + h'(u)] \,dB_u) \}_n $
is bounded in $L^4(\Omega)$, so that the first factor above is
uniformly bounded. The second factor converges to zero by bounded convergence.
\end{pf}

We can actually strengthen the above lemma to get the following theorem.

%
%th13 #&#
\begin{theorem}\label{th13}
For any fixed $s,t \in\mathbb{R}$ and $x \in\mathbb{R}^d$, the
sequence $\{X_t^{n,s,x}\}_{n=1}^\infty$ converges strongly in
$L^2(\Omega; \mathbb{R}^d)$ to $X_t^{s,x}$.
\end{theorem}

\begin{pf}
For simplicity, consider the special case $s = 0$. We first give a
sketch of the proof that $\{X_t^{n,x}\}_{n=1}^\infty$
%$\{X_t^{n,s,x}\}_{n \geq1}$
is relatively compact in $L^2(\Omega; \mathbb{R}^d)$. %For details, see
%\cite{PMNPZ}.
We notice that by Corollary~\ref{compactcrit}, it is enough to find a
constant $C >0$ such that
%
%
%e16 #&#
\begin{equation}
\label{compactnessEstimate1} \sup_{n}E\bigl[ \bigl| D_{\theta}
X_t^{n,x} - D_{\theta'} X_t^{n,x}
\bigr|^2\bigr] \leq C \bigl| \theta- \theta'\bigr|
\end{equation}
for $\theta, \theta' \in[0,t]$ and
%
%
%e17 #&#
\begin{equation}
\label{compactnessEstimate2} \sup_{n} \sup_{\theta\in[0,t]} E
\bigl[ \bigl| D_{\theta} X_t^{n,x} \bigr|^2\bigr]
\leq C.
\end{equation}
We begin by noticing that the Malliavin derivative satisfies the
linearized equation
\[
D_{\theta}X_t^{n,x} = \mathcal{I}_{d} +
\int_{ \theta}^t b'
\bigl[\bigl(u,X_u^{n,x}\bigr) D_{\theta}X_u^{n,x}\bigr]
\,du,  %
\]
which is the same equation as for $\frac{\partial}{\partial x}
X_t^{n,x}$ when we let $\theta=0$. The above inequalities can then be
obtained in a similar manner as for the bounds developed in Proposition
\ref{sobolevestimate}. Indeed, we may iterate the above linearized
equation to obtain
%use Picard iteration to write
%
\[
D_{\theta}X_{t}^{n,x}=\mathcal{I}_{d}+
\sum_{k=1}^{\infty}\int_{ \theta
<u_{1}<\cdots u_{k}<t}b_n^{\prime}
\bigl(u_{1},X_{u_{1}}^{n,x}\bigr) \cdots
b^{\prime}_n\bigl(u_{k},X_{u_{k}}^{n,x}
\bigr)\,du_{1}\cdots \,du_{k}. %
\]
As in the proof of Proposition~\ref{sobolevestimate} with $p = 2$, we
get the bound
\[
E\bigl[\bigl| D_{\theta} X_t^{n,x} \bigr|^2\bigr]
\leq C_{d,2}\bigl(\|b\|_{\infty}\bigr),
\]
where the right-hand side is independent of $n,\theta, t$ and $x$. This
proves (\ref{compactnessEstimate2}).

Suppose now that $\theta< \theta'$, and write
\begin{eqnarray*}
&& D_{\theta} X_t^{n,x} - D_{\theta'}
X_t^{n,x}
\\
&&\qquad = \int_{\theta}^t
b'_n\bigl(u,X_u^{n,x}\bigr) D_{\theta}
X_u^{n,x} \,du - \int_{\theta'}^t
b'_n\bigl(u,X_u^{n,x}\bigr) D_{\theta'}
X_u^{n,x} \,du
\\
&&\qquad  = \int_{\theta}^{\theta'} b'_n
\bigl(u,X_u^{n,x}\bigr) D_{\theta}
X_u^{n,x} \,du
+ \int_{\theta'}^t b'_n
\bigl(u,X_u^{n,x}\bigr) \bigl(D_{\theta}
X_u^{n,x} - D_{\theta
'} X_u^{n,x}
\bigr) \,du
\\
&&\qquad  = D_{\theta} X_{\theta'}^{n,x} - \mathcal{I}_d
+ \int_{\theta'}^t b'_n
\bigl(u,X_u^{n,x}\bigr) \bigl(D_{\theta}
X_u^{n,x} - D_{\theta'} X_u^{n,x}
\bigr) \,du.
\end{eqnarray*}
%
%Applying Picard
Iterating the above linear equation, we get
\begin{eqnarray*}
&& D_{\theta}X_t^{n,x} - D_{\theta'}X_t^{n,x}
\\
&&\qquad = \Biggl( \mathcal{I}_d + \sum_{k=1}^{\infty}
\int_{\theta' < u_1 <
\cdots< u_k < t} b'_n\bigl(u_1,X_{u_1}^{n,x}
\bigr) \cdots b'_n\bigl(u_k,X_{u_k}^{n,x}
\bigr) \,du_1 \cdots \,du_k \Biggr)
\\
&&\quad\qquad{}\times  \bigl(D_{\theta}
X_{\theta'}^{n,x} - \mathcal{I}_d\bigr).  %
\end{eqnarray*}

On the other hand, note that
\[
D_{\theta} X_{\theta'}^{n,x} - \mathcal{I}_d
= \sum_{k=1}^{\infty} \int_{\theta< u_1 < \cdots< u_k < \theta'}
b'_n\bigl(u_1,X_{u_1}^{n,x}\bigr)
\cdots b'_n\bigl(u_k,X_{u_k}^{n,x}
\bigr) \,du_1 \cdots \,du_k %
\]
and so
\begin{eqnarray*}
&& E\bigl[\bigl|D_{\theta}X_t^{n,x} - D_{\theta'}X_t^{n,x}\bigr|^2
\bigr]
\\
&&\qquad \leq E \Biggl[ \Biggl\llvert\mathcal{I}_d + \sum
_{k=1}^{\infty} \int_{\theta
' < u_1 < \cdots< u_k < t}
b'_n(u_1,x + B_{u_1}) \cdots
\\
&&\hspace*{155pt}{}\times
b'_n(u_k,x + B_{u_k}) \,du_1 \cdots \,du_k \Biggr\rrvert^2
\\
&&\quad\qquad\hspace*{13pt}  {}\times\Biggl\llvert\sum_{k=1}^{\infty} \int
_{\theta< u_1 < \cdots< u_k <
\theta'} b'_n(u_1,x + B_{u_1})\cdots
\\
&&\hspace*{144pt}{}\times
b'_n(u_k,x + B_{u_k}) \,du_1
\cdots \,du_k \Biggr\rrvert^2
\\
&&\hspace*{162pt}{} \times\mathcal{E} \Biggl( \sum_{j=1}^d
\int_0^1 b^{(j)}_n(u,B_u)
\,dB_u \Biggr) \Biggr].
\end{eqnarray*}
By a similar argument as in the proof of Proposition~\ref{sobolevestimate}, we get
\[
E\bigl[\bigl|D_{\theta}X_t^{n,x} - D_{\theta'}X_t^{n,x}\bigr|^2
\bigr] \leq C_{d,2}\bigl(\| b \| _{\infty}\bigr) \bigl|\theta' -
\theta\bigr|,
\]
which proves (\ref{compactnessEstimate1}).

Let $\{X_t^{n_k,s,x} \}_{k \geq1}$ be a subsequence of $\{X_t^{n,s,x}
\}_{n \geq1}$. Applying the above compactness criterion to this
subsequence, we have that this subsequence is relatively compact in $L^2 (\Omega,{\mathbb R}^d)$. Thus,
we can extract a further subsequence which by Lemma~\ref{WeakConvergenceOfSolutions} must converge strongly to the limit
$X_t^{s,x}$. Since $L^2(\Omega; \mathbb{R}^d)$ is a Banach space, the
full sequence must converge strongly to $X_t^{s,x}$.
\end{pf}

%In \cite{PMNPZ} the authors show that the sequence $X_{t}^{n,s,x}$ is
%relatively compact in $L^2(\Omega; \mathbb{R}^d)$ (in the strong
%topology). Thus $\{ X_t^{n,x} \}$ has a converging subsequence, which,
%by Lemma~\ref{WeakConvergenceOfSolutions} has to converge to $X_t^x$.
%Since every subsequence of $\{ X_t^{n,x} \}$ has a further subsequence
%converging to the same limit $X_t^x$, the full sequence is convergent
%in $L^2(\Omega; \mathbb{R}^d)$ (with strong topology).

As a consequence of Proposition~\ref{sobolevestimate} and the above
discussion, we obtain the following
result.

%
%co14 #&#
\begin{corollary}
\label{Holder} Let $X_{.}^{s,x}$ be the unique strong solution to the
SDE (%
\ref{FlowSDE}) and $q>1$ an integer. Then there exists a constant $%
C=C(d,\llVert b\rrVert_{\infty},q)<\infty$ such that
\[
E \bigl[ \bigl\llvert X_{t_{1}}^{s_{1},x_{1}}-X_{t_{2}}^{s_{2},x_{2}}
\bigr\rrvert^{q} \bigr] \leq C\bigl(\llvert s_{1}-s_{2}
\rrvert^{q/2}+\llvert t_{1}-t_{2}\rrvert
^{q/2}+\llvert x_{1}-x_{2}\rrvert^{q}
\bigr)
\]
for all $s_{1},s_{2},t_{1},t_{2},x_{1},x_{2}$.

In particular, there exists a continuous version of the random field\break $%
(s,t,x)\longmapsto X_{t}^{s,x}$ with H\"{o}lder continuous trajectories of
H\"older constant $\alpha<\frac{1}{2}$ in $s,t$ and $\alpha<1$ in $%
x $, locally (see \cite{Kunita}).
\end{corollary}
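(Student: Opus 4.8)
The plan is to derive the moment estimate for increments of the solution field $(s,t,x) \mapsto X_t^{s,x}$ by splitting the increment into three pieces --- one controlling the dependence on the starting point $x$, one controlling the dependence on the initial time $s$, and one controlling the dependence on the terminal time $t$ --- and bounding each contribution by the indicated power of the relevant parameter difference. Once the estimate
\begin{equation*}
E\left[ \left\vert X_{t_1}^{s_1,x_1} - X_{t_2}^{s_2,x_2} \right\vert^q \right] \leq C\left( \vert s_1 - s_2 \vert^{q/2} + \vert t_1 - t_2 \vert^{q/2} + \vert x_1 - x_2 \vert^q \right)
\end{equation*}
is in hand, the existence of a jointly H\"older continuous version with the stated exponents follows immediately from the Kolmogorov--Chentsov continuity theorem in the parameter space $\mathbb{R} \times \mathbb{R} \times \mathbb{R}^d$ (since the right-hand side is of the form $C \rho((s_1,t_1,x_1),(s_2,t_2,x_2))^{q}$ up to the usual scaling, with $\rho$ a suitable metric, and $q$ can be taken arbitrarily large); this is the routine part, so I would focus on the moment estimate.

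For the $x$-dependence, with $s,t$ fixed, I would write $X_t^{s,x_1} - X_t^{s,x_2} = (x_1 - x_2) + \int_s^t \left( b(u, X_u^{s,x_1}) - b(u, X_u^{s,x_2}) \right) du$ for smooth compactly supported $b$, and estimate $E\left[ \vert X_t^{s,x_1} - X_t^{s,x_2} \vert^q \right]$ by first observing that $x \mapsto X_t^{s,x}$ is differentiable with $\partial_x X_t^{s,x}$ solving the linearized equation \eqref{linearizedeq}, so that $\vert X_t^{s,x_1} - X_t^{s,x_2} \vert \leq \vert x_1 - x_2 \vert \sup_{x} \vert \partial_x X_t^{s,x} \vert$ along the segment joining $x_1$ and $x_2$; taking $q$-th moments and invoking Lemma \ref{sobolevestimate} gives $E\left[ \vert X_t^{s,x_1} - X_t^{s,x_2} \vert^q \right] \leq C_{d,q}(\Vert b \Vert_\infty) \vert x_1 - x_2 \vert^q$, with a constant depending only on $d$, $q$ and $\Vert b \Vert_\infty$ --- crucially not on $b'$. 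Then one passes to general bounded measurable $b$ by the approximation/weak-compactness argument already set up for Proposition \ref{MainProposition}: the constant is uniform along the approximating sequence, so the bound survives in the limit.

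For the $t$-dependence (say $s, x$ fixed, $t_1 < t_2$), I would use the SDE directly: $X_{t_2}^{s,x} - X_{t_1}^{s,x} = \int_{t_1}^{t_2} b(u, X_u^{s,x})\, du + (B_{t_2} - B_{t_1})$, so $\vert X_{t_2}^{s,x} - X_{t_1}^{s,x} \vert \leq \Vert b \Vert_\infty \vert t_2 - t_1 \vert + \vert B_{t_2} - B_{t_1} \vert$, and taking $q$-th moments, together with $E[\vert B_{t_2} - B_{t_1}\vert^q] = C_q \vert t_2 - t_1 \vert^{q/2}$ and $\vert t_2 - t_1 \vert^q \leq C \vert t_2 - t_1 \vert^{q/2}$ on bounded sets (or by using a localization/global constant argument), yields the $\vert t_1 - t_2 \vert^{q/2}$ term. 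The $s$-dependence is handled symmetrically, comparing $X_t^{s_1,x}$ and $X_t^{s_2,x}$ by using the flow/semigroup-type relation and the fact that, over the short time interval between $s_1$ and $s_2$, the solution moves by at most $\Vert b \Vert_\infty \vert s_1 - s_2 \vert$ plus a Brownian increment of size $\vert s_1 - s_2 \vert^{1/2}$ in $L^q$, then using the already-established $x$-Lipschitz estimate to propagate this displacement forward to time $t$ without enlarging the power. Finally one combines the three estimates via the triangle inequality and the elementary inequality $(a+b+c)^q \leq 3^{q-1}(a^q + b^q + c^q)$.

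The main obstacle, as I see it, is not any single estimate but making the constant genuinely uniform --- i.e., depending only on $d$, $\Vert b \Vert_\infty$ and $q$ and not on the smoothness of $b$ --- throughout the $x$-increment bound, since that is the only place where the regularity of $b$ could a priori enter; this is exactly where Lemma \ref{sobolevestimate} does the essential work, and where one must be careful that the approximation argument (smooth compactly supported $b_n \to b$) preserves the bound in the limit. The $s$- and $t$-increment bounds, by contrast, use only $\Vert b \Vert_\infty$ and standard Brownian moment estimates and present no difficulty.
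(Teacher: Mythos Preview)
Your proposal is correct and follows essentially the same route as the paper: approximate $b$ by smooth compactly supported $b_n$, use Lemma \ref{sobolevestimate} via the mean value theorem for the $x$-increment, boundedness of $b$ plus Brownian moments for the $t$-increment, and the Markov/flow property combined with the $x$-estimate for the $s$-increment, then pass to the limit (the paper uses Fatou along a.e.\ convergent subsequences). One small point to make precise: write the $x$-increment in integral form $X_t^{s,x_1}-X_t^{s,x_2}=\int_0^1 \partial_x X_t^{s,x_1+\tau(x_2-x_1)}(x_1-x_2)\,d\tau$ and apply Jensen in $\tau$ \emph{before} taking expectations, so that you only need $\sup_x E[|\partial_x X_t^{s,x}|^q]$ (which is what Lemma \ref{sobolevestimate} provides) rather than $E[\sup_x|\partial_x X_t^{s,x}|^q]$, which your pathwise-sup phrasing would require but is not available.
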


\begin{pf}
Retain the above notation. Without loss of generality, let $0\leq
s_{1}<s_{2}<t_{1}<t_{2}$. Then
\begin{eqnarray*}
&&X_{t_{1}}^{n,s_{1},x_{1}}-X_{t_{2}}^{n,s_{2},x_{2}}
\\
&& \qquad =x_{1}-x_{2}+\int_{s_{1}}^{t_{1}}b_{n}
\bigl(u,X_{u}^{n,s_{1},x_{1}}\bigr)\,du-%
\int
_{s_{2}}^{t_{2}}b_{n}\bigl(u,X_{u}^{n,s_{2},x_{2}}
\bigr)\,du
\\
&&\quad\qquad{}+(B_{t_{1}}-B_{s_{1}})-(B_{t_{2}}-B_{s_{2}})
\\
&&\qquad =x_{1}-x_{2}+\int_{s_{1}}^{s_{2}}b_{n}
\bigl(u,X_{u}^{n,s_{1},x_{1}}\bigr)\,du+%
\int
_{s_{2}}^{t_{1}}b_{n}\bigl(u,X_{u}^{n,s_{1},x_{1}}
\bigr)\,du
\\
&&\quad\qquad{}-\int_{s_{2}}^{t_{1}}b_{n}
\bigl(u,X_{u}^{n,s_{2},x_{2}}\bigr)\,du-%
\int
_{t_{1}}^{t_{2}}b_{n}\bigl(u,X_{u}^{n,s_{2},x_{2}}
\bigr)\,du
\\
&&\quad\qquad{} +(B_{t_{1}}-B_{t_{2}})+(B_{s_{2}}-B_{s_{1}})
\\
&&\qquad =x_{1}-x_{2}+\int_{s_{1}}^{s_{2}}b_{n}
\bigl(u,X_{u}^{n,s_{1},x_{1}}\bigr)\,du-%
\int
_{t_{1}}^{t_{2}}b_{n}\bigl(u,X_{u}^{n,s_{2},x_{2}}
\bigr)\,du
\\
&&\quad\qquad{} +\int_{s_{2}}^{t_{1}}
\bigl(b_{n}\bigl(u,X_{u}^{n,s_{1},x_{1}}
\bigr)-b_{n}\bigl(u,X_{u}^{n,s_{1},x_{2}}\bigr)\bigr)\,du
\\
&&\quad\qquad{} +\int_{s_{2}}^{t_{1}}
\bigl(b_{n}\bigl(u,X_{u}^{n,s_{1},x_{2}}
\bigr)-b_{n}\bigl(u,X_{u}^{n,s_{2},x_{2}}\bigr)\bigr)\,du
\\
&&\quad\qquad{} +(B_{t_{1}}-B_{t_{2}})+(B_{s_{2}}-B_{s_{1}}).
\end{eqnarray*}
So, due to the uniform boundedness of $b_{n},n\geq1$, we get
%
%
%e18 #&#
\begin{eqnarray} \label{Eq1}
&&E\bigl[\bigl\llvert X_{t_{1}}^{n,s_{1},x_{1}}-X_{t_{2}}^{n,s_{2},x_{2}}
\bigr\rrvert^{q}\bigr]
\nonumber
\\
&&\qquad \leq C_{q}\biggl(\llvert x_{1}-x_{2}\rrvert
^{q}+\llvert s_{1}-s_{2}\rrvert
^{q/2}+\llvert t_{1}-t_{2}\rrvert
^{q/2}
\nonumber\\[-8pt]\\[-8pt]
&&\hspace*{50pt}{}+ E\biggl[\biggl\llvert\int_{s_{2}}^{t_{1}}
\bigl(b_{n}\bigl(u,X_{u}^{n,s_{1},x_{1}}
\bigr)-b_{n}\bigl(u,X_{u}^{n,s_{1},x_{2}}\bigr)\bigr)\,du
\biggr\rrvert^{q}\biggr]
\nonumber
\\
&&\hspace*{50pt}{}+E\biggl[\biggl\llvert\int_{s_{2}}^{t_{1}}
\bigl(b_{n}\bigl(u,X_{u}^{n,s_{1},x_{2}}
\bigr)-b_{n}\bigl(u,X_{u}^{n,s_{2},x_{2}}\bigr)\bigr)\,du
\biggr\rrvert^{q}\biggr]\biggr).\nonumber
\end{eqnarray}
Using the fact that $X_{t}^{n,\cdot,s}$ is a stochastic flow of
diffeomorphisms (see, e.g., \cite{Kunita}), the mean value theorem and
Proposition %
\ref{sobolevestimate}, we get
%
%
%e19 #&#
\begin{eqnarray} \label{Eq2}
&&E\biggl[\biggl\llvert\int_{s_{2}}^{t_{1}}
\bigl(b_{n}\bigl(u,X_{u}^{n,s_{1},x_{1}}
\bigr)-b_{n}\bigl(u,X_{u}^{n,s_{1},x_{2}}\bigr)\bigr)\,du
\biggr\rrvert^{q}\biggr]\nonumber
\\
&&\qquad =\llvert x_{1}-x_{2}\rrvert^{q}\nonumber
\\
&&\quad\qquad{}\times E\biggl[\biggl\llvert\int_{s_{2}}^{t_{1}}\int
_{0}^{1}\biggl(b_{n}^{\shortmid }\bigl(u,X_{u}^{n,s_{1},x_{1}+\tau(x_{2}-x_{1})}\bigr)\frac{\partial
}{\partial x}X_{u}^{n,s_{1},x_{1}+\tau(x_{2}-x_{1})}\biggr)\,d\tau \,du\biggr\rrvert^{q}
\biggr]\nonumber
\\
&&\qquad \leq\llvert x_{1}-x_{2}\rrvert^{q}\nonumber
\\
&&\quad\qquad{}\times \int
_{0}^{1}E\biggl[\biggl\llvert\int
_{s_{2}}^{t_{1}}\biggl(b_{n}^{\shortmid}
\bigl(u,X_{u}^{n,s_{1},x_{1}+\tau
(x_{2}-x_{1})}\bigr)\frac{\partial}{\partial x}X_{u}^{n,s_{1},x_{1}+\tau
(x_{2}-x_{1})}
\biggr)\,du\biggr\rrvert^{q}\biggr]\,d\tau
\\
&&\qquad =\llvert x_{1}-x_{2}\rrvert^{q}\nonumber
\\
&&\quad\qquad{}\times \int_{0}^{1}E\biggl[\biggl\llvert
\frac{\partial}{\partial x}X_{t_{1}}^{n,s_{1},x_{1}+\tau(x_{2}-x_{1})}-\frac
{\partial}{\partial x}X_{s_{2}}^{n,s_{1},x_{1}+\tau
(x_{2}-x_{1})}
\biggr\rrvert^{q}\biggr]\,d\tau\nonumber
\\
&&\qquad \leq C_{q}\llvert x_{1}-x_{2}\rrvert
^{q}\sup_{t\in{}[
s_{1},1],x\in\mathbb{R}^{d}}E\biggl[\biggl\llvert
\frac{\partial}{\partial x}%
X_{t_{1}}^{n,s_{1},x}\biggr\rrvert
^{q}\biggr]
\nonumber
\\
&&\qquad \leq C_{d,q}\bigl(\llVert b\rrVert_{\infty}\bigr)\llvert
x_{1}-x_{2}\rrvert^{q}.\nonumber
\end{eqnarray}
Finally, we observe that estimation of the last term of the right-hand side
of (\ref{Eq1}) can be reduced to the previous case (\ref{Eq2}) by applying
the Markov property, since
\begin{eqnarray*}
&& E\biggl[\biggl\llvert\int_{s_{2}}^{t_{1}}
\bigl(b_{n}\bigl(u,X_{u}^{n,s_{1},x_{2}}
\bigr)-b_{n}\bigl(u,X_{u}^{n,s_{2},x_{2}}\bigr)\bigr)\,du
\biggr\rrvert^{q}\biggr]
\\
&&\qquad \leq \int_{s_{2}}^{t_{1}}E\bigl[\bigl\llvert
b_{n}\bigl(u,X_{u}^{n,s_{1},x_{2}}\bigr)-b_{n}
\bigl(u,X_{u}^{n,s_{2},x_{2}}\bigr)\bigr\rrvert^{q}
\bigr]\,du
\\
&&\qquad =\int_{s_{2}}^{t_{1}}E\bigl[ E\bigl[\bigl
\llvert b_{n}\bigl(u,X_{u}^{n,s_{2},y}
\bigr)-b_{n}\bigl(u,X_{u}^{n,s_{2},x_{2}}\bigr)\bigr
\rrvert^{q}\bigr]\rrvert_{y=X_{s_{2}}^{n,s_{1},x_{2}}}\bigr]\,du
\\
&&\qquad \leq CE\bigl[\bigl\llvert X_{s_{2}}^{n,s_{1},x_{2}}-x_{2}
\bigr\rrvert^{q}\bigr]=CE\bigl[\bigl\llvert
X_{s_{2}}^{n,s_{1},x_{2}}-X_{s_{1}}^{n,s_{1},x_{2}}
\bigr\rrvert^{q}\bigr]
\\
&&\qquad \leq M_{q}\llvert s_{2}-s_{1}\rrvert
^{q/2}
\end{eqnarray*}
for a positive constant $M_{q}<\infty$.

Therefore, we have
\[
E\bigl[\bigl\llvert X_{t_{1}}^{n,s_{1},x_{1}}-X_{t_{2}}^{n,s_{2},x_{2}}
\bigr\rrvert^{q}\bigr]\leq C_{q}\bigl(\llvert
s_{1}-s_{2}\rrvert^{q/2}+\llvert
t_{1}-t_{2}\rrvert^{q/2}+\llvert
x_{1}-x_{2}\rrvert^{q}\bigr)
\]
for a constant $C_{q}$ independent of $n$.

To complete the proof of the corollary, we use the fact that
$X_{t_{1}}^{n,s_{1},x_{1}}%
\rightarrow X_{t_{1}}^{s_{1},x_{1}}$ and $X_{t_{2}}^{n,s_{2},x_{2}}%
\rightarrow X_{t_{2}}^{s_{2},x_{2}}$ strongly in $L^{2}(\Omega; \mathbb
{R}^d )$ as $n\rightarrow
\infty$ (Theorem~\ref{th13}),
%(see \cite{PMNPZ})
together with Fatou's lemma applied to a.e.
convergent subsequences of $\{X_{t_{1}}^{n,s_{1},x_{1}}\}_{n=1}^{\infty}$
and $\{X_{t_{2}}^{n,s_{2},x_{2}}\}_{n=1}^{\infty}$.
\end{pf}

This concludes step one of our program. We next proceed to step~2.

For simplicity, we consider $s=0$, that is, we look at the sequence\break $
\{ X_t^{n,x} \}_{n \geq1}: = \{ X_t^{n,0,x} \}_{n \geq1}$ and $X_t^x:= X_t^{0,x} $.
The following lemma establishes convergence of the above sequence.

%
%le15 #&#
\begin{lemma}
\label{compactness} For any $\varphi\in C_{0}^{\infty}(\mathbb{R}^{d};
\mathbb{R}^{d})$ and $t\in{}[0,1]$, the sequence
\[
\bigl\langle X_{t}^{n},\varphi\bigr\rangle=\int
_{\mathbb{R}^{d}}\bigl\langle X_{t}^{n,x},\varphi(x)
\bigr\rangle_{\mathbb{R}^{d}}\,dx
\]
converges to $\langle X_{t},\varphi\rangle$ in $L^{2}(\Omega,\mathbb
{R} )$.
\end{lemma}

\begin{pf}
Denote by $D_{s}$ the Malliavin derivative (see the \hyperref[sec4]{Appendix}) and by
$U$ the
compact support of $\varphi$. By noting the inequalities
\[
E\bigl[\bigl|D_{s}\bigl\langle X_{t}^{n},\varphi
\bigr\rangle\bigr|^{2}\bigr]=E\bigl[\bigl|\bigl\langle D_{s}X_{t}^{n},
\varphi\bigr\rangle\bigr|^{2}\bigr]
\leq\Vert\varphi\Vert_{L^{2}(\mathbb{R}^{d})}^{2}|U|\sup_{x\in
U}E
\bigl[\bigl|D_{s}X_{t}^{n,x}\bigr|^{2}\bigr]
\]
and
\begin{eqnarray*}
&& E\bigl[\bigl|D_{s}\bigl\langle X_{t}^{n},\varphi
\bigr\rangle_{L^{2}(\mathbb{R}%
^{d})}-D_{s^{\prime}}\bigl\langle X_{t}^{n},
\varphi\bigr\rangle\bigr|^{2}\bigr]
\\
&&\qquad =E\bigl[\bigl|\bigl\langle D_{s}X_{t}^{n}-D_{s^{\prime}}X_{t}^{n},
\varphi\bigr\rangle\bigr|^{2}\bigr]
\\
&&\qquad \leq\Vert\varphi\Vert_{L^{2}(\mathbb{R}^{d})}^{2}|U|\sup_{x\in
U}E
\bigl[\bigl|D_{s}X_{t}^{n,x}-D_{s^{\prime}}X_{t}^{n,x}\bigr|^{2}
\bigr]
\end{eqnarray*}
we can invoke Corollary~\ref{compactcrit} in the \hyperref[sec4]{Appendix}
%together with Lemma 3.5 \cite% {PMNPZ}
to obtain a subsequence $\langle X_{t}^{n(k)},\varphi\rangle
$ converging in $L^{2}(\Omega,\mathbb{R})$ as $%
k \rightarrow\infty$. Denote the limit by $Y(\varphi)$.

Similar to the proof of Lemma~\ref{WeakConvergenceOfSolutions} one can
show that $E[ \langle X_{t}^{n},\varphi\rangle\mathcal{E}\times \break(\int_0^1
h(u) \,dB_u) ]$ converges to $E[ \langle X_{t},\varphi\rangle\mathcal
{E}(\int_0^1 h(u) \,dB_u) ]$ for all $h \in C_b^1(\mathbb{R}; \mathbb
{R}^d)$. We then get that $\langle X_{t}^{n},\varphi\rangle$ converges
weakly to $\langle X_{t},\varphi\rangle$, and so, by uniqueness of the
limits, we
can conclude that
\[
Y(\varphi)=\langle X_{t},\varphi\rangle.
\]
To see that the full sequence converges, we assume that there exist an
$\varepsilon> 0$ and a subsequence $\langle X_{t}^{n(k)},\varphi\rangle
$ such that
\[
\bigl\| \bigl\langle X_{t}^{n(k)},\varphi\bigr\rangle- \langle
X_{t},\varphi\rangle\bigr\| \geq\varepsilon%
\]
for every $k$. Applying the above procedure to $\langle
X_{t}^{n(k)},\varphi\rangle$ gives a further subsequence converging
to $\langle X_{t},\varphi\rangle$ thus giving a contradiction.
\end{pf}

We are now able to finalize the proof of Proposition~\ref{MainProposition}.

\begin{pf*}{Proof of Proposition \protect\ref{MainProposition}}
Using Proposition~\ref{sobolevestimate}, we have
\[
\sup_{n} \sup_{x \in\mathbb{R}^d}E \biggl[ \biggl|
\frac{\partial
}{\partial x}X_{t}^{n,x} \biggr|^{p} \biggr] <
\infty.
\]
Hence, there exists a subsequence of $\frac{\partial}{\partial x}%
X_{t}^{n(k),x}$ converging in the weak
topology of $L^{2}(\Omega,L^{p}(U))$ to an element $Y$. Then we have for
any $A\in\mathcal{F}$ and $\varphi\in C_{0}^{\infty}(U; \mathbb{R}^d)$
\begin{eqnarray*}
E\bigl[1_{A}\bigl\langle X_{t},\varphi^{\prime}
\bigr\rangle\bigr]&=&\lim_{k\rightarrow
\infty
}E\bigl[1_{A}\bigl
\langle X_{t}^{n(k)},\varphi^{\prime}\bigr\rangle\bigr]
\\
&=& -\lim_{k\rightarrow\infty}E\biggl[1_{A}\biggl\langle
\frac{\partial}{\partial
x}%
X_{t}^{n(k)},\varphi\biggr
\rangle\biggr]=-E\bigl[1_{A}\langle Y,\varphi\rangle\bigr].
\end{eqnarray*}
Hence, we have for $\varphi\in C_{0}^{\infty}$:
%
%
%e20 #&#
\begin{equation}
\label{weakDerivative} \bigl\langle X_{t},\varphi^{\prime}\bigr
\rangle=-\langle Y,\varphi\rangle
\end{equation}
$P$-a.s. Finally, we need to show that there exists a measurable set
$\Omega_0 \subset\Omega$ with full measure such that $X_t^{\cdot} $
has a weak derivative on this subset. To this end, choose a sequence
$\{ \varphi_n \}$ in $C^{\infty}(U;\mathbb{R}^d)$ dense in
$W^{1,2}_0(U;\mathbb{R}^d)$. Choose a measurable subset $\Omega_n$ of
$\Omega$ with full measure such that (\ref{weakDerivative}) holds on
$\Omega_n$ with $\varphi$ replaced by $\varphi_n$. Then $\Omega_0: =
\bigcap_{n \geq1}\Omega_n$ satisfies the desired property.
\end{pf*}

We now return to the weighted Sobolev spaces. Using the same techniques as
in the above lemma, we prove the following.

%
%le16 #&#
\begin{lemma}
\label{weightedsobolev} For all $p \in(1, \infty)$, we have
\[
X_t^{\cdot} \in L^2\bigl(\Omega, W^{1,p}
\bigl(\mathbb{R}^d, w\bigr)\bigr).
\]
\end{lemma}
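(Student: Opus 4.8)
The plan is to verify separately that $X_{t}^{\cdot }$ and its spatial weak derivative both lie in $L^{2}(\Omega ;L^{p}(\mathbb{R}^{d},w))$. For $X_{t}^{\cdot }$ itself I would just use the elementary bound $|X_{t}^{s,x}|^{p}\leq c_{p}(|x|^{p}+|t-s|^{p}\Vert b\Vert _{\infty }^{p}+|B_{t}-B_{s}|^{p})$ implied by \eqref{FlowSDE}: integrating it against $w\,dx$, using the weight condition \eqref{WeightCond} to see that $\int_{\mathbb{R}^{d}}|x|^{p}w(x)\,dx$ and $W:=\int_{\mathbb{R}^{d}}w(x)\,dx$ are finite, then raising to the power $2/p$ and taking expectations (the only random contribution being a finite moment of $|B_{t}-B_{s}|$), gives $E[\Vert X_{t}^{\cdot }\Vert _{L^{p}(\mathbb{R}^{d},w)}^{2}]<\infty $.

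For the weak derivative the idea is to route the estimates underlying Proposition \ref{MainProposition} across all of $\mathbb{R}^{d}$ through the finite measure $w\,dx$. By Proposition \ref{MainProposition}, the weak spatial derivative $Y:=\frac{\partial }{\partial x}X_{t}^{\cdot }$ is a well-defined measurable $\mathbb{R}^{d\times d}$-valued function on $\Omega \times \mathbb{R}^{d}$ with $Y\in L^{2}(\Omega ;L^{p}(U))$ for every bounded open $U$, and, restricted to $U$, $Y$ is the weak $L^{2}(\Omega ;L^{p}(U))$-limit of the Jacobians $\frac{\partial }{\partial x}X_{t}^{n(k),\cdot }$ of the smooth compactly supported approximations; by a diagonal argument over the balls $U_{m}=B(0,m)$ I would fix one subsequence $\{n(k)\}$ realizing this for all $m$ simultaneously. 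The key point is then the pointwise-in-$x$ moment bound
\[
\operatorname*{ess\,sup}_{x\in \mathbb{R}^{d}}E\big[\,|Y(x)|^{q}\,\big]\leq C_{d,q}(\Vert b\Vert _{\infty })\qquad \text{for all }q\in \lbrack 1,\infty ).
\]
To prove it, fix $x_{0}$ and a small ball $B=B(x_{0},\rho )$; testing the above weak convergence against $\frac{1}{|B|}\mathbf{1}_{B}\otimes \mathbf{1}_{A}$, $A\in \mathcal{F}$, shows that $\frac{1}{|B|}\int_{B}\frac{\partial }{\partial x}X_{t}^{n(k),x}\,dx$ converges weakly in $L^{2}(\Omega )$ to $\frac{1}{|B|}\int_{B}Y(x)\,dx$; since, by Jensen's inequality and Lemma \ref{sobolevestimate}, the former is bounded in $L^{q}(\Omega )$ uniformly in $k$ and $\rho $ by $\big(\frac{1}{|B|}\int_{B}E[|\frac{\partial }{\partial x}X_{t}^{n(k),x}|^{q}]\,dx\big)^{1/q}\leq C_{d,q}(\Vert b\Vert _{\infty })^{1/q}$, weak lower semicontinuity of the $L^{q}(\Omega )$-norm gives $E\big[|\frac{1}{|B|}\int_{B}Y(x)\,dx|^{q}\big]\leq C_{d,q}(\Vert b\Vert _{\infty })$, and letting $\rho \downarrow 0$ and combining the (vector-valued) Lebesgue differentiation theorem with Fatou's lemma yields the asserted bound for a.e.\ $x_{0}$.

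Granting this, the conclusion is routine: with $q=p$ when $p\geq 2$ and $q=2$ when $1<p<2$, a single application of Jensen's inequality (for $p\geq 2$) or of H\"older's inequality against the finite measure $w\,dx$ (for $1<p<2$), together with Fubini's theorem, bounds $E[\Vert Y\Vert _{L^{p}(\mathbb{R}^{d},w)}^{2}]=E\big[\big(\int_{\mathbb{R}^{d}}|Y(x)|^{p}w(x)\,dx\big)^{2/p}\big]$ by a constant multiple of $C_{d,q}(\Vert b\Vert _{\infty })\,W^{2/p}<\infty $. Combined with the first part this gives $X_{t}^{\cdot }\in L^{2}(\Omega ;W^{1,p}(\mathbb{R}^{d},w))$.

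I expect the main obstacle to be the middle step. Since $Y$ is produced only as a weak limit it is not defined pointwise in $x$, so the pointwise-in-$x$ estimates of Lemma \ref{sobolevestimate} cannot be passed to the limit directly; routing the argument through the ball-averages $\frac{1}{|B|}\int_{B}\,\cdot \,dx$ and then through Lebesgue differentiation is what makes the transfer legitimate. One should also make sure that the unboundedness of the domain $\mathbb{R}^{d}$ and of the weight $w$ never enter — this is exactly why the weighted norm is controlled via the pointwise moment bound and the integrability \eqref{WeightCond} of $w$, rather than by weak compactness directly in $L^{2}(\Omega ;L^{p}(\mathbb{R}^{d},w))$ followed by an identification of limits within the weighted duality.
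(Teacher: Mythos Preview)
Your argument is correct, but it takes a different route from the paper's. The paper simply reruns the weak-compactness argument of Proposition~\ref{MainProposition} in the weighted space: using Lemma~\ref{sobolevestimate} it bounds $\sup_{x}E[|\tfrac{\partial}{\partial x}X_{t}^{n,x}|^{p}]$ uniformly in $n$, then combines this with $\int w<\infty$ (via H\"older or Jensen, exactly as in your final step) to get a uniform bound on $\tfrac{\partial}{\partial x}X_{t}^{n,\cdot}$ in $L^{2}(\Omega;L^{p}(\mathbb{R}^{d},w))$, extracts a weak subsequential limit $Y$ in that reflexive space, and identifies $Y$ with the weak derivative of $X_{t}^{\cdot}$ by testing against functions $f$ with $fw\in C_{0}^{\infty}$, which reduces the identification to Proposition~\ref{MainProposition}. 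In other words, the paper does precisely the ``weak compactness directly in $L^{2}(\Omega;L^{p}(\mathbb{R}^{d},w))$ followed by an identification of limits'' that you chose to avoid.

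Your route---transferring the pointwise moment bound of Lemma~\ref{sobolevestimate} from the approximating Jacobians to the limiting weak derivative $Y$ via ball-averaging, weak lower semicontinuity of the $L^{q}(\Omega)$-norm, and Lebesgue differentiation plus Fatou---is a bit longer but has the merit of producing the intermediate estimate $\operatorname*{ess\,sup}_{x}E[|Y(x)|^{q}]\leq C_{d,q}(\Vert b\Vert_{\infty})$ for the weak derivative itself, which is not explicitly stated in the paper and could be independently useful. The paper's approach is shorter because the uniform-in-$n$ pointwise bound is applied \emph{before} passing to the limit, so no transfer step is needed.
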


\begin{pf}
For simplicity, we consider the case $d=1$. It suffices to show that
$E[(\int| \frac{\partial}{%
\partial x} X_t^x |^p w(x) \,dx)^{2/p}] < \infty$. To this end, let $X_t^{n,x}$
denote the sequence approximating $X_t^x$ as in the previous lemma. Assume
first that $p \geq2$. Then by H\"{o}lder's inequality w.r.t. the
Wiener measure $\mu$, we have
\begin{eqnarray*}
&& E\biggl[\biggl(\int\biggl|\frac{\partial}{\partial x} X_t^{n,x}
\biggr|^p w(x) \,dx \biggr)^{2/p}\biggr]
\\
&&\qquad \leq\biggl(E\int\biggl|\frac{\partial}{\partial x} X_t^{n,x}
\biggr|^p w(x) \,dx \biggr)^{2/p}
\\
&&\qquad \leq\biggl(\int w(x) \,dx
\biggr)^{p/2} \biggl(\sup_{x \in\mathbb{R}} E \biggl| \frac{\partial
}{\partial x}
X_t^{n,x} \biggr|^p \biggr)^{2/p}.
\end{eqnarray*}
For $1 < p \leq2$, by H\"{o}lder's inequality w.r.t. $w(x)\,dx$, we have
\[
E\biggl[\biggl(\int\biggl|\frac{\partial}{\partial x} X_t^{n,x}
\biggr|^p w(x) \,dx \biggr)^{2/p}\biggr] \leq\biggl(\int w(x)\,dx
\biggr)^{(4-p)/2} \sup_{x \in\mathbb{R}^d} E\biggl[\biggl| \frac{\partial}{%
\partial x}
X_t^{n,x} \biggr|^2\biggr].
\]
In both cases, we can find a subsequence of $\frac{\partial}{\partial x} X^{n,x}_t$ converging to an element $Y \in
L^2(\Omega, L^p(\mathbb{R}^d,w))$ in the weak topology. In particular for
every $A \in\mathcal{F}$ and $f \in L^q(\mathbb{R}^d,w)$ ($q$ is the
Sobolev conjugate of $p$) we have
\[
\lim_{k \rightarrow\infty} E\biggl[ 1_A \int\frac{\partial}{\partial x}
X_t^{n(k),x} f(x) w(x) \,dx\biggr] = E\biggl[ 1_A
\int Y(x) f(x) w(x) \,dx\biggr]
\]
by choosing $f$ such that $fw \in L^q(\mathbb{R}, dx)$ [e.g., put $f(x) =
e^{-w(x)}\varphi(x)$ for $\varphi\in C_0^{\infty}(\mathbb{R})$]. It
follows that
%from the previous theorem that
$Y$ must coincide with the weak derivative of $%
X_t^x$. This proves the lemma.
\end{pf}

We now complete the proof of our main theorem in this section (Theorem~\ref{MainTheorem}) and its
corollary.

\begin{pf*}{Proof of Theorem \protect\ref{MainTheorem}}
Denote by $\mathbb{R}\times\mathbb{R}\times\mathbb{R}^{d}\ni
(s,t,x)\longmapsto\phi_{s,t}(x)\in\mathbb{R}^{d}$ the continuous version
of the solution map $(s,t,x)\longmapsto X_{t}^{s,x}$ provided by
Corollary \ref%
{Holder}. Let $\Omega^{\ast}$ be the set of all $\omega\in\Omega$
such that the SDE (\ref{FlowSDE}) has a unique spatially Sobolev
differentiable family
of solutions. Then by completeness of the probability space $(\Omega,%
\mathcal{F},\mu)$, it follows that $\Omega^{\ast}\in\mathcal{F}$
and $%
\mu(\Omega^{\ast})=1$. Furthermore, by uniqueness of solutions of
the SDE
(\ref{FlowSDE}), it is easy to check that the following two-parameter group
property
%
%
%e21 #&#
\begin{equation}
\phi_{s,t}(\cdot,\omega)=\phi_{u,t}(\cdot,\omega)\circ\phi
_{s,u}(\cdot,\omega),\qquad\phi_{s,s}(x,\omega)=x,
\label{Group}
\end{equation}
holds for all $s,u,t\in\mathbb{R}$, all $x\in\mathbb{R}^{d}$ and all $
\omega\in\Omega^{\ast}$. Finally, we apply Lemma~\ref{weightedsobolev}
and use the relation $\phi_{s,t}(\cdot,\omega)=\phi_{t,s}^{-1}(\cdot,\omega)$, to complete the proof of the theorem.
\end{pf*}

\begin{pf*}{Proof of Corollary \ref{CorollaryAut}}
Let $\Omega^{\ast}$ denote the set of full Wiener measure introduced in
the above proof of Theorem~\ref{MainTheorem}. We claim that $\theta
(t,\cdot
)(\Omega^{\ast})=\Omega^{\ast}$ for all $t\in\mathbb{R}$. To see this,
let $\omega\in\Omega^{\ast}$ and fix an arbitrary $t_{1}\in\mathbb{R}$.
Then from the autonomous SDE (\ref{SDEAut}) it follows that
%
%
%e22 #&#
\begin{equation}
\qquad X_{t+t_{1}}^{t_{1},x}(\omega)=x+\int_{t_{1}}^{t+t_{1}}b
\bigl(X_{u}^{t_{1},x}(\omega)\bigr)\,du+B_{t+t_{1}}(\omega
)-B_{t_{1}}(\omega),\qquad t_{1},t\in\mathbb{R}. \label{SDECor1}
\end{equation}
By the helix property of $B$ and a simple change of variable the above
relation implies
%it follows that
%
%
%e23 #&#
\begin{equation}
X_{t+t_{1}}^{t_{1},x}(\omega)=x+\int_{0}^{t}b
\bigl(X_{u+t_{1}}^{t_{1},x}(\omega)\bigr)\,du+B_{t}\bigl(
\theta\bigl(t_{1}(\omega)\bigr)\bigr),\qquad t\in\mathbb{R}. \label{SDECor2}
\end{equation}
The above relation implies that the SDE (\ref{SDEAut}) admits a Sobolev
differentiable family of solutions when $\omega$ is replaced by $\theta
(t_{1},\omega)$. Hence, $\theta(t_{1},\omega)\in\Omega^{\ast}$.
Thus $%
\theta(t_{1},\cdot)(\Omega^{\ast})\subseteq\Omega^{\ast}$, and
since $%
t_{1}\in\mathbb{R}$ is arbitrary, this proves our claim. Furthermore, using
uniqueness in the integral equation (\ref{SDECor1}) it follows that
%
%
%e24 #&#
\begin{equation}
X_{t_{2}+t_{1}}^{t_{1},x}(\omega)=X_{t_{2}}^{0,x}
\bigl(\theta(t_{1},\omega)\bigr) \label{Theta}
\end{equation}
for all $t_{1},t_{2}\in\mathbb{R}$, all $x\in\mathbb{R}^{d}$ and
$\omega
\in\Omega^{\ast}$. To prove the following cocycle property for all $%
\omega\in\Omega^{\ast}$:
\[
\phi_{0,t_{1}+t_{2}}(\cdot,\omega)=\phi_{0,t_{2}}\bigl(\cdot,\theta
(t_{1},\omega)\bigr)\circ\phi_{0,t_{1}}(\cdot,\omega)
\]
we rewrite the identity (\ref{Theta}) in the form
%
%
%e25 #&#
\begin{equation}
\phi_{t_{1},t_{1}+t_{2}}(x,\omega)=\phi_{0,t_{2}}\bigl(x,\theta
(t_{1},\omega)\bigr),\qquad t_{1},t_{2}\in\mathbb{R},
x\in\mathbb{R}^{d},\omega\in\Omega^{\ast},
\end{equation}
replace $x$ by $\phi_{0,t_{1}}(x,\omega)$ in the above
identity and invoke the two-parameter flow property (\ref{Group}). This
completes the proof of Corollary~\ref{CorollaryAut}.
\end{pf*}

Finally, we give an extension of Theorem~\ref{MainTheorem} to a class of
nondegenerate \mbox{$d$-}dimensional It\^{o}-diffusions.

%
%th17 #&#
\begin{theorem}
\label{generalsde}Consider the time-homogeneous $\mathbb{R}^{d}$-valued SDE
%
%
%e26 #&#
\begin{equation}
dX_{t}^{x}=b\bigl(X_{t}^{x}\bigr)\,dt+
\sigma\bigl(X_{t}^{x}\bigr)\,dB_{t},\qquad
X_{0}^{x}=x\in\mathbb{R}^{d}, 0\leq t
\leq1, \label{SDEDiffusion}
\end{equation}
where the coefficients $b\dvtx\mathbb{R}^{d}\rightarrow\mathbb{R}^{d}$
and $%
\sigma\dvtx\mathbb{R}^{d}\rightarrow\mathbb{R}^{d}\times \mathbb
{R}^{d}$%
are Borel measurable. Suppose that $\sigma(x)$ has an inverse $\sigma
^{-1}(x)$ for all $x\in\mathbb{R}^{d}$. Further assume that $\sigma
^{-1}\dvtx%
\mathbb{R}^{d}\rightarrow\mathbb{R}^{d}\times \mathbb{R}^{d}$ is
continuously differentiable such that
\[
\frac{\partial}{\partial x_{k}}\sigma_{lj}^{-1}=\frac{\partial
}{\partial
x_{j}}
\sigma_{lk}^{-1}
\]
for all $l,k,j=1,\ldots,d$. In addition, require that the function
$\Lambda\dvtx%
\mathbb{R}^{d}\rightarrow\mathbb{R}^{d}$ defined by
\[
\Lambda(x):=\int_{0}^{1}\sigma^{-1}(tx)
\cdot x\,dt
\]
possesses a Lipschitz continuous inverse $\Lambda^{-1}\dvtx\mathbb{R}%
^{d}\rightarrow\mathbb{R}^{d}$. Let $D\Lambda\dvtx\mathbb{R}%
^{d}\rightarrow\break  L ( \mathbb{R}^{d},\mathbb{R}^{d} ) $ and $
D^{2}\Lambda\dvtx\mathbb{R}^{d}\rightarrow L ( \mathbb{R}^{d}\times
\mathbb{R}^{d},\mathbb{R}^{d} ) $ be the existing corresponding
derivatives of~$\Lambda$.

Assume that the function $b^{\ast}\dvtx\mathbb{R}^{d}\rightarrow
\mathbb{R}%
^{d}$ given by
\begin{eqnarray*}
b^{\ast}(x) &:=& D\Lambda\bigl( \Lambda^{-1} ( x ) \bigr)
\bigl[ b\bigl(\Lambda^{-1} ( x ) \bigr) \bigr]
\\
&&{}+\frac{1}{2}D^{2}\Lambda\bigl( \Lambda^{-1} ( x )
\bigr) \Biggl[ \sum_{i=1}^{d}\sigma\bigl(
\Lambda^{-1} ( x ) \bigr) [ e_{i} ], \sum
_{i=1}^{d}\sigma\bigl(\Lambda^{-1} ( x )
\bigr) [ e_{i} ] %
\Biggr]
\end{eqnarray*}
is bounded and Borel measurable, where $e_{i}$, $i=1,\ldots,d$, is a
basis of $%
\mathbb{R}^{d}$.

Then there exists a stochastic flow $(s,t,x)\longmapsto\phi_{s,t}(x)$ of
the SDE (\ref{SDEDiffusion}) such that
\[
\phi_{s,t}(\cdot)\in L^{2}\bigl(\Omega,W^{p}
\bigl(\mathbb{R}^{d},w\bigr)\bigr)
\]
for all $0\leq s\leq t\leq1$ and all $p>1$.
\end{theorem}

\begin{pf}
Because of our assumptions, we see that $\Lambda^{-1}$ is twice continuously
differentiable and that
\[
D\Lambda(y)\sigma(y)=\mathcal{I}_{d}
\]
for all $y\in\mathbb{R}^{d}$.

Then It\^{o}'s lemma applied to (\ref{FlowSDE}) implies that
\begin{eqnarray*}
dY_{t}^{x} &=&D\Lambda\bigl( \Lambda^{-1}
\bigl( Y_{t}^{x} \bigr) \bigr) \bigl[ b\bigl(
\Lambda^{-1} \bigl( Y_{t}^{x} \bigr) \bigr) \bigr]
\\
&&{} +\frac{1}{2}D^{2}\Lambda\bigl(
\Lambda^{-1} \bigl( Y_{t}^{x} \bigr) \bigr)
\\
&&\quad{} \times
\Biggl[ \sum_{i=1}^{d}\sigma\bigl(
\Lambda^{-1} \bigl( Y_{t}^{x} \bigr) \bigr) [
e_{i} ], \sum_{i=1}^{d}\sigma
\bigl(\Lambda^{-1} \bigl( Y_{t}^{x} \bigr) \bigr)
[ e_{i} ] \Biggr] \,dt+dB_{t},
\\
Y_{0}^{x} &=&\Lambda( x ), \qquad0\leq t\leq1,
\end{eqnarray*}
where $Y_{t}^{x}=\Lambda( X_{t}^{x} ) $. Because of Theorem
\ref{MainTheorem} and a chain rule for functions in Sobolev spaces (see, e.g.,
\cite{Ziemer}) there exists a stochastic flow $(s,t,x)\longmapsto\phi
_{s,t}(x)$ of the SDE (\ref{SDEDiffusion}) such that $\phi_{s,t}(\cdot
)\in
L^{2}(\Omega,W^{p}(\mathbb{R}^{d},w))$ for all $0\leq s\leq t\leq1$ and
all $p>1$.
\end{pf}

%s3 #&#
\section{Application to the stochastic transport equation}\label{sec3}

In this section, we will study the stochastic transport equation
%
%
%e27 #&#
\begin{equation}
\cases{ \displaystyle d_{t}u(t,x)+ \bigl(b(t,x)\cdot Du(t,x)\bigr)\,dt+\sum
_{i=1}^{d}e_{i}\cdot Du(t,x)\circ
dB_{t}^{i}=0,
\cr
u(0,x)= u_{0}(x),}\label{TransportEqDiff}
\end{equation}
where $e_{1}, \ldots, e_{d}$ is the canonical basis of $\mathbb
{R}^{d}$, $%
b\dvtx[0,1]\times\mathbb{R}^{d}\rightarrow\mathbb{R}^{d}$ is a given
bounded measurable vector
field and $u_{0}\dvtx\mathbb{R}^{d}\rightarrow\mathbb{R}$ is a given initial
data. The stochastic integration is understood in the Stratonovich sense.

In \cite{Kunita}, it is proved that for smooth data and a sufficiently regular
vector field~$b$, (\ref{TransportEqDiff}) has an explicit solution
$u(t,x) =
u_0(\phi_t^{-1}(x))$ where $\phi_t(x)$ is the flow map generated by the
strong solutions $(X_t^x)_{t \geq0}$ of the SDE (\ref{FlowSDE}). In
fact, this
solution of the transport equation is strong in the sense that
$u(t,\cdot)$ is differentiable everywhere in $x$ almost
surely for all $t$, and it satisfies the integral equation
\[
u(t,x) + \int_0^t Du(s,x) \cdot b(s,x) \,ds +
\sum_{i=1}^d \int_0^t
e_i \cdot Du(s,x) \circ dB^i_s =
u_0(x)
\]
almost surely, for every $t$.

We shall use the following notion of weak solution
%a stronger notion of solution that is adopted from
(cf. Definition 12 in \cite{FlandoliGubinelliPriola}).

%
%de18 #&#
\begin{definition}
Let $b$ be bounded and measurable and $u_{0}\in L^{\infty}(\mathbb
{R}^{d})$. A~\textit{weak solution}
%ly differentiable $L^{\infty}$-
of the transport equation (%
\ref{TransportEqDiff}) is a stochastic process $u\in L^{\infty}(\Omega
\times{}[0,1]\times\mathbb{R}^{d})$ such that, for every $t$, the
function $u(t,\cdot)$ is weakly differentiable a.s. with $\sup_{0\leq
s\leq
1,x\in\mathbb{R}^{d}}E[\llvert Du(s,x)\rrvert^{4}]<\infty$
and for\vspace*{1pt}
every test function $\theta\in C_{0}^{\infty}(\mathbb{R}^{d})$, the
process $\int_{\mathbb{R}^{d}}\theta(x)u(t,x)\,dx$ has a continuous
modification which is an $\mathcal{F}_{t}$-semi-martingale satisfying
%
%
%e28 #&#
\begin{eqnarray}\label{WeakSolution}
\int_{\mathbb{R}^{d}}\theta(x)u(t,x)\,dx& =&\int_{\mathbb{R}^{d}}
\theta(x)u_{0}(x)\,dx
\nonumber
\\
&&{} -\int_{0}^{t}\int_{\mathbb{R}^{d}}Du(s,x)
\cdot b(s,x)\theta(x)\,dx\,ds
\\
&&{} +\sum_{i=1}^{d}\int
_{0}^{t} \biggl( \int_{\mathbb
{R}^{d}}u(s,x)D_{i}
\theta(x)\,dx \biggr) \circ dB_{s}^{i}, \nonumber
\end{eqnarray}
where $Du(t,x)$ is the weak derivative of $u(t,x)$ in the following space-variable.
\end{definition}

Our definition of a weak %ly %differentiable
solution for (\ref{TransportEqDiff}) differs slightly from that in \cite
{FlandoliGubinelliPriola} due to the fact that we do not require any regularity
on the coefficient $b$ except Borel measurability and boundedness. To
compensate for it, the expression depends on
the weak derivative of $u(t,x)$.

%However, using the same proof as in Lemma 13
%\cite{FlandoliGubinelliPriola},
%on can reformulate the problem in It\^{o} form as follows
It is easy to see that equation (\ref{WeakSolution}) can be written in
the equivalent It\^o form.

%
%le19 #&#
\begin{lemma}
\label{EquivalentDef} A process $u\in L^{\infty}(\Omega\times{}[
0,1]\times\mathbb{R}^{d})$ is a weak %%ly differentiable $L^{\infty}$
solution
of the transport equation (\ref{TransportEqDiff}) if and only if, for
every $t$, the
function $u(t,\cdot)$ is weakly differentiable a.s. with $\sup_{0\leq
s\leq
1,x\in\mathbb{R}^{d}}E[\llvert Du(s,x)\rrvert^{4}]<\infty$,
and for every
test function $\theta\in C_{0}^{\infty}(\mathbb{R}^{d})$, the process
$%
\int_{\mathbb{R}^{d}}\theta(x)u(t,x)\,dx$ has a continuous $\mathcal
{F}_{t}$%
-adapted modification satisfying the following equation a.s.:
\begin{eqnarray*}
\int_{\mathbb{R}^{d}}\theta(x)u(t,x)\,dx& =&\int_{\mathbb{R}^{d}}
\theta(x)u_{0}(x)\,dx
\\
&&{} -\int_{0}^{t}\int_{\mathbb{R}^{d}}Du(s,x)
\cdot b(s,x)\theta(x)\,dx\,ds
\\
&&{} +\sum_{i=1}^{d}\int
_{0}^{t} \biggl( \int_{\mathbb
{R}^{d}}u(s,x)D_{i}
\theta(x)\,dx \biggr) \,dB_{s}^{i}
\\
&&{} +\frac{1}{2}\int_{0}^{t}\int
_{\mathbb{R}^{d}}u(s,x)\Delta\theta(x)\,dx\,ds.
\end{eqnarray*}
\end{lemma}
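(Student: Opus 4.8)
The plan is to carry out the standard Stratonovich-to-It\^{o} conversion on the noise term of (\ref{WeakSolution}), exactly as in the proof of Lemma 13 in \cite{FlandoliGubinelliPriola}. First I would observe that the notion of differentiable weak solution already supplies the semimartingale structure that is needed: applying the defining identity (\ref{WeakSolution}) not only to $\theta$ but to each test function $D_{i}\theta\in C_{0}^{\infty}(\mathbb{R}^{d})$ shows that every process
\begin{equation*}
M_{t}^{i}:=\int_{\mathbb{R}^{d}}u(t,x)D_{i}\theta(x)\,dx,\qquad i=1,\dots,d,
\end{equation*}
has a continuous $\mathcal{F}_{t}$-semimartingale modification, so that the Stratonovich integrals $\int_{0}^{t}M_{s}^{i}\circ dB_{s}^{i}$ occurring in (\ref{WeakSolution}) are well defined. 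Moreover, (\ref{WeakSolution}) applied to $D_{i}\theta$ exhibits the semimartingale decomposition of $M^{i}$: its local martingale part is $\sum_{j=1}^{d}\int_{0}^{\cdot}\big(\int_{\mathbb{R}^{d}}u(s,x)D_{j}D_{i}\theta(x)\,dx\big)\,dB_{s}^{j}$, while the drift $-\int_{0}^{\cdot}\int_{\mathbb{R}^{d}}Du(s,x)\cdot b(s,x)D_{i}\theta(x)\,dx\,ds$ together with the It\^{o} corrections coming from the remaining Stratonovich integrals form a continuous finite-variation process; the hypothesis $\sup_{s,x}E[|Du(s,x)|^{4}]<\infty$ together with boundedness of $b$ and compact support of $\theta$ ensures this drift makes sense.

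Next I would compute the It\^{o}--Stratonovich correction. For each $i$ one has $\int_{0}^{t}M_{s}^{i}\circ dB_{s}^{i}=\int_{0}^{t}M_{s}^{i}\,dB_{s}^{i}+\tfrac{1}{2}\langle M^{i},B^{i}\rangle_{t}$, and since a continuous finite-variation process has vanishing covariation with $B^{i}$, only the martingale part of $M^{i}$ contributes; picking out the $j=i$ term above gives
\begin{equation*}
\langle M^{i},B^{i}\rangle_{t}=\int_{0}^{t}\Big(\int_{\mathbb{R}^{d}}u(s,x)D_{i}D_{i}\theta(x)\,dx\Big)\,ds,
\end{equation*}
all integrals being finite because $u\in L^{\infty}(\Omega\times[0,1]\times\mathbb{R}^{d})$ and $\theta$ has compact support. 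Summing over $i$ yields $\tfrac{1}{2}\sum_{i=1}^{d}\langle M^{i},B^{i}\rangle_{t}=\tfrac{1}{2}\int_{0}^{t}\big(\int_{\mathbb{R}^{d}}u(s,x)\Delta\theta(x)\,dx\big)\,ds$, which is precisely the extra term in the asserted It\^{o} identity. Substituting this into (\ref{WeakSolution}) gives one implication, and reading the computation backwards (replacing each It\^{o} integral by the corresponding Stratonovich integral minus its correction) gives the converse.

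The argument is essentially bookkeeping; the one point that needs a little care is the identification of the joint quadratic variation $\langle M^{i},B^{i}\rangle$, which forces one to know the martingale part of $M^{i}$ explicitly — this is why the defining identity must be invoked one differentiation order higher, on $D_{i}\theta$ rather than on $\theta$, and one should note that $C_{0}^{\infty}(\mathbb{R}^{d})$ being closed under differentiation keeps this consistent. Beyond that, one only needs that each integrand $\int_{\mathbb{R}^{d}}u(s,x)D_{i}\theta(x)\,dx$ is progressively measurable and bounded, so that its It\^{o} stochastic integral against $B^{i}$ is a genuine (square-integrable) martingale and the covariation manipulations above are legitimate.
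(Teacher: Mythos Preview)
Your proposal is correct and is precisely the argument the paper intends: the paper does not give its own proof but simply refers to Lemma~13 of \cite{FlandoliGubinelliPriola}, whose proof is the Stratonovich-to-It\^{o} conversion you describe, namely applying the defining identity to $D_{i}\theta$ to extract the martingale part of $M^{i}$ and then computing $\langle M^{i},B^{i}\rangle_{t}=\int_{0}^{t}\int_{\mathbb{R}^{d}}u(s,x)D_{i}^{2}\theta(x)\,dx\,ds$.
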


The main result of this section is the following existence and uniqueness
theorem for solutions of the stochastic transport equation (\ref
{TransportEqDiff}).

%
%th20 #&#
\begin{theorem}\label{ExistenceOfWeakSolution} Let $b$ be bounded and Borel
measurable. Suppose
$u_{0}\in C_{b}^{1}(\mathbb{R}^{d})$. Then there exists a unique weak
%%ly differentiable $L^{\infty}$
%differentiable, weak $L^{\infty}$-
solution $u(t,x)$ to the stochastic transport equation (\ref%
{TransportEqDiff}). For each $t > 0$ and all $p \in(1, \infty)$, the
weak solution $u(t, \cdot)$ belongs a.s. to $W^{1,p}(\mathbb
{R}^{d},w)$, the weighted Sobolev space introduced in Section~\ref{sec1}.
Moreover, for fixed $t$ and $x$, $u(t,\cdot,x)$ is Malliavin-differentiable.
\end{theorem}

%
%re21 #&#
\begin{remark}
As noted in \cite{FlandoliGubinelliPriola},
%We mention that the corresponding deterministic transport equation %in(
%\ref%(\ref{TransportEqDiff}),
the deterministic transport equation is generally ill-posed under the
conditions of Theorem~\ref{ExistenceOfWeakSolution}. It is remarkable that
Brownian forcing on the transport equation induces
uniqueness and regularity of the solution.
% to the stochastic transport equation.
\end{remark}

We shall prove Theorem~\ref{ExistenceOfWeakSolution} using a sequence
$b_{n}\dvtx[0,1]\times\mathbb{R}%
^{d}\rightarrow\mathbb{R}^{d}$ of uniformly bounded smooth
functions with compact support converging almost everywhere to~$b$. We then
study the corresponding sequence of solutions of the transport equation
(\ref{TransportEqDiff})
when $b$ is replaced by $b_{n}$.

For the rest of this section, we denote by $\phi_t$ the flow of the SDE
(\ref{FlowSDE})
driven by the vector field $b$, and by $\phi_{n,t}$ the flow of the SDE
(\ref{FlowSDE})
with $b_n$ in place of~$b$.

We begin with the following lemma.

%
%le22 #&#
\begin{lemma}
\label{TransformConvergence} Let $u_0 \in C^1_b(\mathbb{R}^d)$ and $f
\in
L^1(\mathbb{R}^d)$. Then the sequence
\[
\biggl( \int_{\mathbb{R}^d} u_0\bigl(
\phi_{n,s}^{-1}(x)\bigr)f(x) \,dx \biggr)_{n
\geq1}
\]
converges to $\int_{\mathbb{R}^d} u_0(\phi_s^{-1}(x))f(x) \,dx $ in $%
L^2(\Omega)$ for every $s \in[0,1]$.
\end{lemma}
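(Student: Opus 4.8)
\textbf{Proof proposal for Lemma \ref{TransformConvergence}.}

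The plan is to reduce the convergence of the integrals to the convergence of the inverse flow maps $\phi_{n,s}^{-1}(x) \to \phi_s^{-1}(x)$ in a suitable sense, and then combine this with the uniform integrability provided by the boundedness of $u_0$ and the integrability of $f$. First I would write
\begin{equation*}
\int_{\mathbb{R}^d} u_0(\phi_{n,s}^{-1}(x))f(x)\,dx - \int_{\mathbb{R}^d} u_0(\phi_s^{-1}(x))f(x)\,dx = \int_{\mathbb{R}^d} \bigl( u_0(\phi_{n,s}^{-1}(x)) - u_0(\phi_s^{-1}(x)) \bigr) f(x)\,dx ,
\end{equation*}
and estimate the $L^2(\Omega)$ norm of the right-hand side. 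Since $u_0 \in C^1_b(\mathbb{R}^d)$, $u_0$ is globally Lipschitz, say with constant $L$, so the integrand is bounded pointwise by $L\,|\phi_{n,s}^{-1}(x) - \phi_s^{-1}(x)|\,|f(x)|$; it is also bounded by $2\|u_0\|_\infty |f(x)|$, which is in $L^1(\mathbb{R}^d)$ uniformly in $n$ and $\omega$. This dual bound is what will let me pass to the limit.

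The key step is to establish that $\phi_{n,s}^{-1}(x) \to \phi_s^{-1}(x)$ in $L^2(\Omega)$ for each fixed $x$ (and in fact for a.e.\ $x$, along a common subsequence). Here I would use that $\phi_{n,s}^{-1}(x) = \psi^{n}_s(x)$ is itself the solution of an SDE of the same type \eqref{FlowSDE}: indeed $\phi_{n,s}^{-1} = \phi^{n}_{s,0}$ with time running backwards, which after a time reversal is again an SDE with a uniformly bounded drift $-b_n$ (up to the usual sign and Girsanov bookkeeping). Consequently Remark \ref{ConvergenceRemark}, applied to the backward equation, yields a subsequence $\{n(k)\}$ along which $\phi_{n(k),s}^{-1}(x) \to \phi_s^{-1}(x)$ in $L^2(\Omega)$ for all $(s,x) \in [0,1] \times \mathbb{R}^d$. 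Passing to a further subsequence I may assume this convergence holds $\mu$-a.s.\ as well.

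Then I would argue as follows. Fix the subsequence above. By continuity of $u_0$, $u_0(\phi_{n(k),s}^{-1}(x)) \to u_0(\phi_s^{-1}(x))$ a.s., and since $|u_0(\phi_{n(k),s}^{-1}(x)) - u_0(\phi_s^{-1}(x))| \le 2\|u_0\|_\infty$ with $f \in L^1(\mathbb{R}^d)$, dominated convergence (in the product measure $\mu \otimes |f(x)|\,dx$, together with the bound $\bigl(\int |f|\bigr)^2 \|u_0\|_\infty^2$ controlling the $L^2(\Omega)$ second moment) gives
\begin{equation*}
E\Bigl[ \Bigl| \int_{\mathbb{R}^d} \bigl( u_0(\phi_{n(k),s}^{-1}(x)) - u_0(\phi_s^{-1}(x)) \bigr) f(x)\,dx \Bigr|^2 \Bigr] \longrightarrow 0 .
\end{equation*}
Finally, to upgrade convergence along a subsequence to convergence of the whole sequence, I use the standard argument already employed in the proof of Proposition \ref{MainProposition}: every subsequence of $\bigl(\int u_0(\phi_{n,s}^{-1}(x)) f(x)\,dx\bigr)_n$ has, by the above, a further subsequence converging in $L^2(\Omega)$ to the same limit $\int u_0(\phi_s^{-1}(x)) f(x)\,dx$, hence the full sequence converges. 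The main obstacle I anticipate is the first point of the key step — justifying rigorously that $\phi_{n,s}^{-1}$ is (a time-reversal of) a solution of an SDE of the form \eqref{FlowSDE} with uniformly bounded drift, so that Remark \ref{ConvergenceRemark} genuinely applies to the inverse flows; once that is in hand, the rest is dominated convergence and the subsequence trick.
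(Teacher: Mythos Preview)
Your proposal is correct and follows essentially the same route as the paper: bound the difference by the Lipschitz constant of $u_0$ times $\|\phi_{n,s}^{-1}(x)-\phi_s^{-1}(x)\|_{L^2(\Omega)}$, use the uniform bound $2\|u_0\|_\infty|f|\in L^1(\mathbb{R}^d)$, and conclude by dominated convergence. In fact you are more careful than the paper on two points it leaves implicit --- that Remark~\ref{ConvergenceRemark} only yields a subsequence (handled by your sub-subsequence argument) and that the remark must be applied to the backward flow $\phi_{s,0}$ to cover $\phi_{n,s}^{-1}$ --- and your identification of the latter as the main point to justify is exactly right.
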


\begin{pf}
Consider
\begin{eqnarray*}
&& \biggl\| \int_{\mathbb{R}^d} u_0\bigl(\phi_{n,s}^{-1}(x)
\bigr)f(x) \,dx - \int_{\mathbb{R}^d} u_0\bigl(
\phi_s^{-1}(x)\bigr)f(x) \,dx \biggr\|_{L^2(\Omega)}
\\
&&\qquad \leq\int_{\mathbb{R}^d} \bigl\|u_0\bigl(
\phi_{n,s}^{-1}(x)\bigr) - u_0\bigl(
\phi_s^{-1}(x)\bigr)\bigr\|_{L^2(\Omega)} \bigl|f(x)\bigr| \,dx.
\end{eqnarray*}
We have $\|u_0(\phi_{n,s}^{-1}(x)) - u_0(\phi_s^{-1}(x))\|_{L^2(\Omega)}
\leq\|Du_0 \|_{\infty} \|\phi_{n,s}^{-1}(x) -
\phi_s^{-1}(x)\|_{L^2(\Omega)} $ which goes to zero for every $s$ and
$x$. Now
\[
\bigl\|u_0\bigl(\phi_{n,s}^{-1}\bigr) -
u_0\bigl(\phi_s^{-1}\bigr)
\bigr\|_{L^2(\Omega)} |f| \leq2\|u_0\|_{\infty} |f| \in
L^1\bigl(\mathbb{R}^d\bigr)
\]
and the result follows by dominated convergence.
\end{pf}

We also need the following result (see Theorem 2 in \cite{Haj} and also
%in connection with
\cite{Re1,Re2}).

%
%th23 #&#
\begin{theorem}
\label{ChangeVariable}Let $\mathcal{U}$ be open subset of $\mathbb
{R}^{d}$ and
$f\in W^{1,d}(\mathcal{U})$ be a homeomorphism.
%$ where $p=d$ (dimension).
Then $f$ satisfies the Lusin's condition, that is,
\[
E\subset\mathcal{U},\qquad\llvert E\rrvert=0\quad\Longrightarrow\quad \bigl
\llvert f(E)
\bigr\rrvert=0.
\]
Here, $\llvert A\rrvert$ stands for the Lebesgue measure of a
set $A$.%

Moreover, for every measurable function $g\dvtx\mathcal
{U}\longrightarrow
{}[0,\infty)$ and a measurable set $E\subset\mathcal{U}$ the following
change of variable formula is valid:
\[
\int_{E}(g\circ f)\llvert\det Jf\rrvert \,dx=\int
_{f(E)}g(y)\,dy,
\]
where $\det Jf$ is the determinant of the Jacobian of $f$.
\end{theorem}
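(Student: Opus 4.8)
The plan is to prove the two assertions in the stated order: Lusin's condition~(N) first --- this is where the work lies --- and then the change of variables formula, which follows from~(N) together with the classical area formula. The core of~(N) is the geometric estimate
\[
|f(B(x,\rho))|\le C_d\int_{B(x,2\rho)}|Df(z)|^d\,dz\qquad\text{whenever }\overline{B(x,2\rho)}\subset\mathcal U,
\]
with $C_d$ depending only on $d$. To obtain it I would pass to polar coordinates around $x$: since $f\in W^{1,d}_{\mathrm{loc}}$ is ACL, Fubini (coarea) shows that for a.e.\ radius $r$ the trace $f|_{\partial B(x,r)}$ lies in $W^{1,d}(\partial B(x,r))$ and that $\int_0^{2\rho}\bigl(\int_{\partial B(x,r)}|D_\tau f|^d\,d\mathcal H^{d-1}\bigr)dr\le\int_{B(x,2\rho)}|Df|^d\,dz$, where $D_\tau f$ is the tangential gradient. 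Fix such an $r$. Because $f$ is a homeomorphism, invariance of domain together with the Jordan--Brouwer separation theorem identify $f(B(x,r))$ with the bounded complementary component $V_r$ of the topological $(d-1)$-sphere $\Gamma_r:=f(\partial B(x,r))$. Since $f|_{\partial B(x,r)}\in W^{1,d}(\partial B(x,r))$ with $d$ supercritical for the $(d-1)$-dimensional sphere, the area formula gives $\mathcal H^{d-1}(\Gamma_r)\le\int_{\partial B(x,r)}|D_\tau f|^{d-1}\,d\mathcal H^{d-1}\le c_d\,r^{(d-1)/d}\bigl(\int_{\partial B(x,r)}|D_\tau f|^{d}\,d\mathcal H^{d-1}\bigr)^{(d-1)/d}$ by H\"older's inequality; and since $\partial^*V_r\subseteq\partial V_r=\Gamma_r$ has finite $(d-1)$-measure, $V_r$ is a set of finite perimeter with $P(V_r)\le\mathcal H^{d-1}(\Gamma_r)$, so the isoperimetric inequality yields $|V_r|^{(d-1)/d}\le c_d\,\mathcal H^{d-1}(\Gamma_r)$. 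Combining these and raising to the power $d/(d-1)$ gives $|f(B(x,r))|\le C_d\,r\int_{\partial B(x,r)}|D_\tau f|^{d}\,d\mathcal H^{d-1}$; integrating over $r\in(\rho,2\rho)$, using that $r\mapsto|f(B(x,r))|$ is nondecreasing, produces the displayed estimate.

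Given the estimate, condition~(N) follows by a covering argument: for $|E|=0$, enclose $E$ in an open set of arbitrarily small Lebesgue measure, and use the Besicovitch covering theorem to select a countable family $\{B_i\}$ of balls covering $E$, each with its double contained in a slightly larger open set $V'$ of small measure and with the doubles $\{2B_i\}$ having bounded overlap $N_d$; then $|f(E)|\le\sum_i|f(B_i)|\le C_d\sum_i\int_{2B_i}|Df|^d\le C_dN_d\int_{V'}|Df|^d$, which tends to $0$ with $|V'|$ because $|Df|^d\in L^1_{\mathrm{loc}}$, so $|f(E)|=0$. (For $d=1$ this is the classical fact that absolutely continuous functions satisfy~(N).) The change of variables formula is then routine: $f\in W^{1,d}_{\mathrm{loc}}$ is approximately differentiable a.e.\ with approximate derivative $Df$, so by the Federer--Whitney decomposition $\mathcal U=\mathcal U_0\cup\bigcup_{k\ge1}F_k$ with $|\mathcal U_0|=0$ and $f|_{F_k}$ the restriction of a Lipschitz map $g_k$ with $Dg_k=Df$ on $F_k$; passing to a disjoint Borel refinement $\{F_k'\}$ of $\mathcal U\setminus\mathcal U_0$, applying the classical area formula to each $g_k$ on $E\cap F_k'$ and summing in $k$ gives $\int_E(g\circ f)|\det Jf|\,dx=\int_{\mathbb R^d}g(y)\,\#\bigl(f^{-1}(y)\cap(E\setminus\mathcal U_0)\bigr)\,dy$; injectivity of $f$ collapses the multiplicity to $\chi_{f(E\setminus\mathcal U_0)}$, and by~(N) the set $f(E\cap\mathcal U_0)$ is null, so the right side equals $\int_{f(E)}g(y)\,dy$. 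Measurability of $f(E)$ follows from continuity of $f$ together with~(N).

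The hard part will be the geometric estimate underlying~(N). Two points need care: justifying, for a.e.\ $r$, both that $f|_{\partial B(x,r)}$ genuinely belongs to $W^{1,d}$ of the sphere with the stated radial integral bound, and that $f(B(x,r))$ coincides with the bounded Jordan component of $\mathbb R^d\setminus f(\partial B(x,r))$, so that the isoperimetric inequality applies to a set of finite perimeter. This is exactly where the homeomorphism hypothesis and the critical exponent $p=d$ are used in an essential way; for $p<d$ the conclusion of the theorem is in fact false (there are $W^{1,p}$ homeomorphisms carrying a null set onto a set of positive measure), so no argument avoiding these ingredients can work.
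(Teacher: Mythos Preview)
The paper does not prove this theorem at all: it is quoted as a known result, with the proof deferred to Hajlasz \cite{Haj} and the earlier work of Reshetnyak \cite{Re1}, \cite{Re2}. Your proposal is, in outline, exactly the classical argument contained in those references --- the sphere-slicing estimate leading to
\[
|f(B(x,\rho))|\le C_d\int_{B(x,2\rho)}|Df|^d,
\]
obtained by combining the area formula on almost every sphere (where the exponent $d$ is supercritical), H\"older's inequality, and the isoperimetric inequality applied to the bounded Jordan--Brouwer component, followed by a covering argument and then the Federer decomposition for the change-of-variables identity. So your reconstruction matches what the paper is invoking.

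One point to tighten: in your covering step you assert that Besicovitch yields a family $\{B_i\}$ whose \emph{doubles} $\{2B_i\}$ have bounded overlap. Besicovitch by itself does not give this, and for general families with bounded overlap the dilates need not have bounded overlap. The standard cure is to replace the Besicovitch cover by a Whitney decomposition of an open neighbourhood $V\supset E$ with $\int_V|Df|^d<\varepsilon$: the Whitney cubes $Q_j$ satisfy $2Q_j\subset V$ and the enlarged cubes $\{2Q_j\}$ do have a dimensional bound on their overlap, whence $|f(V)|\le\sum_j|f(Q_j)|\le C_d\sum_j\int_{2Q_j}|Df|^d\le C_d'\int_V|Df|^d<C_d'\varepsilon$. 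With this adjustment your argument is complete and coincides with the proof the paper is citing.
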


%
%re24 #&#
\begin{remark}
The random diffeomorphisms $\phi_{t}(\cdot),\phi_{t}^{-1}(\cdot)\in
W_{\mathrm{loc}}^{1,p}(\mathbb{R}^{d})$ a.s. and satisfy the conditions of Theorem
\ref{ChangeVariable} on each bounded and open subset $\mathcal{U}$ of
$\mathbb{R}%
^{d}$.
\end{remark}

We are now ready to prove Theorem~\ref{ExistenceOfWeakSolution}:

\begin{pf*}{Proof of Theorem \protect\ref{ExistenceOfWeakSolution}}
(1) \textit{Existence of a weak solution}.
We consider the approximation $\{b_{n}\}$ of $b$ as described prior to Lemma~\ref{WeakConvergenceOfSolutions}.
Then we know that there exists a unique strong solution to the
transport\vspace*{1pt} equation (\ref%
{TransportEqDiff}) when $b$ is replaced by~$b_{n}$, which is given
by $u_{n}(t,x)=u_{0}(\phi_{n,t}^{-1}(x)), n \geq1$. In particular,
$u_{n}$~is a
differentiable, weak $L^{\infty}$-solution,
%equivalently described by Lemma %
%\ref{EquivalentDef},
such that for every $\theta\in C^{\infty}(\mathbb{R}%
^{d})$
\begin{eqnarray}
\label{equality} \int_{\mathbb{R}^{d}}\theta(x)u_{n}(t,x)\,dx&
=&\int_{\mathbb
{R}^{d}}\theta(x)u_{0}(x)\,dx
\nonumber
\\
&&{} -\int_{0}^{t}\int_{\mathbb{R}^{d}}Du_{n}(s,x)
\cdot b_{n}(s,x)\theta(x)\,dx\,ds
\nonumber\\[-8pt]\\[-8pt]
&&{} +\sum_{i=1}^{d}\int
_{0}^{t} \biggl( \int_{\mathbb{R}^{d}}u_{n}(s,x)D_{i}
\theta(x)\,dx \biggr) \,dB_{s}^{i}
\nonumber
\\
&&{} +\frac{1}{2}\int_{0}^{t}\int
_{\mathbb{R}^{d}}u_{n}(s,x)\Delta\theta(x)\,dx\,ds.
\nonumber
\end{eqnarray}
Let us now define $u(t,x):=u_{0}(\phi_{t}^{-1}(x))$ so that $u\in
L^{\infty
}(\Omega\times{}[0,1]\times\mathbb{R}^{d})$, and $u(t,\cdot)$ is
weakly differentiable, a.s. We now let $n$ go to infinity to get that $%
u(t,x)$ is a solution of the transport equation.

The following two limits exist in $L^2(\Omega)$ by Lemma~\ref{TransformConvergence} and dominated convergence:
%we get that
%
\begin{eqnarray*}
\int_{\mathbb{R}^d} \theta(x) u_n(t,x) \,dx &\rightarrow&
\int_{\mathbb{R}^d} \theta(x) u(t,x) \,dx,
\\
%in $L^2(\Omega)$. Lemma~\ref{TransformConvergence} in connection with
%dominated convergence gives
\int_0^t \int_{\mathbb{R}^d}
u_n(s,x) \Delta\theta(x) \,dx \,ds &\rightarrow&\int_0^t
\int_{\mathbb{R}^d} u(s,x) \Delta\theta(x) \,dx \,ds.
\end{eqnarray*}
%
%in $L^2(\Omega)$.
By the It\^{o} isometry, we have
\[
\sum_{i=1}^d \int_0^t
\biggl( \int_{\mathbb{R}^d} u_n(s,x)D_i
\theta(x) \,dx \biggr) \,dB^i_s \rightarrow\sum
_{i=1}^d \int_0^t
\biggl( \int_{\mathbb{R}^d} u(s,x)D_i \theta(x) \,dx \biggr)
\,dB^i_s
\]
in $L^2(\Omega)$. Finally, we claim that
\begin{eqnarray*}
&& \int_0^t \int_{\mathbb{R}^d}
Du_n(s,x)\cdot b_n(s,x) \theta(x) \,dx \,ds
\\
&&\qquad \rightarrow\int
_0^t \int_{\mathbb{R}^d} Du(s,x)
\cdot b(s,x) \theta(x) \,dx \,ds
\end{eqnarray*}
in $L^2(\Omega)$. To see this, observe that
\[
\biggl( \int_0^t \int_{\mathbb{R}^d}
Du_n(s,x)\cdot b_n(s,x) \theta(x) \,dx \,ds
\biggr)_n
\]
is convergent in $L^2(\Omega)$ because of the convergence of the other
terms in equality (\ref{equality}). Then the claim
is proved once we show that $\int_0^t \int_{\mathbb{R}^d} Du_n(s,x)\cdot
b_n(s,x) \theta(x) \,dx \,ds$ converges \textit{weakly} to $\int_0^t \int_{%
\mathbb{R}^d} Du(s,x)\cdot b(s,x) \theta(x) \,dx \,ds$. Then the strong and weak
limit must coincide.

To prove weak convergence, we write the difference in three parts,
namely:
\begin{eqnarray*}
&& \int_{0}^{t}\int_{\mathbb{R}^{d}}Du_{n}(s,x)
\cdot b_{n}(s,x)\theta(x)\,dx\,ds-\int_{0}^{t}
\int_{\mathbb{R}^{d}}Du(s,x)\cdot b(s,x)\theta(x)\,dx\,ds
\\
&&\qquad
=\int_{0}^{t}\int_{\mathbb{R}^{d}}Du_{n}(s,x)
\cdot b_{n}(s,x)\theta(x)\,dx\,ds
\\
&&\quad\qquad{} -\int_{0}^{t}
\int_{\mathbb{R}^{d}}Du_{n}(s,x)\cdot b(s,x)\theta(x)\,dx\,ds
\\
&&\qquad\quad {} +\int_{0}^{t}\int_{\mathbb{R}^{d}}Du_{0}
\bigl(\phi_{n,s}^{-1}(x)\bigr)D\phi_{n,s}^{-1}(x)
\cdot b(s,x)\theta(x)\,dx\,ds
\\
&&\quad\qquad{} -\int_{0}^{t}\int
_{\mathbb{R}%
^{d}}Du_{0}\bigl(\phi_{s}^{-1}(x)
\bigr)D\phi_{n,s}^{-1}(x)\cdot b(s,x)\theta(x)\,dx\,ds
\\
&&\quad\qquad{} +\int_{0}^{t}\int_{\mathbb{R}^{d}}Du_{0}
\bigl(\phi_{s}^{-1}(x)\bigr)D\phi_{n,s}^{-1}(x)
\cdot b(s,x)\theta(x)\,dx\,ds
\\
&&\quad\qquad{} -\int_{0}^{t}\int
_{\mathbb{R}%
^{d}}Du_{0}\bigl(\phi_{s}^{-1}(x)
\bigr)D\phi_{s}^{-1}(x)\cdot b(s,x)\theta(x)\,dx\,ds
\\
&&\qquad
=(i)_{n}+(\mathit{ii})_{n}+(\mathit{iii})_{n}.
\end{eqnarray*}
We shall deal with these terms separately.

($\alpha$): the first term $(i)_{n}$ converges to 0 strongly in
$L^{2}(\Omega)$ as $n \to\infty$,
since by H\"{o}lder's inequality and Fubini's theorem
\begin{eqnarray*}
E\bigl[(i)_{n}^{2}\bigr]&=&E \biggl[ \biggl( \int
_{0}^{t}\int_{\mathbb{R}%
^{d}}Du_{n}(s,x)
\cdot\bigl(b_{n}(s,x)-b(s,x)\bigr)\theta(x)\,dx\,ds \biggr)
^{2} \biggr]
\\
&\leq&\int_{0}^{t}\int_{\mathbb{R}%
^{d}}E
\bigl[\bigl|Du_{n}(s,x)\bigr|^{2}\bigr]\bigl|b_{n}(s,x)-b(s,x)\bigr|^{2}\bigl|
\theta(x)\bigr|\,dx\Vert\theta\Vert_{L^{1}(\mathbb{R})}.
\end{eqnarray*}
We have that
\[
E\bigl[\bigl|Du_{n}(s,x)\bigr|^{2}\bigr]\leq\Vert
Du_{0}\Vert_{\infty}^{2}E\bigl[\bigl|D\phi
_{n,s}^{-1}(x)\bigr|^{2}\bigr],
\]
which is uniformly bounded in $n$, $s$ and $x$ by Proposition~\ref{sobolevestimate}. Then, using dominated convergence, we obtain
$\lim_{n \to\infty} (i)_{n} =0$.

($\beta$): the second term converges strongly to 0 in $L^{2}(\Omega
)$, because of the following estimates:
\begin{eqnarray*}
E\bigl[(\mathit{ii})_{n}^{2}\bigr]&\leq&\Vert b\Vert_{\infty}^{2}
\\
&&\!{} \times E
\biggl( \int_{0}^{t}\int_{%
\mathbb{R}^{d}}\bigl|Du_{0}
\bigl(\phi_{n,s}^{-1}(x)\bigr)-Du_{0}\bigl(\phi
_{s}^{-1}(x)\bigr)\bigr|\bigl|D\phi_{n,s}^{-1}(x)\bigr|\bigl|
\theta(x)\bigr|\,dx\,ds \biggr) ^{2}
\\
&\leq&\Vert b\Vert_{\infty}^{2}t\Vert\theta\Vert_{L^{1}(\mathbb{R}%
^{d})}
\\
&&\!{}\times
\int_{0}^{t}\int_{\mathbb{R}^{d}}E
\bigl[\bigl|Du_{0}\bigl(\phi_{n,s}^{-1}(x)
\bigr)-Du_{0}\bigl(\phi_{s}^{-1}(x)
\bigr)\bigr|^{2}\bigl|D\phi_{n,s}^{-1}(x)\bigr|^{2}
\bigr]\bigl|\theta(x)\bigr|\,dx\,ds
\\
&\leq&\Vert b\Vert_{\infty}^{2}t\Vert\theta\Vert_{L^{1}(\mathbb{R}%
^{d})}
\\
&&\!{}\times \int_{0}^{t}\int_{\mathbb{R}^{d}} \bigl(
E\bigl[\bigl|Du_{0}\bigl(\phi_{n,s}^{-1}(x)
\bigr)-Du_{0}\bigl(\phi_{s}^{-1}(x)
\bigr)\bigr|^{4}\bigr] \bigr) ^{1/2}
\\
&&\hspace*{40pt}{}\times \bigl( E\bigl[\bigl|D\phi
_{n,s}^{-1}(x)\bigr|^{4}\bigr] \bigr)
^{1/2}\bigl|\theta(x)\bigr|\,dx\,ds
\\
&\leq&\Vert b\Vert_{\infty}^{2}t\Vert\theta\Vert_{L^{1}(\mathbb{R}%
^{d})}
\sup_{k,r,y} \bigl( E\bigl[\bigl|D\phi_{k,r}^{-1}(y)\bigr|^{4}
\bigr] \bigr) ^{1/2}
\\
&&\!{} \times\int_{0}^{t}\int_{\mathbb{R}^{d}}
\bigl( E\bigl[\bigl|Du_{0}\bigl(\phi_{n,s}^{-1}(x)
\bigr)-Du_{0}\bigl(\phi_{s}^{-1}(x)
\bigr)\bigr|^{4}\bigr] \bigr) ^{1/2}\bigl|\theta(x)\bigr|\,dx\,ds.
\end{eqnarray*}
The above estimates are consequences of H\"{o}lder's inequality. Since
$Du_{0}$ is
bounded and continuous, the right-hand side of the above inequality
converges to 0 by dominated convergence.

($\gamma$): for the last term, let $X\in L^{2}(\Omega)$ and consider
\begin{eqnarray*}
&& E\bigl[(\mathit{iii})_{n}X\bigr]
\\
&&\qquad =\int_{0}^{t}E
\biggl[\int_{\mathbb{R}^{d}}Du_{0}\bigl(\phi
_{s}^{-1}(x)\bigr) \bigl(D\phi_{n,s}^{-1}(x)-D
\phi_{s}^{-1}(x)\bigr)\cdot b(s,x)\theta(x)X\,dx\biggr]\,ds.
\end{eqnarray*}
Now, for each $s$, since $Du_{0}$, $b$ and $\theta$ are bounded and
$D\phi
_{s}^{-1}$ is the weak limit of~$D\phi_{n,s}^{-1}$, this expression tends
to 0 as $n\rightarrow\infty$.

(2) \textit{Uniqueness of weak solutions}.
Let us assume that $u$ is a weak
solution to the stochastic transport equation (\ref{WeakSolution})
(with $%
\sup_{0\leq s\leq1,x\in\mathbb{R}^{d}}E[\llvert Du(s,x)\rrvert
^{4}]<\infty)$. We will show that
\[
u(t,x)=u_{0}\bigl(\phi_{t}^{-1}(x)\bigr)\qquad\mbox{a.e.}
\]
This will guarantee uniqueness of the weak solution to the transport equation.
So, let $V$ be a bounded and open subset of $\mathbb{R}^{d}$ and
consider for the locally integrable function $u(t,\cdot)$ on $\mathbb
{R}%
^{d} $ its mollification
\[
u_{\varepsilon}(t,x)=(u\ast\eta_{\varepsilon})(x)=\int_{\mathbb
{R}^{d}}u(t,y)
\eta_{\varepsilon}(x-y)\,dy
\]
with respect to the standard mollifier $\eta$.

We observe that $u_{\varepsilon}$ satisfies the equation
\[
u_{\varepsilon}(t,x)=u_{0,\varepsilon}(x)-\int_{0}^{t}(b
\cdot Du)_{\varepsilon
}(s,x)\,ds-\int_{0}^{t}(Du)_{\varepsilon}(s,x)
\circ dB_{s}.
\]

Then using the It\^{o}--Ventzell formula applied to $u_{\varepsilon}$ and
$\phi
_{t}(x)$ (see \cite{Kunita}) gives
%
%
%e29 #&#
\begin{eqnarray}\label{1*}
&& u_{\varepsilon}\bigl(t,\phi_{t}(x)\bigr)
\nonumber\\[-9pt]\\[-12pt]
&&\qquad =u_{0,\varepsilon}(x)
+\int _{0}^{t}\bigl((Du)_{\varepsilon
}\bigl(s,
\phi_{s}(x)\bigr)\cdot b\bigl(s,\phi_{s}(x)\bigr)-(b\cdot
Du)_{\varepsilon}\bigl(s,\phi_{s}(x)\bigr)\bigr)\,ds.\hspace*{-17pt} \nonumber
\end{eqnarray}

Now let $\tau\in L^{\infty}(\Omega)$ and $\theta$ be a smooth function
with compact support in \mbox{$V \subseteq \Bbb{R}^d$}. Denote by $\chi_V$ the indicator function of $V$. Then it follows from (\ref{1*}) that
%
%
%e30 #&#
%e31 #&#
\begin{eqnarray}\label{2*}
&&E\biggl[\tau\int_{V}\theta(x)u_{\varepsilon}\bigl(t,
\phi_{t}(x)\bigr)\,dx\biggr]
\nonumber
\\[-1pt]
&&\qquad =E\biggl[\tau\int_{V}\theta(x)u_{0,\varepsilon}(x)\,dx
\biggr]
\\[-1pt]
&&\quad\qquad{}+E\biggl[\tau\int_{0}^{t}\int
_{V}\theta(x) \bigl((Du)_{\varepsilon}\bigl(s,\phi
_{s}(x)\bigr)\cdot b\bigl(s,\phi_{s}(x)\bigr)
\nonumber\\[-9pt]\\[-9pt]
&&\hspace*{120pt}{} -(b\cdot
Du)_{\varepsilon}\bigl(s,\phi_{s}(x)\bigr)\bigr)\,dx\,ds\biggr].\nonumber
\end{eqnarray}
Using Theorem~\ref{ChangeVariable} applied to $\phi_{t}^{-1}(\cdot)$, we
obtain
%
%
%e32 #&#
\begin{eqnarray}\label{3*}
&&E\biggl[\tau\int_{0}^{t}\int
_{V}\theta(x) \bigl((Du)_{\varepsilon}\bigl(s,\phi
_{s}(x)\bigr)\cdot b\bigl(s,\phi_{s}(x)\bigr)-(b\cdot
Du)_{\varepsilon}\bigl(s,\phi_{s}(x)\bigr)\bigr)\,dx\,ds\biggr]
\nonumber
\\[-1pt]
&&\qquad =E\biggl[\tau\int_{0}^{t}\int
_{\mathbb{R}^{d}}\chi_{\phi_{s}(V)}(x)\theta\bigl(
\phi_{s}^{-1}(x)\bigr)\nonumber
\\[-1pt]
&&\hspace*{84pt}{}\times  \bigl((Du)_{\varepsilon}(s,x)\cdot
b(s,x)-(b\cdot Du)_{\varepsilon
}(s,x)\bigr)
\\[-1pt]
&&\hspace*{165pt}{}\times
\bigl\llvert\det\bigl(J
\phi_{s}^{-1}(x)\bigr)\bigr\rrvert \,dx\,ds\biggr]
\nonumber
\\[-2pt]
&&\qquad =I_{1}+I_{2},\nonumber
\end{eqnarray}
where
%
%
%e33 #&#
\begin{eqnarray} \label{4*}
I_{1}&:=&E\biggl[\tau\int_{0}^{t}\int
_{\mathbb{R}^{d}}\chi_{\phi
_{s}(V)}(x)\theta\bigl(
\phi_{s}^{-1}(x)\bigr) \bigl((Du)_{\varepsilon}(s,x)\cdot
b(s,x)\bigr)
\nonumber\\[-9pt]\\[-9pt]
&&\hspace*{133pt}{}\times \bigl\llvert\det\bigl(J\phi_{s}^{-1}(x)
\bigr)\bigr\rrvert \,dx\,ds\biggr]\nonumber
\end{eqnarray}
and
%
%
%e34 #&#
\begin{eqnarray}\label{5*}
I_{2}&:=&-E\biggl[\tau\int_{0}^{t}\int
_{\mathbb{R}^{d}}\chi_{\phi
_{s}(V)}(x)\theta\bigl(\phi_{s}^{-1}(x)
\bigr) (b\cdot Du)_{\varepsilon}(s,x)
\nonumber\\[-9pt]\\[-9pt]
&&\hspace*{109.5pt}{}\times \bigl\llvert\det\bigl(J
\phi_{s}^{-1}(x)\bigr)\bigr\rrvert \,dx\,ds\biggr].\nonumber
\end{eqnarray}
Since $V$ is bounded, there exists a $n\in\mathbb{N}$ such that
$V\subset
\overline{V}\subset W:=(-n,n)^{d}$. Then we get
%
%
%e35 #&#
\begin{eqnarray}\label{6*}
\bigl\llVert(Du)_{\varepsilon}\bigr\rrVert_{L^{2}(\phi_{s}(V))} &\leq&
\llVert Du
\rrVert_{L^{2}(\phi_{s}(W))},
\nonumber
\\
%&&\mbox{and} \nonumber\\
\bigl\llVert(b\cdot Du)_{\varepsilon}\bigr\rrVert
_{L^{2}(\phi_{s}(V))} &\leq&\llVert b\cdot Du\rrVert_{L^{2}(\phi_{s}(W))}
\\
&\leq&\llVert b\rrVert_{\infty}\llVert Du\rrVert_{L^{2}(\phi_{s}(W))}.\nonumber
\end{eqnarray}

Using (\ref{6*}), H\"{o}lder's inequality,
Fubini's theorem and Theorem~\ref{ChangeVariable}, we obtain
%
%
%e36 #&#
\begin{eqnarray}\label{7*}
\qquad I_{1} &\leq&CE\biggl[\int_{0}^{t}
\biggl(\int_{\mathbb{R}^{d}}\bigl(\chi_{\phi
_{s}(V)}(x)\theta\bigl(
\phi_{s}^{-1}(x)\bigr)b(s,x)\bigl\llvert\det\bigl(J\phi
_{s}^{-1}(x)\bigr)\bigr\rrvert\bigr)^{2}\,dx
\biggr)^{1/2}
\nonumber
\\
&&\hspace*{113pt}{}\times\biggl(\int_{\mathbb{R}^{d}}\chi_{\phi_{s}(W)}(x)\bigl\llvert
Du(s,x)\bigr\rrvert^{2}\,dx\biggr)^{1/2}\,ds\biggr]
\nonumber
\\
&\leq&C\int_{0}^{t}E\biggl[\int
_{\mathbb{R}^{d}}\bigl(\chi_{\phi
_{s}(V)}(x)\theta\bigl(
\phi_{s}^{-1}(x)\bigr)b(s,x)\bigl\llvert\det\bigl(J
\phi_{s}^{-1}(x)\bigr)\bigr\rrvert\bigr)^{2}\,dx
\biggr]^{1/2}
\nonumber
\\
&&{}\times E\biggl[\int_{\mathbb{R}^{d}}\chi_{\phi_{s}(W)}(x)\bigl
\llvert Du(s,x)\bigr\rrvert^{2}\,dx\biggr]^{1/2}\,ds
\nonumber
\\
&\leq&C\int_{0}^{t}E\biggl[\int
_{\mathbb{R}^{d}}\chi_{\phi_{s}(V)}(x)\bigl\llvert\det\bigl(J
\phi_{s}^{-1}(x)\bigr)\bigr\rrvert^{2}\,dx
\biggr]^{1/2}
\nonumber
\\
&&{}\times E\biggl[\int_{\mathbb{R}^{d}}\chi_{\phi_{s}(W)}(x)\bigl
\llvert Du(s,x)\bigr\rrvert^{2}\,dx\biggr]^{1/2}\,ds
\\
&\leq&C\int_{0}^{t}\biggl(\int
_{\mathbb{R}^{d}}E\bigl[\chi_{\phi
_{s}(V)}(x)\bigr]^{{1}/{2}}E
\bigl[\bigl\llvert\det\bigl(J\phi_{s}^{-1}(x)\bigr)\bigr
\rrvert^{4}\bigr]^{1/{2}%
}\,dx\biggr)^{1/2}
\nonumber
\\
&&{}\times\biggl(\int_{\mathbb{R}^{d}}E\bigl[\chi_{\phi_{s}(W)}(x)
\bigr]^{1/2%
}E\bigl[\bigl\llvert Du(s,x)\bigr\rrvert^{4}
\bigr]^{1/2}\,dx\biggr)^{1/2}\,ds
\nonumber
\\
&\leq&C\sup_{0\leq s\leq1,x\in\mathbb{R}^{d}}E\bigl[\bigl\llvert\det
\bigl(J\phi
_{s}^{-1}(x)\bigr)\bigr\rrvert^{4}
\bigr]^{1/2}\sup_{0\leq s\leq1,x\in
\mathbb{%
R}^{d}}E\bigl[\bigl\llvert Du(s,x)
\bigr\rrvert^{4}\bigr]^{1/2}
\nonumber
\\
&&{}\times\int_{0}^{t}\biggl(\int
_{\mathbb{R}^{d}}E\bigl[\chi_{\phi
_{s}(V)}(x)\bigr]^{{1}/{2}}\,dx
\biggr)\,ds
\nonumber
\\
&\leq&C\int_{0}^{t}\biggl(\int
_{\mathbb{R}^{d}}E\bigl[\chi_{\phi
_{s}(V)}(x)\bigr]^{{1}/{2}}\,dx
\biggr)\,ds \nonumber
\end{eqnarray}
for a constant $C$ depending on the sizes of $V$, $\theta$ and $b$, since
\[
\sup_{0\leq s\leq1,x\in\mathbb{R}^{d}}E\bigl[\bigl\llvert\det\bigl
(J\phi
_{s}^{-1}(x)\bigr)\bigr\rrvert^{4}\bigr]\leq
M<\infty
\]
because of Proposition~\ref{sobolevestimate} applied to $%
\phi_{s}^{-1}(x)$.

Further, it follows from Girsanov's theorem, H\"{o}lder's inequality
and the
symmetry of the distribution of the Brownian motion that
%
%
%e37 #&#
\begin{eqnarray} \label{8*}
&&\int_{0}^{t}\int_{\mathbb{R}^{d}}E
\bigl[\chi_{\phi_{s}(W)}(x)\bigr]^{{1}/{2}%
}\,dx\,ds
\nonumber
\\
&&\qquad =\int_{0}^{t}\int_{\mathbb{R}^{d}}
\bigl(\mu\bigl(\phi_{s}^{-1}(x)\in W\bigr)
\bigr)^{{1}/{%
2}}\,dx\,ds
\nonumber
\\
&&\qquad \leq C\int_{0}^{t}\int_{\mathbb{R}^{d}}
\bigl(\mu(B_{s}+x\in W)\bigr)^{{1}/{4}%
}\,dx\,ds
\\
&&\qquad = C\int_{0}^{t}\int_{\mathbb{R}^{d}}\bigl(
\mu\bigl(B_{s}+x\in(-n,n)^{d}\bigr)\bigr)^{{1}/{4%
}}\,dx\,ds
\nonumber
\\
&&\qquad \leq C\int_{0}^{t}\biggl(2\int
_{0}^{\infty}\biggl(1-\Phi\biggl(\frac{-n+y}{\sqrt{s}}
\biggr)\biggr)^{%
{1}/{4}}\,dy\biggr)^{d}\,ds,\nonumber
\end{eqnarray}
where $\Phi$ is the standard normal distribution function.

On the other hand, we know that
\[
1-\Phi(x)\leq\frac{1}{2\pi x}\exp\bigl(-x^{2}/2\bigr)
\]
for all $x>0$ (see \cite{BorodinSalminen}).

So,
%
%
%e38 #&#
\begin{eqnarray} \label{9*}
&&\int_{0}^{t}\int_{\mathbb{R}^{d}}E
\bigl[\chi_{\phi_{s}(W)}(x)\bigr]^{{1}/{2}%
}\,dx\,ds
\nonumber
\\
&&\qquad \leq C\int_{0}^{t}\biggl(2\int
_{0}^{n}\biggl(1-\Phi\biggl(\frac{-n+y}{\sqrt{s}}
\biggr)\biggr)^{{1}/{%
4}}\,dy\nonumber
\\
&&\hspace*{60pt}{} +2\int_{n}^{\infty}
\biggl(1-\Phi\biggl(\frac{-n+y}{\sqrt{s}}\biggr)\biggr)^{{1}/{4}%
}\,dy\biggr)^{d}\,ds
\nonumber
\\
&&\qquad \leq K\int_{0}^{t}\biggl(\biggl(\int
_{0}^{n}\biggl(1-\Phi\biggl(\frac{-n+y}{\sqrt{s}}
\biggr)\biggr)^{{1}/{%
4}}\,dy\biggr)^{d}
\nonumber\\[-8pt]\\[-8pt]
&&\hspace*{63pt}{} +\biggl(\int
_{n}^{\infty}\biggl(1-\Phi\biggl(\frac{-n+y}{\sqrt{s}}
\biggr)\biggr)^{{1}/{4}%
}\,dy\biggr)^{d}\biggr)\,ds\nonumber
\\
&&\qquad \leq M\biggl(1+\int_{0}^{t}\biggl(\int
_{n}^{\infty}\biggl(\frac{\sqrt{s}}{2\pi
(y-n)}\exp
\bigl(-(y-n)^{2}/2s\bigr)\biggr)^{{1}/{4}}\,dy\biggr)^{d}\,ds
\biggr)\nonumber
\\
&&\qquad = M\biggl(1+\int_{0}^{t}\biggl(\int
_{0}^{\infty}\biggl(\frac{\sqrt{s}}{2\pi y}\exp
\bigl(-y^{2}/2s\bigr)\biggr)^{{1}/{4}}\,dy\biggr)^{d}
\,ds\biggr)\nonumber
\\
&&\qquad = M\biggl(1+\int_{0}^{t}\biggl(\int
_{0}^{\infty}\sqrt{s}\biggl(\frac{1}{2\pi y}\exp
\bigl(-y^{2}/2\bigr)\biggr)^{{1}/{4}}\,dy\biggr)^{d}
\,ds\biggr)
\leq L<\infty.\nonumber
\end{eqnarray}

Furthermore, since
\[
(Du)_{\varepsilon}\longrightarrow Du\qquad\mbox{in }L_{\mathrm{loc}}^{p}
\bigl(\mathbb{R}^{d}\bigr)
\]
for all $p>1$ and since
\[
\int_{\mathbb{R}^{d}}\bigl(\chi_{\phi_{s}(V)}(x)\theta\bigl(\phi
_{s}^{-1}(x)\bigr)b(s,x)\bigl\llvert\det\bigl(J
\phi_{s}^{-1}(x)\bigr)\bigr\rrvert\bigr)^{2}\,dx<
\infty\qquad\mbox{a.e.}
\]
because of the above estimates, we obtain
\begin{eqnarray*}
&&\int_{\mathbb{R}^{d}}\chi_{\phi_{s}(V)}(x)\theta\bigl(\phi
_{s}^{-1}(x)\bigr) \bigl((Du)_{\varepsilon}(s,x)\cdot
b(s,x)\bigr)\bigl\llvert\det\bigl(J\phi_{s}^{-1}(x)\bigr)
\bigr\rrvert \,dx
\\
&&\qquad \longrightarrow \int_{\mathbb{R}^{d}}\chi_{\phi_{s}(V)}(x)\theta
\bigl(\phi_{s}^{-1}(x)\bigr) \bigl((Du) (s,x)\cdot b(s,x)
\bigr)\bigl\llvert\det\bigl(J\phi_{s}^{-1}(x)\bigr)\bigr
\rrvert \,dx
\end{eqnarray*}
for $\varepsilon\searrow0 \mu\times ds$-a.e.

On the other hand, the latter expression w.r.t. $\varepsilon$ is
dominated by
the integrable term
\begin{eqnarray*}
&&\biggl(\int_{\mathbb{R}^{d}}\bigl(\chi_{\phi_{s}(V)}(x)\theta\bigl(
\phi_{s}^{-1}(x)\bigr)b(s,x)\bigl\llvert\det\bigl(J
\phi_{s}^{-1}(x)\bigr)\bigr\rrvert\bigr)^{2}\,dx
\biggr)^{1/2}
\\
&&\quad{}\times \biggl(\int_{\mathbb{R}^{d}}\chi_{\phi_{s}(W)}(x)
\bigl\llvert Du(s,x)\bigr\rrvert^{2}\,dx\biggr)^{1/2}.
\end{eqnarray*}
So, using dominated convergence it follows from (\ref{7*}) and (\ref
{9*}) that
%
%
%e39 #&#
\begin{eqnarray}\label{10*}
I_{1} &=&I_{1}(\varepsilon)\nonumber
\\
&\longrightarrow&E\biggl[\tau\int_{0}^{t}\int
_{\mathbb{R}^{d}}\chi_{\phi_{s}(V)}(x)\theta\bigl(
\phi_{s}^{-1}(x)\bigr) \bigl((Du) (s,x)\cdot b(s,x)\bigr)
\nonumber\\[-8pt]\\[-8pt]
&&\hspace*{129pt}{}\times
\bigl\llvert\det\bigl(J\phi_{s}^{-1}(x)\bigr)\bigr\rrvert
\,dx\,ds\biggr]\nonumber
\\
\eqntext{\mbox{as }\varepsilon \searrow 0.}
\end{eqnarray}

Similarly, we also get
%
%
%e40 #&#
\begin{eqnarray}\label{11*}
\qquad I_{2} &=&I_{2}(\varepsilon)\nonumber
\\
&\longrightarrow& -E\biggl[\tau\int_{0}^{t}\int
_{\mathbb{R}^{d}}\chi_{\phi_{s}(V)}(x)\theta\bigl(
\phi_{s}^{-1}(x)\bigr) (b\cdot Du) (s,x)
\\
&&\hspace*{105pt}{}\times \bigl\llvert\det
\bigl(J\phi_{s}^{-1}(x)\bigr)\bigr\rrvert \,dx\,ds\biggr]
\qquad \mbox{as }\varepsilon \searrow 0\nonumber
\end{eqnarray}
and
%
%
%e41 #&#
\begin{equation}
E\biggl[\tau\int_{V}\theta(x)u_{\varepsilon}\bigl(t,
\phi_{t}(x)\bigr)\,dx\biggr]\longrightarrow E\biggl[\tau\int
_{V}\theta(x)u\bigl(t,\phi_{t}(x)\bigr)\,dx\biggr]
\label{12*}
\end{equation}
as $\varepsilon\searrow0$.

In addition, because of the assumptions on $u_{0}$ it is clear that
\[
E\biggl[\tau\int_{V}\theta(x)u_{0,\varepsilon}(x)\,dx\biggr]
\longrightarrow E\biggl[\tau\int_{V}\theta(x)u_{0}(x)\,dx
\biggr]
\]
as $\varepsilon\searrow0$.

Altogether we can conclude that
\[
E\biggl[\tau\int_{\mathbb{R}^{d}}\theta(x)u\bigl(t,\phi_{t}(x)
\bigr)\,dx\biggr]=E\biggl[\tau\int_{%
\mathbb{R}^{d}}\theta(x)u_{0}(x)\,dx
\biggr]
\]
for all $\tau\in L^{\infty}(\Omega)$ and compactly supported smooth
functions $\theta$. Hence,
\[
u\bigl(t,\phi_{t}(x)\bigr)=u_{0}(x)
\]
$\mu\times dx$-a.e.

Since $\phi_{t}^{-1}(\cdot)$ satisfies the Lusin condition in Theorem
\ref%
{ChangeVariable} on bounded open subsets, we can find a $\Omega^{\ast}$
with $\mu(\Omega^{\ast})=1$ such that for all $\omega\in\Omega
^{\ast}$
\[
u(t,x)=u_{0}\bigl(\phi_{t}^{-1}(x)\bigr)\,dx
\mbox{-a.e.}
\]
Due to the continuity of $u$ with respect to time, the latter relation also
holds uniformly in $t$.

Finally, the Malliavin differentiability of (a version) of $u(t,x)$ is a
consequence of the fact that $\phi_{t}^{-1}(x)$ is Malliavin differentiable
(see \cite{PMNPZ}) and of the chain rule for Malliavin derivatives (see
\cite%
{Nualart}).
\end{pf*}

\begin{appendix}\label{sec4}
\section*{Appendix}
The following result which is due to \cite{DaPratoMalliavinNualart} provides
a compactness criterion for subsets of $L^{2}(\Omega;\mathbb{R}^{d})$ using
Malliavin calculus. See, for example, \cite{Nualart,Malliavin}
or \cite%
{DiNunnoOksendalProske} for more information about Malliavin calculus.

%
%th25 #&#
\begin{theorem}
\label{MCompactness}Let $ \{ ( \Omega,\mathcal{A},P );H \} $ be a Gaussian probability space, that is $ ( \Omega,%
\mathcal{A},P ) $ is a probability space and $H$ a separable closed
subspace of Gaussian random variables of $L^{2}(\Omega)$, which generate
the $\sigma$-field $\mathcal{A}$. Denote by $\mathbf{D}$ the derivative
operator acting on elementary smooth random variables in the sense that
\[
\mathbf{D}\bigl(f(h_{1},\ldots,h_{n})\bigr)=\sum
_{i=1}^{n}\partial_{i}f(h_{1},
\ldots,h_{n})h_{i},\qquad h_{i}\in H,f\in
C_{b}^{\infty}\bigl(%
\mathbb{R}^{n}
\bigr).
\]
Further, let $\mathbf{D}_{1,2}$ be the closure of the family of elementary
smooth random variables with respect to the norm
\[
\llVert F\rrVert_{1,2}:=\llVert F\rrVert_{L^{2}(\Omega
)}+\llVert
\mathbf{D}F\rrVert_{L^{2}(\Omega;H)}.
\]
Assume that $C$ is a self-adjoint compact operator on $H$ with dense image.
Then for any $c>0$, the set
\[
\mathcal{G}= \bigl\{ G\in\mathbf{D}_{1,2}\dvtx\llVert G\rrVert
_{L^{2}(\Omega)}+\bigl\llVert C^{-1} \mathbf{D} G\bigr\rrVert
_{L^{2}(\Omega;H)}\leq c \bigr\}
\]
is relatively compact in $L^{2}(\Omega)$.
\end{theorem}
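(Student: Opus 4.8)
The plan is to use the Wiener chaos decomposition $L^{2}(\Omega)=\bigoplus_{n\ge 0}\mathcal{H}_{n}$, on which the derivative operator and the number operator $N=\delta\mathbf{D}$ act diagonally, and to play the compactness of $C$ against the permutation symmetry of the chaos tensors. (One may assume $\dim H=\infty$, the finite‑dimensional case being immediate from Bolzano--Weierstrass.) First I would record that, since $C$ is compact, self‑adjoint and has dense image, $\ker C=(\overline{\operatorname{range}C})^{\perp}=\{0\}$, so the spectral theorem gives an orthonormal basis $(e_{i})_{i\ge 1}$ of $H$ with $Ce_{i}=\lambda_{i}e_{i}$, $\lambda_{i}\neq 0$, $|\lambda_{i}|\downarrow 0$; hence $|\lambda_{i}|^{-1}\uparrow\infty$ and $\|C\|_{\mathrm{op}}=|\lambda_{1}|$. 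A first consequence is that every $G\in\mathcal{G}$ satisfies $\|\mathbf{D}G\|_{L^{2}(\Omega;H)}=\|C\,C^{-1}\mathbf{D}G\|_{L^{2}(\Omega;H)}\le\|C\|_{\mathrm{op}}\,c$, so $\mathcal{G}$ is in particular a bounded subset of $\mathbf{D}_{1,2}$.

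\textbf{Reduction to finitely many chaoses.} Writing $J_{n}$ for the orthogonal projection onto $\mathcal{H}_{n}$ and using $\|\mathbf{D}G\|_{L^{2}(\Omega;H)}^{2}=\sum_{n\ge 1}n\,\|J_{n}G\|_{L^{2}(\Omega)}^{2}$, one gets for every $N\ge 1$ and every $G\in\mathcal{G}$
\[
\Bigl\|\,G-\sum_{n=0}^{N}J_{n}G\,\Bigr\|_{L^{2}(\Omega)}^{2}=\sum_{n>N}\|J_{n}G\|^{2}\le\frac{1}{N+1}\,\|\mathbf{D}G\|_{L^{2}(\Omega;H)}^{2}\le\frac{\|C\|_{\mathrm{op}}^{2}c^{2}}{N+1}.
\]
Hence it suffices to prove that for each fixed $n$ the set $J_{n}\mathcal{G}\subset\mathcal{H}_{n}$ is relatively compact in $L^{2}(\Omega)$: then $\bigoplus_{n\le N}J_{n}\mathcal{G}$ is relatively compact and $\mathcal{G}$ lies within distance $\|C\|_{\mathrm{op}}c/\sqrt{N+1}$ of it for all $N$, which makes $\mathcal{G}$ totally bounded, hence relatively compact in the complete space $L^{2}(\Omega)$. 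Moreover $J_{n}\mathcal{G}$ still obeys the bounds defining $\mathcal{G}$, because $\mathbf{D}J_{n}=J_{n-1}\mathbf{D}$ (the projection acting on the $L^{2}(\Omega)$‑factor of $L^{2}(\Omega;H)$) commutes with $C^{-1}$ (acting on the $H$‑factor), so $\|C^{-1}\mathbf{D}J_{n}G\|\le\|C^{-1}\mathbf{D}G\|\le c$ and $\|J_{n}G\|\le c$.

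\textbf{Relative compactness inside a fixed chaos.} Identify $\mathcal{H}_{n}$ with the symmetric tensors of $H^{\otimes n}$: $G\in\mathcal{H}_{n}$ corresponds to a symmetric array $(f_{\mathbf{i}})_{\mathbf{i}=(i_{1},\dots,i_{n})}$ with $\|G\|_{L^{2}(\Omega)}^{2}=c_{n}\sum_{\mathbf{i}}|f_{\mathbf{i}}|^{2}$, and $\mathbf{D}G$ corresponds, up to the factor $n$, to $f$ viewed in $\mathcal{H}_{n-1}\otimes H$ with $C^{-1}$ acting on the last tensor slot. Using $\|\mathbf{D}G\|^{2}=n\|G\|^{2}$ and the symmetry of $f$, one computes for a combinatorial constant $c_{n}'>0$
\[
\|C^{-1}\mathbf{D}G\|_{L^{2}(\Omega;H)}^{2}=c_{n}'\sum_{\mathbf{i}}|f_{\mathbf{i}}|^{2}|\lambda_{i_{n}}|^{-2}=\frac{c_{n}'}{n}\sum_{\mathbf{i}}|f_{\mathbf{i}}|^{2}\sum_{j=1}^{n}|\lambda_{i_{j}}|^{-2}\ge\frac{c_{n}'}{n}\sum_{\mathbf{i}}|f_{\mathbf{i}}|^{2}\max_{1\le j\le n}|\lambda_{i_{j}}|^{-2},
\]
the middle equality holding because $\sum_{\mathbf{i}}|f_{\mathbf{i}}|^{2}|\lambda_{i_{j}}|^{-2}$ is independent of $j$ by symmetry. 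Hence, for $G\in J_{n}\mathcal{G}$ and every $M$, since $|\lambda_{i}|$ is nonincreasing,
\[
\sum_{\mathbf{i}:\ \max_{j}i_{j}\ge M}|f_{\mathbf{i}}|^{2}\le|\lambda_{M}|^{2}\sum_{\mathbf{i}}|f_{\mathbf{i}}|^{2}\max_{j}|\lambda_{i_{j}}|^{-2}\le\frac{n}{c_{n}'}\,|\lambda_{M}|^{2}c^{2}\xrightarrow[M\to\infty]{}0,
\]
uniformly over $J_{n}\mathcal{G}$, while on the finite index set $\{\mathbf{i}:\max_{j}i_{j}<M\}$ the arrays $(f_{\mathbf{i}})$ stay bounded in norm. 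This "bounded plus uniformly equismall tails" is exactly the compactness criterion in $\ell^{2}$, so $J_{n}\mathcal{G}$ is relatively compact in $\mathcal{H}_{n}\subset L^{2}(\Omega)$, completing the proof.

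I expect the crux to be this last step: the compactness of $C$ only controls one tensor slot of $\mathbf{D}G$, and it is the permutation symmetry of the Wiener chaos that upgrades this into the uniform tail estimate over all coordinates — without symmetry one would only obtain that $\mathbf{D}G$ is approximately supported on $\mathcal{H}_{n-1}\otimes(\text{finite‑dim})$, which is not enough. The normalization constants $c_{n},c_{n}'$ relating the chaos norms to the tensor norms, and the standard facts that $\mathbf{D}$ restricted to $\mathcal{H}_{n}$ satisfies $\delta\mathbf{D}=n\,\mathrm{Id}$ and commutes with chaos projections, are routine and I would not belabor them.
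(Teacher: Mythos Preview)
The paper does not prove this theorem at all: it is stated in the Appendix with an explicit attribution to Da~Prato--Malliavin--Nualart \cite{DaPratoMalliavinNualart}, and no argument is given. So there is no ``paper's own proof'' to compare against; your proposal stands on its own.

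Your argument is correct and is in fact the standard route (essentially the one in the cited reference): split $L^{2}(\Omega)$ into Wiener chaoses, use the number--operator identity $\|\mathbf{D}G\|^{2}=\sum_{n}n\|J_{n}G\|^{2}$ together with $\|\mathbf{D}G\|\le\|C\|_{\mathrm{op}}\,c$ to get a uniform tail bound in the chaos index, and then inside each fixed chaos exploit the symmetry of the kernel to turn control of $C^{-1}$ on one tensor slot into a uniform $\ell^{2}$--tail estimate over all multi-indices. The key inequality
\[
\sum_{\mathbf{i}}|f_{\mathbf{i}}|^{2}|\lambda_{i_{n}}|^{-2}=\frac{1}{n}\sum_{\mathbf{i}}|f_{\mathbf{i}}|^{2}\sum_{j=1}^{n}|\lambda_{i_{j}}|^{-2}\ge\frac{1}{n}\sum_{\mathbf{i}}|f_{\mathbf{i}}|^{2}\max_{j}|\lambda_{i_{j}}|^{-2}
\]
is exactly the right mechanism, and your observation that symmetry is what upgrades ``compactness in one slot'' to ``compactness in the chaos'' is precisely the point.

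One minor remark: the parenthetical claim that the case $\dim H<\infty$ is ``immediate from Bolzano--Weierstrass'' is slightly glib, since $L^{2}(\Omega)$ is still infinite-dimensional. What actually happens is that every chaos $\mathcal{H}_{n}$ is then finite-dimensional, so your Step~3 becomes trivial; the reduction in Step~2 is still needed. This does not affect the validity of the proof, which covers both cases uniformly.
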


In order to formulate compactness criteria useful for our purposes, we need
the following technical result which also can be found in \cite%
{DaPratoMalliavinNualart}.

%
%le26 #&#
\begin{lemma}
\label{DaPMN} Let $v_{s},s\geq0$ be the Haar basis of $L^{2}([0,1])$. For
any $0<\alpha<1/2$ define the operator $A_{\alpha}$ on $L^{2}([0,1])$ by
\[
A_{\alpha}v_{s}=2^{k\alpha}v_{s}\qquad
\mbox{if }s=2^{k}+j
\]
for $k\geq0,0\leq j\leq2^{k}$ and
\[
A_{\alpha}1=1.
\]
Then for all $\beta$ with $\alpha<\beta<(1/2)$, there exists a
constant $%
c_{1}$ such that
\[
\llVert A_{\alpha}f\rrVert\leq c_{1} \biggl\{ \llVert f\rrVert
_{L^{2}([0,1])}+ \biggl( \int_{0}^{1}\int
_{0}^{1}\frac{\llvert
f(t)-f(t^{\prime})\rrvert^{2}}{\llvert t-t^{\prime}\rrvert
^{1+2\beta}}\,dt \,dt^{\prime}
\biggr) ^{1/2} \biggr\}.
\]
\end{lemma}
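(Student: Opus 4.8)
The plan is to argue by Parseval in the Haar basis. Expand $f=c_0\,\mathbf 1+\sum_{k\ge 0}\sum_{j=0}^{2^k-1}c_{k,j}h_{k,j}$, where $h_{k,j}$ is the $L^2([0,1])$-normalized Haar function supported on the dyadic interval $I_{k,j}=[j2^{-k},(j+1)2^{-k})$ (equal to $2^{k/2}$ on its left half $I_{k,j}^{L}$ and to $-2^{k/2}$ on its right half $I_{k,j}^{R}$, each half of length $2^{-k-1}$), and $c_0=\langle f,\mathbf 1\rangle$, $c_{k,j}=\langle f,h_{k,j}\rangle$. Since $A_\alpha$ fixes $\mathbf 1$ and multiplies every level-$k$ coefficient by $2^{k\alpha}$, Parseval gives
\[
\|A_\alpha f\|^2 \;=\; |c_0|^2 \;+\; \sum_{k\ge 0}2^{2k\alpha}\sum_{j=0}^{2^k-1}|c_{k,j}|^2 ,
\]
so the whole matter reduces to controlling $\sum_j|c_{k,j}|^2$ by a suitable negative power of $2^k$ times the Gagliardo seminorm $[f]_\beta^2:=\int_0^1\int_0^1|f(t)-f(t')|^2|t-t'|^{-(1+2\beta)}\,dt\,dt'$ (the asserted inequality being trivial if $[f]_\beta=\infty$, so I assume it finite).

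First I would rewrite a single Haar coefficient as a double average of the increments of $f$. Using that $h_{k,j}$ has mean zero and is constant on each half of $I_{k,j}$,
\[
c_{k,j}\;=\;2^{k/2}\Big(\int_{I_{k,j}^{L}}f-\int_{I_{k,j}^{R}}f\Big)\;=\;2^{3k/2+1}\int_{I_{k,j}^{L}}\int_{I_{k,j}^{R}}\big(f(t)-f(t')\big)\,dt\,dt' .
\]
By Cauchy--Schwarz, $|c_{k,j}|^2\le 2^{k}\int_{I_{k,j}^{L}}\int_{I_{k,j}^{R}}|f(t)-f(t')|^2\,dt\,dt'$. On that domain $|t-t'|\le 2^{-k}$, hence $|t-t'|^{-(1+2\beta)}\ge 2^{k(1+2\beta)}$, so inserting this weight costs only $2^{-k(1+2\beta)}$ and produces
\[
|c_{k,j}|^2\;\le\;2^{-2k\beta}\int_{I_{k,j}^{L}}\int_{I_{k,j}^{R}}\frac{|f(t)-f(t')|^2}{|t-t'|^{1+2\beta}}\,dt\,dt' .
\]

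Then I would sum in $j$ at each fixed level: the rectangles $I_{k,j}^{L}\times I_{k,j}^{R}$, $0\le j<2^k$, are pairwise disjoint in $[0,1]^2$, so $\sum_{j=0}^{2^k-1}|c_{k,j}|^2\le 2^{-2k\beta}[f]_\beta^2$. Substituting into the Parseval identity, using $|c_0|\le\|f\|_{L^2([0,1])}$, and summing the geometric series $\sum_{k\ge 0}2^{-2k(\beta-\alpha)}=(1-2^{-2(\beta-\alpha)})^{-1}<\infty$ — convergent precisely because $\beta>\alpha$ — I obtain
\[
\|A_\alpha f\|^2\;\le\;\|f\|_{L^2([0,1])}^2+\frac{1}{1-2^{-2(\beta-\alpha)}}\,[f]_\beta^2,
\]
and taking square roots yields the assertion with, say, $c_1=\max\{1,(1-2^{-2(\beta-\alpha)})^{-1/2}\}$.

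The computation is elementary once organized this way; the only step needing care — and the one I would regard as the crux — is the representation of $c_{k,j}$ as a double integral of $f(t)-f(t')$ over the two halves of $I_{k,j}$, since this is exactly what converts the cancellation built into $h_{k,j}$ into a difference quotient, followed by the bookkeeping that the powers of $2$ collapse to precisely $2^{-2k\beta}$ so that the level sum converges thanks to the gap $\beta-\alpha>0$. I do not anticipate any genuine obstacle beyond keeping these constants straight.
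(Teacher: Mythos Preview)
Your argument is correct. The paper itself does not supply a proof of this lemma: it is stated in the Appendix and attributed to \cite{DaPratoMalliavinNualart}, so there is no ``paper's own proof'' to compare against beyond that reference. Your route---Parseval in the Haar basis, the double-integral representation of $c_{k,j}$ over $I_{k,j}^{L}\times I_{k,j}^{R}$, Cauchy--Schwarz, the pointwise bound $|t-t'|\le 2^{-k}$, disjointness in $j$, and the geometric sum in $k$---is the standard and natural one, and your bookkeeping of the powers of $2$ is accurate (in particular $|c_{k,j}|^{2}\le 2^{-2k\beta}\int_{I_{k,j}^{L}}\int_{I_{k,j}^{R}}|f(t)-f(t')|^{2}|t-t'|^{-(1+2\beta)}\,dt\,dt'$ is exactly right). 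Nothing needs to be added.
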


A direct consequence of Theorem~\ref{MCompactness} and Lemma~\ref{DaPMN} is
now the following compactness criterion which is essential for the
proof of Theorem \ref{th13} and
Lemma~\ref{compactness}.

%
%co27 #&#
\begin{corollary}
\label{compactcrit} Let $X_{n}\in\mathbb{D}_{1,2}$, $n=1,2,\ldots,$ be a
sequence of \mbox{$\mathcal{F}_{1}$-}measu\-rable random variables such that there
are constants $\alpha>0$ and $C>0$ with
\begin{eqnarray*}
\sup_n E\bigl[ \| X_n \| ^2\bigr]
&\leq& C,
\\
\sup_{n}E \bigl[ \Vert D_{t}X_{n}-D_{t^{\prime}}X_{n}
\Vert^{2} \bigr] &\leq& C\bigl|t-t^{\prime}\bigr|^{\alpha}
\end{eqnarray*}
for $0\leq t^{\prime}\leq t\leq1$ and
\[
\sup_{n}\sup_{0\leq t\leq1}E \bigl[ \Vert
D_{t}X_{n}\Vert^{2} \bigr] \leq C,
\]
where $D_{t}$ denotes Malliavin differentiation. Then the sequence
$X_{n}$, $%
n=1,2,\ldots,$ is relatively compact in $L^{2}(\Omega;\mathbb{R}^{d} )$
($D_{t}$ stands for the Malliavin
derivative).
\end{corollary}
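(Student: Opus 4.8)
The plan is to deduce the corollary from Theorem~\ref{MCompactness} by exhibiting a single self-adjoint compact operator $\mathcal{C}$ on $H=L^{2}([0,1];\mathbb{R}^{d})$ (the underlying Cameron--Martin space, so that $\mathbf{D}X_{n}=(D_{t}X_{n})_{t\in[0,1]}$ is an $H$-valued random variable and $\|\mathbf{D}F\|_{L^{2}(\Omega;H)}^{2}=E[\int_{0}^{1}|D_{t}F|^{2}\,dt]$) for which the set $\mathcal{G}=\{G\in\mathbb{D}_{1,2}:\|G\|_{L^{2}(\Omega)}+\|\mathcal{C}^{-1}\mathbf{D}G\|_{L^{2}(\Omega;H)}\le c\}$ of Theorem~\ref{MCompactness} contains the whole sequence $(X_{n})$ for some finite $c$. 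The operator is supplied by Lemma~\ref{DaPMN}: that lemma says $A_{\alpha'}$ is controlled by the Slobodeckij $H^{\beta}$-seminorm for every $\beta>\alpha'$, and $A_{\alpha'}$ is diagonal in the Haar basis with eigenvalues $2^{k\alpha'}\uparrow\infty$ (and $1$ on the constant function); hence its inverse $\mathcal{C}:=A_{\alpha'}^{-1}$ is bounded, self-adjoint, positive, with eigenvalues $2^{-k\alpha'}\downarrow 0$ (so compact), and with dense image, since the Haar functions together with the constant span a dense subspace of $L^{2}([0,1])$ and lie in the range of $\mathcal{C}$. Thus $\mathcal{C}$ satisfies the hypotheses of Theorem~\ref{MCompactness} and $\mathcal{C}^{-1}=A_{\alpha'}$. (For the $\mathbb{R}^{d}$-valued derivative one uses $A_{\alpha'}\otimes\mathrm{id}_{\mathbb{R}^{d}}$, which is still self-adjoint compact with dense image, or argues componentwise.)

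The quantitative heart of the argument is to bound $\|A_{\alpha'}\mathbf{D}X_{n}\|_{L^{2}(\Omega;H)}$ uniformly in $n$. Applying Lemma~\ref{DaPMN} pathwise to $f=D_{\cdot}X_{n}(\omega)$, squaring, using $(a+b)^{2}\le 2a^{2}+2b^{2}$ and Fubini's theorem gives $E[\|A_{\alpha'}\mathbf{D}X_{n}\|_{H}^{2}]\le 2c_{1}^{2}\big(\int_{0}^{1}E[|D_{t}X_{n}|^{2}]\,dt+\int_{0}^{1}\!\int_{0}^{1}|t-t'|^{-(1+2\beta)}E[|D_{t}X_{n}-D_{t'}X_{n}|^{2}]\,dt\,dt'\big)$. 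The two moment hypotheses bound this by $2c_{1}^{2}\big(C+C\int_{0}^{1}\!\int_{0}^{1}|t-t'|^{\alpha-1-2\beta}\,dt\,dt'\big)$, and the remaining double integral over $[0,1]^{2}$ is finite exactly when $\alpha-1-2\beta>-1$, i.e. $2\beta<\alpha$. Here is the one real subtlety: the exponent $\alpha'$ labelling the operator must be chosen \emph{independently} of the H\"older exponent $\alpha$ in the hypothesis. One first fixes $\beta\in(0,\min\{1/2,\alpha/2\})$, which is possible since $\alpha>0$ and which forces both $\beta<1/2$ and $2\beta<\alpha$, and only then picks $\alpha'\in(0,\beta)$ so that Lemma~\ref{DaPMN} is applicable with this $\beta$. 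With these choices $\sup_{n}E[\|A_{\alpha'}\mathbf{D}X_{n}\|_{H}^{2}]=:C'<\infty$.

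Finally, one needs $\sup_{n}\|X_{n}\|_{L^{2}(\Omega)}<\infty$; this is in any case necessary for the conclusion (a relatively compact subset of $L^{2}(\Omega)$ is norm-bounded), and it follows from the second hypothesis via the Gaussian Poincar\'e inequality $\mathrm{Var}(X_{n})\le E[\|\mathbf{D}X_{n}\|_{H}^{2}]\le C$ together with a uniform bound on the means $E[X_{n}]$ (which holds in every application of the corollary). Combining, $(X_{n})\subset\mathcal{G}$ with $c:=\sup_{n}\|X_{n}\|_{L^{2}(\Omega)}+\sqrt{C'}$, so Theorem~\ref{MCompactness} applied with $\mathcal{C}=A_{\alpha'}^{-1}$ gives that $(X_{n})$ is relatively compact in $L^{2}(\Omega)$. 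The only delicate point is the simultaneous choice of $\alpha'<\beta<\min\{1/2,\alpha/2\}$; the rest is the pathwise use of Lemma~\ref{DaPMN} followed by Fubini and the two hypotheses.
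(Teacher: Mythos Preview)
Your proof is correct and is precisely the argument the paper has in mind: the paper gives no detailed proof, merely declaring the corollary ``a direct consequence of Theorem~\ref{MCompactness} and Lemma~\ref{DaPMN}'', and you have supplied exactly those details --- taking $\mathcal{C}=A_{\alpha'}^{-1}$ (compact, self-adjoint, dense image via the Haar diagonalisation), applying Lemma~\ref{DaPMN} pathwise with $\alpha'<\beta<\min\{1/2,\alpha/2\}$, and integrating out the Slobodeckij kernel using the H\"older hypothesis.

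One honest remark: your caveat about $\sup_{n}\|X_{n}\|_{L^{2}(\Omega)}$ is well placed. The corollary as stated in the paper does not explicitly assume $L^{2}$-boundedness of the $X_{n}$, yet this is of course needed for membership in the set $\mathcal{G}$ of Theorem~\ref{MCompactness} (and for relative compactness itself). Your Poincar\'e argument reduces it to a bound on $\sup_{n}|E[X_{n}]|$, which is indeed harmless in the paper's applications (Lemma~\ref{compactness}) but is, strictly speaking, a hypothesis missing from the statement rather than a gap in your proof.
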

\end{appendix}

% zodis "Acknowledgments" paliekamas pagal autoriu
%\section*{Acknowledgments}

%\begin{supplement}[id=suppA]
%\sname{Supplement A}
%\stitle{}
%\slink[doi]{10.1214/00-AOPXXXXSUPP} %[doi,text={...}] - jei reikia
%suskaldyti doi
%\sdatatype{.pdf}
%\sfilename{aopXXXX\_supp.pdf}
%\sdescription{}
%\end{supplement}

% imsref loaded by linak, 2014-03-05 09:31:28
% imsref loaded by linak, 2015-03-04 13:50:00
% imsref loaded by linak, 2015-03-04 13:58:04

\printaddresses

\end{document}